\newcommand{\qedhere}{}
\crefname{hypothesis}{Hypothesis}{Hypotheses}
\crefname{fact}{Fact}{Facts}
\title{Error formulas for block rational Krylov approximations of matrix functions\thanks{Both authors are members of INdAM/GNCS. \funding{
  % This work was funded by the Fog Research Institute under contract no.~FRI-454.}
  The authors have been partially supported by the 
  MIUR Excellence Department Project awarded to the Department of Mathematics, University of Pisa, CUP I57G22000700001.
  The work of Leonardo Robol was partially supported by the National Research Center in High Performance Computing, Big Data and Quantum Computing (CN1 -- Spoke 6), by the MIUR Excellence Department Project awarded to the Department of Mathematics, University of Pisa (CUP I57G22000700001), 
and by the Italian Ministry of University and Research (MUR) through the PRIN 2022 ``MOLE: Manifold constrained Optimization and LEarning'',  code: 2022ZK5ME7 MUR D.D. financing decree n. 20428 of November 6th, 2024 (CUP I53C24002260006). 
The work of Stefano Massei has been supported
by the PRIN 2022 Project ``Low-rank Structures and Numerical Methods in Matrix and Tensor Computations and their Application''. 
}}}
\author{Stefano Massei\thanks{Department of Mathematics, University of Pisa, L.go B. Pontecorvo, 5, 56127, Pisa, Italy. 
  (\email{stefano.massei@unipi.it}, \email{leonardo.robol@unipi.it}).}
\and Leonardo Robol\footnotemark[2]}
\newcommand{\res}{\mathrm{Res}}
\newcommand{\U}{\mathbf U}
\newcommand{\Z}{\mathbf Z}
\newcommand{\V}{\mathbf V}
\newcommand{\hbradu}{{\underline H}}
\newcommand{\kbradu}{{\underline K}}
\newcommand{\hbrad}{{H}}
\newcommand{\kbrad}{{K}}
\newcommand{\bspan}{\mathrm{blockspan}}
\newcommand{\norm}[1]{\lVert#1\rVert}
\newcommand{\upd}[1]{{#1}} % updated text for the revision
\begin{document}

\maketitle

% REQUIRED
\begin{abstract}
  This paper investigates explicit expressions for the error associated with the block rational Krylov approximation of matrix functions. Two formulas are proposed, both derived from  characterizations of the block FOM residual. The first formula employs a block generalization of the residual polynomial, while the second leverages the block collinearity of the residuals. A posteriori error bounds based on the knowledge 
  of spectral information of the argument are derived and tested on a set of examples. Notably, both error formulas and their corresponding upper bounds do not require  the use of quadratures for their practical evaluation.
\end{abstract}

% REQUIRED
\begin{keywords}
  Block Rational Krylov, Matrix functions, 
  Block characteristic polynomial.
\end{keywords}

% REQUIRED
\begin{MSCcodes}
65F60
\end{MSCcodes}
	
\section{Introduction}
This work focuses on  evaluating the action of a matrix function on a block vector, i.e., we look at the numerical approximation of the quantity 
\begin{equation}\label{eq:mat-fun}		
 f(A)B, 
\end{equation}
where $A\in\mathbb C^{n\times n}$, $f:\mathbb C\rightarrow\mathbb C$ is analytic around the eigenvalues of $A$, and $B\in\mathbb C^{n\times s}$. Computing $f(A)B$ is an advanced linear algebra task that is crucial for time integration of ODEs~\cite{hochbruck2010exponential}, trace estimation~\cite{ubaru2017fast}, network analysis~\cite{benzi2020matrix}, tensor equations~\cite{kressner2010krylov}, and many more applications (see \cite[Section~2]{higham-matfun} and the references therein). 

Krylov subspace projection methods are the workhorse algorithms for evaluating matrix function related expressions like those in  \eqref{eq:mat-fun}. The latter  methods iteratively build a
sequence of nested low-dimensional Krylov subspaces and extract an approximation by imposing either a Galerkin or 
a Petrov-Galerkin condition on the error. For large matrices, 
this approach is much more efficient than approximating $f(A)$ 
and then multiplying it with $B$. 
The smaller the dimension of the Krylov subspace, the larger 
the computational gain with respect to general purpose algorithms based on dense linear algebra. 
In particular, it is important to monitor the error of the approximation as the subspace is expanded, to not overestimate the dimension of the Krylov subspace needed to reach a target accuracy. 
However, there is not a natural way to check the convergence 
apart from explicitly forming the error. This is in contrast with 
Krylov methods for linear systems, where the residual can be 
cheaply evaluated. Only for specific matrix functions, quantities
playing similar roles to the residual \upd{have} been considered in 
the literature, see e.g., \cite{botchev2013residual,botchev2021residual, botchev24}.

Many studies propose a-priori and a-posteriori error bounds for Krylov-based procedures, with the ultimate goal of providing suitable stopping criteria \cite{beckermann2009error,chen2022error,simunec2023error}, pole selection strategies \cite{guttelblack,massei2021rational,moret2019krylov}, and 
restarting techniques \cite{frommer2014convergence,frommer2016error}. While the latter works  target the single vector scenario ($s=1$), here we specifically address the  block case $s>1$. This is a less explored setting, partly because one can extend in a straightforward manner the results of the single vector case by treating each column in $B$  independently. For instance, an immediate consequence of the polynomial exactness property \cite[Chapter~13.2]{higham-matfun} is that the Frobenius norm of the error associated with the Galerkin approximation of $f(A)B$ onto $\mathrm{colspan}(B, AB,\dots ,A^\ell B)$ can be related to the best polynomial approximation error 
on a spectral set for $A$ \cite{crouzeix2017numerical,trefethen2020spectra}.
% is upper bounded by: 
% \begin{equation}\label{eq:apriori}
% \min_{p(z)\in\Pi_\ell}\norm{p(A)}_2\cdot \norm{B}_F,
% \end{equation}
% with $\Pi_\ell$ denoting the set of polynomials with degree at most $\ell\in\mathbb N$.
When $s>1$, such bounds are not able to capture the true convergence rate of the algorithm, in general, as they only depend on the norm of $B$, without accounting for other features of the block vector, \upd{see also the numerical test reported in Figure~\ref{fig:test-intro}}. For this reason, there is a body of literature dealing 
specifically with the block case. 

\begin{figure}
	\centering
	\begin{tikzpicture}
		\begin{semilogyaxis}[
			width=0.7\textwidth,
			height=0.45\textwidth,
			xlabel={$j$},
			ylabel={},
			grid=major,
            % Move the legend out of the plot
			legend style={ at={(1.02,1)}, anchor=north west},
			mark size=2pt
			]
			
			% Column 1 = x
			
			% Column 4 = Single vector bound
			\addplot[
			no marks, orange, very thick
			] table[x index=0, y index=3] {test_intro.dat};
			\addlegendentry{A-priori bound}

			% Column 3 = Corollary 3.6
			\addplot[
			no marks, blue, very thick
			] table[x index=0, y index=2] {test_intro.dat};
			\addlegendentry{Corollary~\ref{cor:residual-matfun2}}
			
			% Column 5 = Corollary 3.3
			\addplot[
			no marks, red, very thick
			] table[x index=0, y index=4] {test_intro.dat};
			\addlegendentry{Corollary~\ref{cor:aposteriori-matfun1}}
			
			% Column 2 = Error
			\addplot[
			mark=square*, teal, very thick
			] table[x index=0, y index=1] {test_intro.dat};
			\addlegendentry{$\| \exp(A) B  - F_j \|_F$}
			
		\end{semilogyaxis}
	\end{tikzpicture}
	\caption{Approximation error and upper bounds for the computation of the 
		quantity $\exp(A) B$ via the block polynomial Krylov subspace method, with $A \in \mathbb C^{400 \times 400}$
		diagonal with logarithmically spaced eigenvalues over $\Omega=[-16, -10^{-4}]$, 
		and $B \in \mathbb C^{400 \times 2}$ with entries 
		$B_{i1} = 0.95^{i}, B_{i2} = 0.99^i$, scaled so that $\norm{B}_F=1$. The teal curve with square marks indicates the Frobenius norm of the error of the Krylov approximation. The orange curve represents the 
		a-priori bound $\min_{\mathrm{deg}(p)< j}\max_{z\in\Omega}|p(z)-\exp(z)|$, computed via the \texttt{minimax} function of the Chebfun toolbox~\cite{chebfun}. The blue and the red curves, are a posteriori upper bounds based on our Corollary~\ref{cor:aposteriori-matfun1}, and Corollary~\ref{cor:residual-matfun2}, respectively.}\label{fig:test-intro}
\end{figure}

\begin{paragraph}{Literature review}
The works that tackle the block scenario analyze the approximation properties of the block Krylov subspaces as subspaces (or $\mathbb C^{b \times b}$-submodules) of $\mathbb C^{n\times b}$
\cite{frommer2017block,gutknecht2007block}.  Note that when $f(z)=z^{-1}$ we are concerned with Krylov methods for solving linear systems with multiple right-hand sides~\cite{soodhalter2015block}. By 
considering the contour integral defining $f(A) B$, one can draw 
a connection between matrix functions and parameter dependent 
shifted linear systems. This observation has been exploited to 
use error bounds for block linear systems 
to design error estimators and restarting techniques for 
functions of matrices \cite{frommer2017block,frommer2020block,xu2024posteriori}. These works provide integral representations 
of the error that involve the residual of linear systems 
with matrix $zI - A$; by means of a quadrature, this leads 
to a formula for the approximation error that requires checking 
the residuals at a few points $z \in \mathbb C$. 
Shifted block linear systems are also relevant for the 
approximation of rational matrix-valued functions, such as the transfer function of multi-input multi-output 
linear time invariant systems \cite{antoulas2001approximation}. Within the context of 
interpolatory model order reduction and moment-matching, error 
bounds can be used to design greedy selection strategies for 
the interpolation points \cite{druskin2014adaptive}. 
Other approaches based on Gauss quadratures are 
used to provide error bounds for quantities like
$B^T f(A) B$, with $A$ symmetric positive definite~\cite{fenunetworks,golub-meurant,zimmerling2025monotonicity}.

% \textcolor{red}{Qualche dettaglio in più su cosa fanno}. 
\end{paragraph}
\begin{paragraph}{Contribution}
This paper makes several contributions to the block Krylov subspace setting. 
First, we propose two novel formulas for
the approximation error $E_j = f(A)B - F_j$, where $F_j$ 
is the Galerkin (or Petrov-Galerkin) approximation of $f(A)B$
obtained by means of $j$ steps of a block rational Krylov method.
These formulas are based on different expressions of the block FOM residual, which are integrated on a contour enclosing the spectrum of $A$ using the residue theorem.
The first formula ---contained in Corollary~\ref{cor:f-res}--- relies on a block generalization of the characteristic polynomial of the projection of $A$ onto the block Krylov subspace. 
By means of Keldysh's theorem, the formula allows us to relate the 
spectrum of $A$ with spectral information of the block characteristic polynomial.
The second formula ---from Corollary~\ref{cor:residual-matfun2}--- exploits the collinearity of the block FOM residuals
to have a compact expression that only involves the block upper 
Hessenberg matrices appearing in the rational block Arnoldi 
decomposition. These two formulas are used to 
derive a posteriori upper bounds 
for $\| E_j \|_F$, and present numerical tests to assess their effectiveness.
Other contributions concern the study of the properties 
of block characteristic polynomials \cite{lund2018new}, for which 
we discuss uniqueness, behavior under similarity transformations 
or change of initial vectors, and practical ways of evaluating their 
action. \upd{We show that the Hessenberg pencils associated with
the rational Arnoldi decomposition are linearizations of 
the block characteristic polynomials of the projected 
matrix.}
\end{paragraph}

\paragraph{Outline}	In Section~\ref{sec:shifted-linear-systems}, we introduce the notation 
and assumptions for block Krylov subspaces and the Petrov-Galerkin 
approximation for shifted linear systems. We introduce and 
study the main properties of block characteristic polynomials, 
and use them to characterize the matrix polynomials expressing the 
error and the residual associated with 
a block FOM approximation of shifted linear systems. Then, we 
study the collinearity of the residuals, and we retrieve a 
second formula based on this property. The proposed formulas 
address both polynomial and rational block Krylov subspaces.
Section~\ref{sec:matfun} is devoted to  error 
formulas and related a posteriori bounds for the approximation
of $f(A) B$. \upd{The bounds are validated through numerical examples.} Section~\ref{sec:computational} \upd{contains proof of key properties concerning block characteristic polynomial related with block upper Hessenberg matrices and pencils.}  Section~\ref{sec:conclusions} includes some conclusions and possible research directions.
\section{Petrov-Galerkin approximation of shifted linear systems with block Krylov subspaces}
\label{sec:shifted-linear-systems}
Let us suppose to have a matrix $A \in \mathbb C^{n \times n}$, and a block vector 
$B \in \mathbb C^{n \times s}$, with $s>1$; our aim is approximating the map $X_B(z) = (zI-A)^{-1}B$ for $z \in \mathbb C$.
% :\mathbb C\rightarrow \mathbb C^{n\times s}$, that associates with a scalar $z$, the solution of the shifted linear system $(zI-A)X=B$. More explicitly, we want to approximate 
% $$
% X_B(z):=(zI-A)^{-1}B,
% $$ 
% for $z\in\mathbb C$. 
In the rest of the paper, to highlight  the single vector case, whenever we consider  $s=1$ we replace the capital letter $B$ with the lower case letter $b$.

To build an approximation to $X_B(z)$, we consider $j$ iterations of the block Arnoldi method with $A$, and starting block vector $B$;  we assume that  the   procedure does not breakdown nor encounter deflation\footnote{\upd{By deflation we mean that a block vector generated by the Arnoldi procedure is not full rank.}}. This yields a matrix with orthonormal columns  
\[
\U_j := \begin{bmatrix}
  \\ 
  U_1 & \dots & U_j \\ 
  \\
\end{bmatrix} \in \mathbb C^{n \times js}, 
\]
with $B=U_1 R_B$, for some $R_B\in\mathbb C^{s\times s}$, and such that its block columns form a basis  for the 
block Krylov subspace \begin{align*}\mathcal K^\square_j(A, B)&:=\bspan\{B,AB,\dots,A^{j-1}B\}\subset \mathbb C^{n\times s},
\end{align*} 
where $\bspan$ indicates the set of linear combinations of the block vectors with $s\times s$ coefficients:
$$
\bspan(U_1,\dots, U_j):=\left\{ \sum_{i=1}^j U_iP_i,\quad P_i\in\mathbb C^{s\times s}\right\}.
$$
We refer to the matrix $\mathbf U_j$ (or to the set of its block columns $U_1,\dots,U_j$) as a \emph{block basis}. More specifically, 
$\mathcal K^\square_j(A, B)$ can be described as a free module 
over the ring $\mathbb C^{s \times s}$, of rank or dimension $j$.
Alternatively, $\mathcal K^\square_j(A, B)$ can be seen as a vector 
subspace of $\mathbb C^{n \times s}$, of dimension $s^2j$.
In fact, any block vector in $\mathcal K^\square_j(A, B)$ has $s$ columns, 
and each of those belong to the $js$-dimensional subspace
$\mathrm{colspan}(U_1, \ldots, U_j)$. 
Moreover, we say that  the block basis $\mathbf U_j$
is unitary/orthogonal when $\mathbf U_j^* \mathbf U_j=I_{js}$. 
% Moreover, we equip block Krylov subspaces with the block inner product $\langle X, Y\rangle=X^*Y$~\cite{gutknecht2007block}, and we say that a block basis $X$ is unitary/orthogonal when $\langle X, X\rangle=I_s$. 

The unitary block basis $\U_j$ satisfies the block Arnoldi relation
\begin{align*}
  A \U_{j} &= \mathbf U_{j+1} \underline H_{U,j} = 
  \U_j H_{U,j} + U_{j+1} \Gamma_{j+1}^U E_j^*, 
\end{align*}
where $H_{U,j}\in \mathbb C^{js \times js}$ is block upper Hessenberg with $s\times s$ blocks
  %=\U_j^* A\U_j 
  $$
H_{U,j}:=
\begin{bmatrix}
  \Phi_{1}^{U} & \Xi_{1,2}^{U}&\dots&\Xi_{1,j}^{U} \\ 
  \Gamma_2^{U} & \Phi_2^{U} & \ddots &\vdots \\ 
  & \ddots & \ddots & \Xi_{j-1,j}^{U} \\ 
  & & \Gamma_{j}^{U} & \Phi_j^{U} \\
\end{bmatrix},
$$
$E_j = e_j \otimes I_s$, $e_j$ is the $j$th vector of the canonical basis,  and 
$\underline H_{U,j}$ is the augmented matrix 
\[
\underline H_{U,j} := \begin{bmatrix}
  H_{U,j} \\ 
  \Gamma_{j+1}^U E_j^*
\end{bmatrix} \in \mathbb C^{(j+1)s \times js}. 
\]
Our goal is to find an approximation $X_{B,j}(z)\approx X_B(z)$ that, for all $z\in\mathbb C$, belongs to $\mathcal K^\square_j(A,B)$, and satisfies the Petrov-Galerkin condition
  $\Z_j^*(B-(zI-A)X_{B,j}(z)) = 0_{s \times s}$ for a chosen 
  unitary basis $\Z_j \in \mathbb C^{n \times js}$.    
  % $B-(zI-A)X_{B,j}(z)\in\mathcal Z^\perp$, \textcolor{red}{where $\mathcal Z$ is a $js$ dimensional subspace of $\mathbb C^{n\times s}$}. We stress that for a given subspace $\mathcal V$ of $\mathbb C^{n\times s}$, the block orthogonal subspace  is defined as $\mathcal V^\perp:=\{Y\in\mathbb C^{n\times s}:\ Y^* X=X^*Y=0_{s\times s},\ \forall X\in \mathcal V\}$.	
% If $\Z_j\in\mathbb C^{n\times js}$ is a unitary block basis of $\mathcal Z$, such that $\Z_j^* \U_j$ is invertible, 
To characterize the Petrov-Galerkin approximation, we make 
the following assumptions that will be required throughout the paper.
\medskip

\noindent \fbox{
	\parbox{.95\linewidth}{
		\begin{center}
			\textbf{Assumptions 1}
		\end{center}
		\begin{enumerate}
			\item The block Arnoldi procedure has been run for $j+1$ steps for $A$, and 
			starting block vector $B$, without encountering breakdown nor deflations. 
			%returning orthogonal bases $\U_{j+1} \in \mathbb{R}^{n \times (j+1)s}$ and $\Z_{j+1} \in \mathbb{R}^{n \times (j+1)s}$. \label{ass:Arnoldi}
			\item The block unitary bases $\Z_j, \U_j \in \mathbb{C}^{n \times js}$  
			are such that $\Z_j^* \U_j$ is invertible. 
			%\label{ass:ZjUj}
\end{enumerate}}}
\medskip 

Since $\Z_j^* \Z_j = I_{js}$ and $\Z_j^* \U_j$ is invertible, then
the Petrov-Galerkin condition implies that $X_{B,j}(z)=\U_jY_{B,j}(z)$ for some $Y_{B,j}(z)\in\mathbb C^{js\times s}$, and \begin{align*}
0&=\Z_j^*(B-(zI-A)\U_jY_{B,j}(z))\\
&\Leftrightarrow (z\Z_j^*\U_j-\Z_j^*A\U_j)Y_{B,j}(z)=\Z_j^*B\\
&\Leftrightarrow Y_{B,j}(z)= (zI-(\Z_j^*\U_j)^{-1}\Z_j^*A\U_j)^{-1} (\Z_j^*\U_j)^{-1}\Z_j^* B.
\end{align*}
Observe that $(\Z_j^*\U_j)^{-1}\Z_j^* B=(\Z_j^*\U_j)^{-1}\Z_j^* \U_jE_1R_B= E_1R_B$, and this leads to
\begin{equation}\label{eq:XBjarnoldi}
  \begin{split}
    X_{B,j}(z)&:=\U_j (zI-(\Z_j^*\U_j)^{-1}\Z_j^*A\U_j)^{-1}E_1 R_B.
  \end{split}
\end{equation}
  For convenience, we introduce the Petrov-Galerkin 
  and Galerkin projections of $A$ as:
  \[
    A^{U,Z}_j := (\Z_j \U_j)^{-1} \Z_j^* A \U_j, \quad
    A^{U}_j := A^{U,U}_j = \U_j^* A \U_j.
  \]
  Note that, in the Galerkin case, we have 
  $A^{U}_j = H_{U,j}$; this will not be the case 
  when dealing with rational Krylov subspaces.
  Moreover, the Arnoldi relation implies
\begin{equation}\label{eq:upper-hess}
  \begin{split}
    A_j^{U,Z} = (\Z_j^*\U_j)^{-1}\Z_j^*A\U_j &= H_{U,j} + (\Z_j^*\U_j)^{-1}\Z_j^*U_{j+1}\Gamma_{j+1}^UE_j^*.
  \end{split}
\end{equation}
Thus, the matrix $A_j^{U,Z}$, coincides with the block upper Hessenberg matrix $H_{U,j}$, apart from the last block column. In particular, it is block upper Hessenberg as well. 

% and we denote this matrix and the modified block entries as
%	\begin{equation*}
% 		A_j^{U,Z}:=(\mathbf Z_j^*\mathbf U_j)^{-1}A_j= H_{U,j}+\begin{bmatrix}
% 			0&\dots&0&\widetilde \Xi_{1,j}^U \\
% 			0&\dots&0&\vdots\\
% 			\vdots&&\vdots&\widetilde{\Xi}_{1,j-1}^U\\
% 			0&\dots&0&\widetilde{\Phi}_{j}^U
% 		\end{bmatrix}.
% 	\end{equation*} 
%  
We remark that Assumptions 1 imply that
the columns of $\U_{j+1}$ span a $(j+1)s$-dimensional subspace 
of $\mathbb C^{n}$,  
and the subdiagonal blocks $\Gamma_i^U$ 
are invertible for $i = 2, \ldots, j+1$. 
To show the second claim, assume by contradiction that 
$1 \leq i \leq j$ is the first index 
such that $\Gamma_i$ is not invertible. Then, we have 
\begin{align*}
	\mathcal K^\square_{i+1}(A, B) &= \bspan(
	U_1, \ldots, U_i, AU_i
	) \\
	&= \bspan\left(
	U_1, \ldots, U_i,  
	U_i \Phi_i + U_{i+1} \Gamma_i + \sum_{k < i} U_k \Xi_{k,i}
	\right) \\ 
	&= \bspan(
	U_1, \ldots, U_i, U_{i+1} \Gamma_i
	).
\end{align*}
Since $\Gamma_i$ is not full rank, the 
columns of $\left[ U_1, \ldots, U_i, U_{i+1} \Gamma_i \right]$
span a subspace of dimension strictly 
smaller than $(i+1)s$, and therefore there cannot be 
$(j + 1)s$ linear independent columns among those of 
$\U_{j+1}$.% In addition, note that point 1. in Assumptions 1 is guaranteed when 
% the pair $(A, B)$ is controllable \cite{kailath1980linear}, and 
% is equivalent to assuming that $(H_{j+1}, E_1)$ is controllable.
\footnote{
	See also \cite{gutknecht2009block,simoncini1996hybrid} for early works 
	on block Krylov methods.
} 

Finally, we introduce the residual map associated with \eqref{eq:XBjarnoldi}: 
\begin{align}\label{eq:resj}
  \res_{B,j}(z):=B - (zI - A)X_{B,j}(z).
\end{align}
In the case $s=1$, closed formulas for the errors and residuals associated with $X_{b,j}(z)$, are known, see~\cite[Section 4.5]{gutknecht},\cite[Section 7.4.1]{saad}, \cite[Lemma 2.4]{lin2021transfer}, and are related with the characteristic polynomial $\Lambda(\lambda)$ of $A_j^{U,Z}$. More precisely, we have that
\begin{equation} \label{eq:residual-s=1}
  \begin{split}
    \res_{b,j}(z) &:=b-(zI-A)X_{b,j}(z)= \frac{\Lambda(A)}{\Lambda(z)}  b.
  \end{split}
\end{equation}
We emphasize that choosing $\Z_j=\U_j$ boils down to the Galerkin case, where $X_{B,j}(z)$ is the solution returned by the BFOM algorithm~\cite{frommer2017block,frommer2020block}. The goal of the following section is to provide a non-trivial generalization of formula \eqref{eq:residual-s=1} to the case $s> 1$ that ---in the Galerkin case--- is a novel explicit formula for the residual of BFOM. %The case $\Z_j\neq \U_j$ will be crucial for expressing the error of moment matching approximations to the transfer function of multi input multi output linear time invariant systems (MIMO LTI). Finally, we also derive a block version of the Walsh-Hermite formula for the approximation error of function of matrices through block Krylov subspaces.
 To get there, we will recall a suitable formalism to operate with block vectors, and prove a few auxiliary \upd{lemmas}.

% \cite{elsworth20}. \textcolor{red}{controllare che ci sia effettivamente scritto} 

\subsection{Matrix polynomials and their action on block vectors}  
In the case $s=1$, a Krylov subspace $\mathcal K_j(A, b)$ starting from a single vector $b$, can be characterized as the set $\mathcal K_j(A, b)=\{p(A)b:\ \mathrm{deg}\ p<j\}$. When $s>1$, the block Krylov subspace $\mathcal K^\square_j(A,B)$ can be
written  in terms of  actions of the powers of $A$:
$$
\mathcal K^\square_j(A,B)=\left\{\sum_{i=0}^{j-1}  A^i B P_i:\ P_i\in\mathbb C^{s\times s}\right\}.
$$
The above set is described more similarly to the single vector case by considering the set of matrix polynomials $\mathbb C[\lambda]^{s\times s}:=\{Q(\lambda)=\sum_{i=0}^{k}\lambda^i Q_i:\ Q_i\in\mathbb C^{s\times s},\ k\in\mathbb N\}$, and introducing the operator\footnote{\upd{The operator $\circ$ has been introduced in \cite{kent1989chebyshev}, and the author attributes this notation to Gregg.}} $\circ$ defined as
\begin{equation}\label{eq:circ}
  Q(A)\circ B := \sum_{i=0}^{j-1}  A^i B Q_i,
\end{equation}
so that
$
  \mathcal K^\square_j(A,B)=\left\{P(A)\circ B:\ P(\lambda)\in\mathbb C[\lambda]^{s\times s},\  \mathrm{deg}\ P(\lambda)<j \right\}.
$
%	The next two results can be seen as a block extension of well established facts in the single vector case, see \cite[Lemma 2.1, Corollary 2.1]{paige} in the Galerkin case, 
%	and \cite[Lemma 2.2 and 2.3]{lin2021transfer} for the Petrov-Galerkin analogue. 
As for usual Krylov subspaces, the action of a polynomial of $A$ on $B$ is strictly linked to the action of the polynomial of the projected matrix on the projected block vector. More precisely, 
Theorem~2.7 from \cite{frommer2020block} rewrites
$P(A) \circ B$, with $\deg(P)<j$, as the evaluation of $P(\lambda)$ at 
any $sj \times sj$ matrix that coincides with $H_{U,j}$, apart from 
the last block column. The latter property implies 
$
  P(A) \circ B = \U_j P(A_j^{U,Z}) \circ (E_1 R_B)
$
for all matrix polynomials of degree strictly less than $j$. 
For our purposes, we need to slightly extend this result 
to cover the case when $P(\lambda)$ has degree $j$. We prove the statement also for the case  
of degree lower than $j$, because the proof is shorter than the one in \cite{frommer2020block}, and 
may be of its own interest. Before stating the result we define the following oblique  projector onto  $\mathrm{colspan}(\Z_j)^\perp$
$$
\Pi_{U,Z}:= I - \U_j (\Z_j^* \U_j)^{-1} \Z_j^*,
$$
and note that, in the Galerkin case, it is an orthogonal projector onto $\mathrm{colspan}(\U_j)^\perp$.
% First, we show that if $P(\lambda)$ is a matrix polynomial of degree $k$, then the action of $P(A)$ on $B$ can be expressed using the block upper Hessenberg matrix $\widehat H_{U, j}$ whenever  $j\ge k$. 
\begin{lemma} \label{lem:poly-lanczos}
  Let $P(\lambda) = \lambda^kP_k  + \ldots + \lambda P_1+P_0$ 
  be an $s \times s$ matrix polynomial of degree $k$. Under 
  Assumptions 1, 
  \begin{align*}
    P(A) \circ B &= \begin{cases}
      \mathbf U_j \Big[
      P(A_j^{U,Z}) \circ (E_1 R_B)
      \Big]  & j > k \\[.2cm] 
      \mathbf U_j \Big[ 
      P(A_j^{U,Z}) \circ (E_1R_B)
      \Big] + \Upsilon_j^U R_B P_j & j = k, \\ 
    \end{cases}
  \end{align*}
  where 
  $\Upsilon_j^U = \Pi_{U,Z}U_{j+1} \Gamma_{j+1}^U \ldots \Gamma_{2}^U$ has full rank. 
\end{lemma}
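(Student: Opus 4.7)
The plan is to reduce the statement to a claim about monomials by exploiting the linearity of both $\circ$-evaluations in the coefficients $P_i$. Writing $P(A) \circ B = \sum_{i=0}^{k} A^i B P_i$ and $P(A_j^{U,Z}) \circ (E_1 R_B) = \sum_{i=0}^{k} (A_j^{U,Z})^i E_1 R_B P_i$, and noting that the $P_i$ appear on the right of each term, it suffices to prove for every $0 \le i \le j$ the identity
\begin{equation*}
A^i B \;=\; \U_j (A_j^{U,Z})^i E_1 R_B + \delta_{ij}\,\Upsilon_j^U R_B,
\end{equation*}
where $\delta_{ij}$ is the Kronecker delta. The case $j>k$ of the lemma then follows from $0\le i \le k < j$, while the case $j=k$ picks up the extra boundary term from $i=j$.

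I would proceed by induction on $i$. The base case $i=0$ is just $B = U_1 R_B = \U_j E_1 R_B$. For the inductive step, I would first combine the block Arnoldi relation $A\U_j = \U_j H_{U,j} + U_{j+1} \Gamma_{j+1}^U E_j^*$ with \eqref{eq:upper-hess} to obtain the key identity
\begin{equation*}
A\,\U_j \;=\; \U_j A_j^{U,Z} + \Pi_{U,Z} U_{j+1} \Gamma_{j+1}^U E_j^*,
\end{equation*}
which is essentially an oblique version of the usual Arnoldi residual relation. Multiplying the inductive hypothesis $A^i B = \U_j (A_j^{U,Z})^i E_1 R_B$ by $A$ on the left then yields
\begin{equation*}
A^{i+1} B \;=\; \U_j (A_j^{U,Z})^{i+1} E_1 R_B + \Pi_{U,Z} U_{j+1} \Gamma_{j+1}^U \bigl(E_j^* (A_j^{U,Z})^i E_1\bigr) R_B.
\end{equation*}

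The technical heart of the argument, and the step that I expect to require the most care, is to control the trailing factor $E_j^* (A_j^{U,Z})^i E_1$. Since $A_j^{U,Z}$ is block upper Hessenberg with subdiagonal blocks $\Gamma_2^U,\dots,\Gamma_j^U$ (as noted right after \eqref{eq:upper-hess}), a direct block-structured induction on the powers of an upper Hessenberg matrix shows that $E_j^* (A_j^{U,Z})^i E_1 = 0$ for $i \le j-2$, whereas $E_j^* (A_j^{U,Z})^{j-1} E_1 = \Gamma_j^U \Gamma_{j-1}^U \cdots \Gamma_2^U$. Plugging these two facts back in makes the spurious term vanish for $i+1 < j$, proving the first bullet, and for $i+1 = j$ produces exactly $\Pi_{U,Z} U_{j+1} \Gamma_{j+1}^U \Gamma_j^U \cdots \Gamma_2^U R_B = \Upsilon_j^U R_B$, proving the second.

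Finally, to see that $\Upsilon_j^U$ has full column rank, I would use that all subdiagonal blocks $\Gamma_i^U$ ($i=2,\dots,j+1$) are invertible, which was established right before the lemma. It remains to show that $\Pi_{U,Z} U_{j+1}$ is injective: if $\Pi_{U,Z} U_{j+1} v = 0$ for some $v \in \mathbb{C}^s$, then $U_{j+1} v \in \mathrm{colspan}(\U_j)$, and since the columns of $\U_{j+1}$ are orthonormal this forces $U_{j+1} v = 0$, whence $v=0$. Composing with the invertible product $\Gamma_{j+1}^U \cdots \Gamma_2^U$ then preserves this property, so $\Upsilon_j^U$ has full rank $s$.
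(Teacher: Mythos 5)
Your proposal is correct and follows essentially the same route as the paper's proof: reduction to monomials by linearity, induction on the power using the oblique Arnoldi relation $A\U_j = \U_j A_j^{U,Z} + \Pi_{U,Z}U_{j+1}\Gamma_{j+1}^U E_j^*$, the block-Hessenberg structure to evaluate $E_j^*(A_j^{U,Z})^{i}E_1$, and the orthogonality of $U_{j+1}$ to $\mathrm{colspan}(\U_j)$ for the full-rank claim. No gaps.
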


\begin{proof}
  We can restrict to the case 
  $P(\lambda) = P^{[k]}(\lambda) := \lambda^k I_s$, 
  since by linearity
  \[
  P(A) \circ B = \sum_{i = 0}^k A^i B P_i = 
  \sum_{i = 0}^k (P^{[i]}(A) \circ B) P_i, 
  \]
  % that holds by linearity. 
  We proceed by induction. For 
  $k = 0$, we have $P(A)\circ B= B$, and the claim follows by noting that 
  $B = U_1 R_B = \mathbf U_j E_1 R_B$. 
  For $0 < k \leq j$, we have 
  \[
  P^{[k]}(A) \circ B = A [ P^{[k-1]}(A) \circ B ] = 
  A \mathbf U_j \left[ P^{[k-1]}(A_j^{U,Z}) \circ (E_1 R_B)\right]. 
  \]
  Using the Arnoldi relation, we get 
  \begin{align*}
    A \mathbf U_j \left[ P^{[k-1]}(A_j^{U,Z}) \circ (E_1 R_B) \right] &= 
    ( \mathbf U_j H_{U,j} + U_{j+1} \Gamma_{j+1}^U E_j^* ) (A_j^{U,Z})^{k-1} E_1 R_B.
  \end{align*}
  We recall from \eqref{eq:upper-hess} that $A_j^{U,Z} = H_{U,j} + (\Z_j^* \U_j)^{-1} \mathbf Z_j^* U_{j+1} \Gamma_{j+1}^U E_j^*$, 
  so that 
  \begin{align*}
    P^{[k]}(A) \circ B &=   
    \mathbf U_j (A_j^{U,Z})^k E_1 R_B  + \Pi_{U,Z}
    U_{j+1} \Gamma_{j+1}^U E_j^* (A_j^{U,Z})^{k-1} E_1 R_B \\ 
    &= \mathbf U_j P^{[k]}(A_j^{U,Z}) E_1 R_B  + \Pi_{U,Z}
    U_{j+1} \Gamma_{j+1}^U E_j^* (A_j^{U,Z})^{k-1} E_1 R_B
  \end{align*}
  To get the claim, note that if $k < j$ then
  $E_j^* (A_j^{U,Z})^{k-1} E_1 = 0$, and if $k = j$, we have 
  $E_j^* (A_j^{U,Z})^{k-1} E_1 = \Gamma_j^U \ldots \Gamma_2^U$. 
  
  As the Arnoldi method does not encounter breakdown nor deflation at step $j+1$, then $\Upsilon_j^U$ is full rank if and only if $(I - \U_j (\Z_j^* \U_j)^{-1} \Z_j^*)
  U_{j+1}$ is full rank. The latter is a linear combination of $U_{j+1}$ and 
  a block vector in $\mathcal K^\square_j(A, B)$, and is full rank since $U_{j+1}$ is orthogonal to $\mathcal K^\square_j(A, B)$. 
\end{proof}
Since our goal is to describe the residual of a Petrov-Galerkin approximation, we characterize all the block vectors that belong to a block Krylov subspace and are block orthogonal to \upd{$\mathcal Z:=\bspan(Z_1,\dots,Z_j)$, with $\mathbf{Z}_j=\left[Z_1\ \dots\ Z_j\right]$}.  To state the next result, we need 
to recall a generalization of 
characteristic polynomial for a block 
matrix with $s \times s$ blocks. 

\begin{definition}[Definition 2.23 in \cite{lund2018new}]
  Let $N$ be a $js \times js$ matrix with
  $s \times s$ blocks, and $W \in \mathbb C^{js \times s}$ be a block vector. Then, 
  an $s \times s$ matrix polynomial $P(\lambda)$ of degree $j$ is a 
  \emph{block characteristic polynomial
    of $N$ with respect to $W$} if 
  $P(N) \circ W = 0$. 
\end{definition}
% {\color{red} \begin{remark}\label{rem:magic}
% 		A quite important property is that if two block vectors $B,\widetilde B$ satisfy $\widetilde B= C_M B$ for an invertible $js\times js$ matrix $C_M$ that commutes with $M$, then any block characteristic polynomial for $M$ with respect to $B$ is also a block characteristic polynomial for $M$ with respect to $\widetilde B$. This easily follows from
% 		$$
% 		P(M)\circ \widetilde B=\sum_{j=1}^s M^j\widetilde BP_j= \sum_{j=1}^s M^jC_M BP_j=C_M\sum_{j=1}^s M^jBP_j=C_M(P(M)\circ B).
% 		$$
% \end{remark}}
\begin{remark}\label{rem:less-magic}
  $P(\lambda)$ is a block characteristic polynomial for $N$ with respect to $WX$ for an invertible $X\in\mathbb C^{s\times s}$, if and only if $\widetilde P(\lambda):= XP(\lambda)X^{-1}$ is a block characteristic polynomial for $N$ with respect to $W$. $P(\lambda)$ is monic if and only if $\widetilde P(\lambda)$ is monic.
\end{remark}

\begin{lemma} \label{lem:ortho-poly}
  Let $R^U \in \mathcal K^\square_{j+1}(A, B) \cap \mathcal Z^\perp$ be a nonzero block vector. Under Assumptions 1, 
  there \upd{exists} a block characteristic polynomial $P^U(\lambda)$ 
  for $A_j^{U,Z}$ with respect to $E_1R_B$ such that 
  $R^U = P^U(A) \circ B$.
  % $R^U = P^U(A) \circ B$ where $P^U(\lambda)$
  % is a block characteristic polynomial 
  % for $A_j^{U,Z}$ with respect to $E_1R_B$. 
\end{lemma}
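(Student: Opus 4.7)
My plan is to represent the block vector $R^U$ as $R^U = P^U(A)\circ B$ for some matrix polynomial $P^U$ of degree at most $j$---which is possible thanks to the characterization $\mathcal K^\square_{j+1}(A,B) = \{P(A)\circ B : \deg P \le j\}$---and then use the orthogonality to $\mathcal Z$ together with the block Arnoldi structure to show that $P^U$ actually has degree $j$ and satisfies $P^U(A_j^{U,Z})\circ(E_1 R_B) = 0$, which is exactly the definition of a block characteristic polynomial.

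Concretely, the first step is to invoke Lemma~\ref{lem:poly-lanczos} to obtain the decomposition
\[
R^U = \U_j\bigl[P^U(A_j^{U,Z}) \circ (E_1 R_B)\bigr] + \Upsilon_j^U R_B P_j^U,
\]
where $P_j^U$ denotes the leading coefficient of $P^U$ (set to zero when $\deg P^U < j$). Next, I would left-multiply by $\Z_j^*$: the second summand is annihilated, since $\Upsilon_j^U$ has $\Pi_{U,Z}$ as its leftmost factor and $\Z_j^*\Pi_{U,Z} = 0$ by a direct computation from the definition of $\Pi_{U,Z}$. Combining $\Z_j^*R^U = 0$ (the orthogonality hypothesis $R^U \in \mathcal Z^\perp$) with the invertibility of $\Z_j^*\U_j$ from Assumptions~1, one deduces $P^U(A_j^{U,Z})\circ(E_1 R_B) = 0$. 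This is precisely the algebraic condition required for $P^U$ to qualify as a block characteristic polynomial of $A_j^{U,Z}$ with respect to $E_1 R_B$---provided its degree equals $j$.

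The subtlest step, and the main obstacle I would flag, is verifying $\deg P^U = j$, since the definition requires the polynomial to have degree exactly $j$. Once the previous step is in place, the decomposition collapses to $R^U = \Upsilon_j^U R_B P_j^U$. Because $\Upsilon_j^U$ has full column rank (by Lemma~\ref{lem:poly-lanczos}) and $R_B$ is invertible (as $B$ has full column rank, which follows from the no-deflation assumption at step $1$), the hypothesis $R^U \neq 0$ forces $P_j^U \neq 0$, hence $\deg P^U = j$, closing the argument. The overall difficulty is essentially bookkeeping between the projection identity $\Z_j^*\Pi_{U,Z} = 0$, the invertibility of $\Z_j^*\U_j$, and the full-rank properties packaged into Lemma~\ref{lem:poly-lanczos}, rather than any deep new idea.
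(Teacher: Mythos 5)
Your proof is correct and follows essentially the same route as the paper's: write $R^U = P^U(A)\circ B$, apply Lemma~\ref{lem:poly-lanczos}, left-multiply by $\Z_j^*$ using $\Z_j^*\Pi_{U,Z}=0$ and the invertibility of $\Z_j^*\U_j$ to get $P^U(A_j^{U,Z})\circ(E_1R_B)=0$, and then conclude $P_j^U\neq 0$ from $R^U=\Upsilon_j^U R_B P_j^U\neq 0$ together with the full column rank of $\Upsilon_j^U R_B$. The only difference is that you spell out the verification of $\Z_j^*\Pi_{U,Z}=0$ and the invertibility of $R_B$, which the paper leaves implicit.
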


\begin{proof}
  Since $R^U \in \mathcal K^\square_{j+1}(A, B)$, 
  we can write it as $R^U = P^U(A) \circ B$ for a matrix polynomial $P^U(\lambda)$
  of degree at most $j$. In view of Lemma~\ref{lem:poly-lanczos}, 
  we have 
  \[
  P^U(A) \circ B = \U_j \Big[ 
  P^U(A_j^{U,Z}) \circ (E_1R_B)
  \Big] + \Upsilon_j^U R_B P_j^U , 
  \]
  where we have combined the two cases by allowing $P_j^U = 0$. 
  Imposing the orthogonality relation by left multiplying 
  with $\Z_j^*$, we get 
  \[
  0 = \Z_j^*\left[ 
  P^U(A) \circ B \right] = 
  \Z_j^*\U_jP^U(A_j^{U,Z}) \circ (E_1R_B), 
  \]
  where we have used that $\Z_j^* \Upsilon_j^U = 0$ by construction. 
  As $ \Z_j^*\U_j$ is invertible, we have that $P^U(A_j^{U,Z}) \circ (E_1R_B)=0$.
  Moreover,
  $R^U = \Upsilon_j^U R_B P_j^U\neq 0$ implies that $P_j^U \neq 0$, i.e., 
  $P^U(\lambda)$ is a block characteristic polynomial 
  for $A_j^{U,Z}$ with respect to  $E_1R_B$. 
\end{proof}

The above result characterizes $R^U$  in terms of  
$P^U(A) \circ B$,
where the latter is a block characteristic polynomial. However, a matrix can have several block characteristic polynomials related with 
the same block vector. For instance, given a
full rank block vector $W$, the set of all 
block characteristic polynomials for the zero 
matrix of size $js \times js$ associated with $W$ is made of all $s \times s$ matrix 
polynomials of degree $j$ such that 
$P_0 = 0$. This is different from the standard
characteristic polynomial, that is uniquely defined for all square matrices. 

In the next result 
we show that if the block vector is of the form $E_1 M$, for an invertible $s\times s$ matrix $M$, and 
the matrix is block upper Hessenberg  
with invertible subdiagonal blocks, we can identify all the degrees of freedom of the set of block characteristic polynomials. We recall that a matrix polynomial is \emph{regular} if it is 
square and its determinant is not identically zero over $\mathbb C$.

\begin{lemma}
  \label{lem:unique-block-characteristic-poly}
  Let $M\in\mathbb C^{s\times s}$ be invertible, $H_j$ be the block upper Hessenberg 
  matrix 
  \begin{equation} \label{eq:Hj1}
    H_j = \begin{bmatrix}
      \Phi_{1} & \Xi_{1,2}&\dots&\Xi_{1,j} \\ 
      \Gamma_2 & \Phi_2 & \ddots &\vdots \\ 
      & \ddots & \ddots & \Xi_{j-1,j} \\ 
      & & \Gamma_{j} & \Phi_j \\
    \end{bmatrix} \in \mathbb C^{js \times js}, 
  \end{equation}
  and let 
  $\mathcal S$ be the set of matrix polynomials satisfying 
  \[
  \mathcal S := \{ 
  P(\lambda) \in \mathbb C[\lambda]^{s \times s} \ | \ \deg(P)=j, \ 
  P(H_j) \circ (E_1M) = 0
  \}. 
  \]
  If the subdiagonal blocks $\Gamma_2,\dots,\Gamma_j,$ of $H_j$ 
  are invertible, then
  there exist $\Theta_i \in \mathbb C^{s \times s}$, $i=0,\dots,j-1$, 
  %such that the matrices $\Theta_i$ are functions of 
  %the block entries of $H_j$, and
  such that
  $\mathcal S$ can be characterized as follows:
  \[
  \mathcal S = \{ 
  P(\lambda) =(z^j I + z^{j-1} \Theta_{j-1} + 
  \ldots + z \Theta_1 + \Theta_0)  P_j \ | \ 
  P_j \in \mathbb C^{s \times s},\ P_j \neq 0
  \}. 
  \]	
  Moreover, $P(\lambda)\in\mathcal S$ is regular if and only if its leading coefficient $P_j$ is invertible.
\end{lemma}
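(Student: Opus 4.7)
The plan is to exploit the block upper Hessenberg structure to build an explicit block basis of $\mathbb C^{js\times s}$, and then translate the condition $P(H_j)\circ(E_1M)=0$ into a linear system whose degrees of freedom are exactly the leading coefficient $P_j$.

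First I would study the block vectors $H_j^i E_1$ for $i=0,\ldots,j-1$. Using the Hessenberg form of $H_j$ and induction, I expect to prove that $H_j^i E_1$ lies in the span of the block canonical basis $E_1,\ldots,E_{i+1}$, with leading block coefficient on $E_{i+1}$ equal to $\Gamma_{i+1}\Gamma_i\cdots\Gamma_2$. Consequently the $js\times js$ matrix
\[
K := \bigl[\,E_1\mid H_jE_1\mid H_j^2 E_1\mid \cdots \mid H_j^{j-1}E_1\,\bigr]
\]
is block upper triangular with invertible diagonal blocks $I_s,\Gamma_2,\Gamma_3\Gamma_2,\ldots,\Gamma_j\cdots\Gamma_2$, and is therefore invertible. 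This is the essential consequence of invertibility of the subdiagonal blocks, and is the main technical step.

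Next I would write, for any matrix polynomial $P(\lambda)=\sum_{i=0}^j \lambda^i P_i$ of degree $j$,
\[
P(H_j)\circ(E_1M)=\sum_{i=0}^{j} (H_j^i E_1)\,M P_i.
\]
Since $K$ is invertible, the block vector $H_j^j E_1$ admits a unique expansion $H_j^j E_1 = \sum_{i=0}^{j-1}(H_j^i E_1)\, C_i$ with $C_i\in\mathbb C^{s\times s}$. Substituting and using the linear independence guaranteed by invertibility of $K$, the condition $P(H_j)\circ(E_1M)=0$ becomes equivalent to $MP_i + C_i M P_j = 0$ for $i=0,\ldots,j-1$. Since $M$ is invertible, this yields $P_i=\Theta_i P_j$ with $\Theta_i := -M^{-1}C_i M$, proving that the set $\mathcal S$ is exactly the one claimed, parametrized by the free (nonzero) choice of $P_j$.

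Finally, for the regularity claim, I would observe that every $P(\lambda)\in\mathcal S$ factors as $P(\lambda)=Q(\lambda)P_j$ with $Q(\lambda)=\lambda^j I_s+\lambda^{j-1}\Theta_{j-1}+\cdots+\Theta_0$ monic. Since $\det Q(\lambda)$ is a monic scalar polynomial of degree $sj$, in particular not identically zero, we get $\det P(\lambda)=\det Q(\lambda)\cdot\det P_j$, so $P(\lambda)$ is regular if and only if $P_j$ is invertible. The main obstacle I anticipate is cleanly justifying the block linear independence step: I need to convert a linear combination $\sum_i(H_j^i E_1)X_i=0$ with $X_i\in\mathbb C^{s\times s}$ into a statement about the invertible matrix $K$ acting on the stacked $[X_0;\ldots;X_{j-1}]\in\mathbb C^{js\times s}$, so that $K[X_0;\ldots;X_{j-1}]=0$ forces all $X_i=0$.
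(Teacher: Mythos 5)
Your proposal is correct and follows essentially the same route as the paper: both arguments rest on the fact that $H_j^i E_1$ is supported on the first $i+1$ block rows with invertible leading block $\Gamma_{i+1}\cdots\Gamma_2$, which you package as invertibility of the block Krylov matrix $K$ and coefficient matching, while the paper performs the equivalent back-substitution block row by block row. The only other (minor) deviation is your regularity argument via $\det P(\lambda)=\det Q(\lambda)\det P_j$ with $\det Q$ monic of degree $js$, which is a clean alternative to the paper's rank bound $\mathrm{rank}(P(\lambda))\le\mathrm{rank}(P_j)$; both are valid.
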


\begin{proof}
  In view of Remark~\ref{rem:less-magic}, it is sufficient to show the claim in the case $M=I_s$.
  Note that the first column of a positive integer power 
  of 
  $H_j$ has the  form
  \[
  H_j^k E_1 = \left[\begin{smallmatrix}
    \times \\ 
    \vdots \\
    \times \\ 
    \Gamma_{k+1} \ldots \Gamma_2 \\ 
    0 \\ \vdots \\ 0 
  \end{smallmatrix}\right], \qquad 
  \forall\ k < j, 
  \]
  where only the first $k+1$ block entries can be nonzero. 
  Let $P(\lambda) = \sum_{i = 0}^j P_i z^i$; 
  the condition $P(H_j) \circ E_1 = 0$ can be written 
  explicitly as 
  \[
  H_j^j E_1 P_j + H_j^{j-1} E_1 P_{j-1} + 
  \ldots 
  + H_j E_1 P_1 + E_1 P_0 = 0. 
  \]
  Reading the last block row in the above equation yields 
  $
  \Gamma_{j} \ldots \Gamma_2 P_{j-1} = -(E_j^* H_j^j E_1)  P_j. 
  $
  Since $\Gamma_{j} \ldots \Gamma_2$ is invertible, $P_{j-1}$
  is uniquely determined from $P_j$, if the latter is given. Similarly, 
  reading the $k$th block rows yields relations of the form 
  \[
  \Gamma_{k+1} \ldots \Gamma_2 P_{k} = % G_k(P_{k+1}, \ldots, P_j),  
  - \sum_{i = k+1}^j (E_k^* H_j^i E_1) P_i, \qquad 
  k = 0, \ldots, j-1, 
  \]
  where the right-hand side 
  is a linear combination of the coefficients 
  $P_{k+1}, \ldots, P_{j}$. Therefore, $P_0, \ldots, P_{j-1}$ 
  are all uniquely determined once $P_j$ is fixed. Expanding 
  the relations shows that we can find matrices $M_i$ such that 
  $P_i = \Gamma_2^{-1} \ldots \Gamma_{k+1}^{-1} M_i P_j$.         
  In particular, if we set $\Theta_i = \Gamma_2^{-1} \ldots \Gamma_{k+1}^{-1} M_i$
  we obtain
  $P_i = \Theta_i P_j$. 
  
  Concerning the last part of the claim we recall that a matrix polynomial with invertible leading coefficient is always regular, and in the other case we have that $\mathrm{rank}(P(\lambda))\le \mathrm{rank}(P_j)$, for all $\lambda\in\mathbb C$.
\end{proof}

Lemma~\ref{lem:unique-block-characteristic-poly} says 
that for any choice of 
$P_j \neq 0$ there is a unique block characteristic polynomial with respect to $E_1M$ that has $P_j$ as leading coefficient. Moreover, Lemma~\ref{lem:unique-block-characteristic-poly} can be extended to
the set of block characteristic polynomials of $N$ with respect to $W$, whenever 
the pair $(N, W)$ is controllable. Before stating the result, we recall the definition of 
controllable pair. 

\begin{definition}
  Let $(N, W)$ be a pair of matrices with $N \in \mathbb C^{js \times js}$ and 
  $W \in \mathbb C^{js \times s}$; then, $(N, W)$ is a \emph{controllable 
  pair} if  the dimension of the vector subspace
  $
  \mathrm{colspan}\left(
    W, NW, \ldots, N^{j-1}W
  \right)$ is equal to $js$.
\end{definition}

We can 
always transform a controllable pair into a pair with a block upper Hessenberg matrix,
and $E_1$ as the block vector, and draw a connection between their block 
characteristic polynomials.

\begin{lemma}\label{lem:clenshaw-general} 
  Let $N\in\mathbb C^{js\times js}$, and $W\in\mathbb C^{js\times s}$ such that $(N,W)$ is a controllable pair, then
  \begin{itemize}
\item[$(i)$] There exists a unitary matrix $Q\in\mathbb C^{js\times js}$ such that
\begin{equation}\label{eq:hcontrollable}
Q^* NQ=H,\qquad Q^* W=E_1M,
\end{equation}
where $H$ is block upper Hessenberg, and $M$ is an invertible $s\times s$ matrix. 
\item[$(ii)$] All block upper Hessenberg matrices $H$ satisfying \eqref{eq:hcontrollable} for some $Q,M$, have invertible subdiagonal blocks.
\item[$(iii)$] $P(\lambda)$ is a block characteristic polynomial for 
  $H$ with respect to $E_1 M$ if and only if 
  $P(\lambda)$ is a block characteristic polynomial for $N$ with respect to $W$.	
  \end{itemize}
  \end{lemma}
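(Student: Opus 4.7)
The plan is to prove the three parts in sequence, using controllability together with the rigidity of block Hessenberg structures under unitary transformations. For $(i)$ I would run the block Arnoldi procedure on $(N, W)$. Controllability implies that the columns of $[W, NW, \ldots, N^{j-1}W]$ span $\mathbb C^{js}$, so in particular $W$ is of full column rank; a thin QR factorization $W = Q_1 M$ then yields an invertible $M \in \mathbb C^{s \times s}$ and an orthonormal block vector $Q_1$. Applying $j-1$ further block Arnoldi steps to $N$ starting from $Q_1$ produces orthonormal blocks $Q_2, \ldots, Q_j$; no breakdown or deflation can occur, as this would force the associated block Krylov subspace to have dimension less than $js$, contradicting controllability. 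Setting $Q := [Q_1, \ldots, Q_j]$, the standard Arnoldi identity gives that $H := Q^* N Q$ is block upper Hessenberg, and orthonormality yields $Q^* W = E_1 M$.

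For $(ii)$, suppose $Q$ unitary and $M$ invertible satisfy \eqref{eq:hcontrollable}. Unitarity of $Q$ transfers controllability from $(N,W)$ to $(H, E_1 M)$, since
\[
\mathrm{colspan}(E_1 M, H E_1 M, \ldots, H^{j-1} E_1 M) = Q^* \mathrm{colspan}(W, NW, \ldots, N^{j-1}W) = \mathbb C^{js}.
\]
Because $H$ is block upper Hessenberg, a short induction on $k$ shows that $H^k E_1 M$ has nonzero entries only in its first $k+1$ block rows, with the $(k+1)$-th block row exactly $\Gamma_{k+1} \cdots \Gamma_2 M$. The matrix $[E_1 M, H E_1 M, \ldots, H^{j-1} E_1 M]$ is therefore block lower triangular with diagonal blocks $M, \Gamma_2 M, \ldots, \Gamma_j \cdots \Gamma_2 M$, so the spanning condition above forces every product $\Gamma_{k+1} \cdots \Gamma_2 M$ to be invertible; since $M$ is invertible, each $\Gamma_i$ must be invertible.

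For $(iii)$, using the similarity $N = Q H Q^*$ together with $Q^*W = E_1 M$, for any monomial $\lambda^k P_k$ of $P(\lambda)$ we have
\[
N^k W P_k = (Q H Q^*)^k W P_k = Q H^k (Q^* W) P_k = Q H^k (E_1 M) P_k,
\]
and summing over $k$ yields $P(N) \circ W = Q \bigl[ P(H) \circ (E_1 M) \bigr]$; since $Q$ is invertible the two sides vanish simultaneously, which gives $(iii)$. The main obstacle is the rank argument in $(ii)$, where one must carefully exploit the block Hessenberg shape of the powers $H^k E_1 M$ to translate the spanning condition into invertibility of each subdiagonal block; the other parts are routine once $(i)$ is set up.
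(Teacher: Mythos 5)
Your proof is correct and follows essentially the same route as the paper: block Arnoldi on $(N,W)$ for $(i)$, transfer of controllability under the unitary similarity plus the triangular structure of the block Krylov matrix to force invertibility of the subdiagonal blocks for $(ii)$, and the direct similarity computation for $(iii)$. One cosmetic slip: the Krylov matrix $[E_1M,\ HE_1M,\ \ldots,\ H^{j-1}E_1M]$ with the nonzero pattern you describe is block \emph{upper} triangular, not lower triangular, but the diagonal blocks and the conclusion are exactly as you state.
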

\begin{proof}
A pair of matrices $Q$, and $H$ satisfying the first claim is obtained by running the block Arnoldi method on $N,W$ until completion, i.e. for $j$ steps. Property $(ii)$ follows from the invariance of controllability under change of basis, and the fact that a pair $(H,E_1M)$ is controllable if and only if $H$ has invertible subdiagonal blocks, \upd{ as we have already shown in the discussion after Assumptions~1}. Finally, we see that
\begin{align*}
P(H)\circ(E_1M)=0\ &\iff\ P(Q^* NQ)\circ(E_1M)=0\\
&\iff\ Q^*\left[P(N)\circ(QE_1M)\right]=0\\
&\iff\ P(N)\circ W=0.\qedhere
\end{align*}
\end{proof}
Lemma~\ref{lem:clenshaw-general} implies the following result that extends Lemma~\ref{lem:unique-block-characteristic-poly}.

\begin{corollary} \label{cor:block-characteristic-poly-controllable}
  Let $(N, W)$ be a controllable pair, with $N \in \mathbb C^{js \times js}$ 
  and $W \in \mathbb C^{js \times s}$. There 
  exist $\Theta_i \in \mathbb C^{s \times s}$, 
  for $i = 0, \ldots, j-1$, such that the 
  set $\mathcal S$ of all block characteristic polynomials 
  of $N$ with respect to $W$ can be written as 
  \[
  \mathcal S = \{ 
  P(\lambda) =(z^j I + z^{j-1} \Theta_{j-1} + 
  \ldots + z \Theta_1 + \Theta_0)  P_j \ | \ 
  P_j \in \mathbb C^{s \times s},\ P_j \neq 0
  \}. 
  \]	
  Moreover, $P(\lambda)\in\mathcal S$ is regular if and only if its leading coefficient $P_j$ is invertible.
\end{corollary}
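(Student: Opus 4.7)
The plan is to reduce this statement to Lemma~\ref{lem:unique-block-characteristic-poly} via the change of basis provided by Lemma~\ref{lem:clenshaw-general}. Since $(N, W)$ is a controllable pair, part $(i)$ of Lemma~\ref{lem:clenshaw-general} yields a unitary $Q \in \mathbb{C}^{js \times js}$ and an invertible $M \in \mathbb{C}^{s \times s}$ such that $H := Q^* N Q$ is block upper Hessenberg and $Q^* W = E_1 M$. Furthermore, part $(ii)$ guarantees that the subdiagonal blocks of $H$ are invertible, which is precisely the hypothesis needed to invoke Lemma~\ref{lem:unique-block-characteristic-poly} for the pair $(H, E_1 M)$.

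First, I would apply Lemma~\ref{lem:unique-block-characteristic-poly} to the transformed pair, obtaining matrices $\Theta_0, \ldots, \Theta_{j-1} \in \mathbb{C}^{s \times s}$ such that the set of block characteristic polynomials of $H$ with respect to $E_1 M$ has exactly the claimed parametrized form, with the additional regularity characterization. Next, part $(iii)$ of Lemma~\ref{lem:clenshaw-general} states that the set of block characteristic polynomials of $H$ with respect to $E_1 M$ coincides as a set with that of $N$ with respect to $W$. This immediately transfers the parametrization to the original pair $(N, W)$ using the same matrices $\Theta_i$, yielding the desired description of $\mathcal{S}$.

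The regularity statement transfers for free: since the sets of polynomials are literally identical, a polynomial is regular (i.e., has a determinant that is not the zero polynomial in $\lambda$) if and only if its leading coefficient $P_j$ is invertible, exactly as established in Lemma~\ref{lem:unique-block-characteristic-poly}. There is no genuine obstacle here; the work has already been done in establishing Lemma~\ref{lem:clenshaw-general} and Lemma~\ref{lem:unique-block-characteristic-poly}, and the present corollary is essentially a packaging result that records their combination in a basis-independent form. The only bookkeeping item worth verifying is that the parameter $P_j$ is the same object on both sides of the translation, which is clear because the change of variables $P(\lambda) \mapsto P(\lambda)$ induced by $Q$ does not alter the coefficients of $P$; only the matrix at which we evaluate changes.
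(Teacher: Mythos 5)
Your proof is correct and follows exactly the route the paper intends: the paper offers no separate proof, stating only that Lemma~\ref{lem:clenshaw-general} implies the corollary, and your argument spells out precisely that combination --- parts $(i)$ and $(ii)$ to reduce to the block upper Hessenberg setting of Lemma~\ref{lem:unique-block-characteristic-poly}, and part $(iii)$ to transfer the resulting parametrization (and the regularity criterion) back to $(N,W)$.
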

Under the assumptions of Corollary~\ref{cor:block-characteristic-poly-controllable}
we indicate the block characteristic polynomial 
with $P_j = I$ as the \emph{monic block characteristic polynomial} of $N$ with 
respect to $W$.

% \begin{proof}
% 	We can always find a unitary matrix $Q$ and a block upper 
% 	Hessenberg $H$ such that
% 	\[
% 	  Q^*A Q = H, \qquad Q^* B = E_1 R_B. 
% 	\]
% 	This can be obtained algorithmically in two steps: first, we 
% 	find a unitary transformation that maps $B$ into $E_1 R_B$ by 
% 	a QR factorization, and second, we perform a block upper 
% 	Hessenberg reduction by Householder reflectors, that leaves $E_1$ 
% 	unchanged. Note that 
% 	\begin{align*}
% 	  P(H) \circ (E_1 R_B) &= 
% 	    \sum_{i = 0}^j H^i E_1 R_B P_i = 
% 		Q^* \sum_{i = 0}^j (QHQ^*)^i Q E_1 R_B P_i \\
% 		&= Q^* \sum_{i = 0}^j A^i B P_i = 
% 		Q^* P(A) \circ B.
% 	\end{align*}
% 	Hence, $P(H) \circ (E_1 R_B) = 0$ if and only 
% 	if $P(A) \circ B = 0$.
% 	Since the similarity with $Q$ is a change of basis, 
% 	the pair $(A, B)$ is controllable 
% 	if and only if $(H, E_1 R_B)$ is controllable. The latter implies that 
% 	the subdiagonal blocks of $H$ are invertible, and the claim follows by
% 	applying Lemma~\ref{lem:unique-block-characteristic-poly}. 
% \end{proof}

\subsection{Residual and error block polynomials}\label{sec:error-poly}

We now have all the ingredients to express the 
residuals $\res_{B,j}(z)$ defined in \eqref{eq:resj}
in terms of the monic block characteristic 
polynomials of $A_j^{U,Z}$.
The condition $X_{B,j}(z) \in \mathcal 
K^\square_{j}(A, B)$  implies 
\[
X_{B,j}(z) = \chi_{B,j}(A, z) \circ B, 
\qquad 
\chi_{B,j}(\lambda, z) = P_{j-1}(z) \lambda^{j-1} + \ldots +
P_1(z) \lambda +  P_0(z), 
\]
where $\chi_{B,j}$ is a parameter dependent matrix polynomial of degree $j-1$. The residual $\res_{B,j}(z)$ belongs to $\mathcal K^\square_{j+1}(A, B)$ and 
satisfies a similar relation 
\[
\res_{B,j}(z) = \gamma_{B,j}(A, z) \circ B, 
\qquad 
\gamma_{B,j}(\lambda, z) = I_s - (z - \lambda) \chi_{B,j}(\lambda, z), 
\]
and, in particular, $\gamma_{B,j}(z, z) = I_s$.

The following \upd{lemma} generalizes the residual 
formula \eqref{eq:residual-s=1} to the block 
case.

\begin{lemma}\label{lem:res}
  Let $X_{B,j}(z)$, be the approximation 
  to $(zI - A)^{-1} B$  as in \eqref{eq:XBjarnoldi}.
  Then, under Assumptions 1,
  the residual $\res_{B,j}(z)$  defined 
  in \eqref{eq:resj} satisfies
  \[
  \res_{B,j}(z) = [\Lambda^U(A) \circ B]\Lambda^U(z)^{-1}, 
  \]
  and $\Lambda^U(z)$ is the monic block characteristic polynomial of $A_j^{U,Z}$ with respect to $E_1R_B$. 
\end{lemma}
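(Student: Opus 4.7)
The plan is to identify the parameter-dependent matrix polynomial $\gamma_{B,j}(\lambda,z) = I_s - (z-\lambda)\chi_{B,j}(\lambda,z)$, introduced immediately before the statement, with a right scalar multiple of the monic block characteristic polynomial $\Lambda^U(\lambda)$, and then read off that scalar from the normalization $\gamma_{B,j}(z,z) = I_s$.

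First I would observe that, by construction, $\res_{B,j}(z) = \gamma_{B,j}(A,z)\circ B \in \mathcal K^\square_{j+1}(A,B)$, and that the Petrov-Galerkin condition yields $\Z_j^* \res_{B,j}(z) = 0$, so that $\res_{B,j}(z)$ lies in $\mathcal Z^\perp$. Expanding $\gamma_{B,j}(\lambda,z) = I_s + \lambda\chi_{B,j}(\lambda,z) - z\chi_{B,j}(\lambda,z)$ shows that the leading coefficient of $\gamma_{B,j}(\cdot,z)$ in $\lambda$ is $P_{j-1}(z)$. Fix now $z$ such that $\res_{B,j}(z)\neq 0$. I would argue $P_{j-1}(z)\neq 0$ at such $z$: otherwise $\gamma_{B,j}(\cdot,z)$ has degree strictly less than $j$ and, by Lemma~\ref{lem:poly-lanczos}, $\res_{B,j}(z)\in\mathrm{colspan}(\U_j)$; combined with $\Z_j^*\res_{B,j}(z) = 0$ and the invertibility of $\Z_j^*\U_j$, this would force $\res_{B,j}(z)=0$, a contradiction. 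Consequently, Lemma~\ref{lem:ortho-poly} applies, and $\gamma_{B,j}(\cdot,z)$ is a block characteristic polynomial of $A_j^{U,Z}$ with respect to $E_1 R_B$.

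Next I would check that the pair $(A_j^{U,Z}, E_1 R_B)$ is controllable: by \eqref{eq:upper-hess}, $A_j^{U,Z}$ differs from $H_{U,j}$ only in the last block column, so it is block upper Hessenberg with the same invertible subdiagonal blocks $\Gamma_i^U$, while $R_B$ is invertible because no Arnoldi breakdown occurs at the first step. Corollary~\ref{cor:block-characteristic-poly-controllable} then supplies a matrix $G(z)\in\mathbb C^{s\times s}$ with $\gamma_{B,j}(\lambda,z) = \Lambda^U(\lambda)\, G(z)$. Substituting $\lambda = z$ and using $\gamma_{B,j}(z,z) = I_s$ gives $\Lambda^U(z)\, G(z) = I_s$, so $\Lambda^U(z)$ is invertible and $G(z) = \Lambda^U(z)^{-1}$. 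Pulling the constant $G(z)$ out of the $\circ$ operation yields
\[
\res_{B,j}(z) = [\Lambda^U(A)\circ B]\, \Lambda^U(z)^{-1}.
\]
Finally, I would remark that the identity holds at every $z$ for which $\Lambda^U(z)$ is invertible: applying Lemma~\ref{lem:poly-lanczos} with $k=j$ gives $\Lambda^U(A)\circ B = \Upsilon_j^U R_B$, which has full rank, so the right-hand side is nonzero at such $z$, and the assumption $\res_{B,j}(z)\neq 0$ used above is automatically satisfied. The main technical point is precisely the degree-$j$ step, that is, ensuring $P_{j-1}(z)\neq 0$ so that Corollary~\ref{cor:block-characteristic-poly-controllable} can actually be invoked to pin down $\gamma_{B,j}(\cdot,z)$ as a right multiple of $\Lambda^U(\lambda)$; once this is secured, the rest is a one-line substitution.
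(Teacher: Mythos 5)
Your argument follows essentially the same route as the paper's proof: write $\res_{B,j}(z)=\gamma_{B,j}(A,z)\circ B$, use membership in $\mathcal K^\square_{j+1}(A,B)\cap\mathcal Z^\perp$ (Lemma~\ref{lem:ortho-poly}) to recognize $\gamma_{B,j}(\cdot,z)$ as a block characteristic polynomial of $A_j^{U,Z}$ with respect to $E_1R_B$, factor it as $\Lambda^U(\lambda)G(z)$ via the uniqueness result, and read off $G(z)=\Lambda^U(z)^{-1}$ from the normalization $\gamma_{B,j}(z,z)=I_s$. The core is sound, and you are in fact more explicit than the paper about the nonvanishing hypothesis in Lemma~\ref{lem:ortho-poly} and about why the leading coefficient $P_{j-1}(z)$ cannot vanish when the residual is nonzero.

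The one step that does not work as written is the closing remark. To extend the identity to every $z$ at which $\Lambda^U(z)$ is invertible, you argue that the right-hand side $\Upsilon_j^U R_B\,\Lambda^U(z)^{-1}$ is nonzero and therefore ``the assumption $\res_{B,j}(z)\neq 0$ is automatically satisfied''; this infers that the left-hand side is nonzero from the nonvanishing of the right-hand side, which presupposes the very identity you are proving. The gap is real but easy to close with tools you already used: if $\res_{B,j}(z)=0$, then left-multiplying the expansion of Lemma~\ref{lem:poly-lanczos} by $\Z_j^*$ forces $\gamma_{B,j}(A_j^{U,Z},z)\circ(E_1R_B)=0$ and hence $\Upsilon_j^U R_B P_{j-1}(z)=0$, i.e.\ $P_{j-1}(z)=0$; but the triangular system in the proof of Lemma~\ref{lem:unique-block-characteristic-poly} shows that a polynomial of degree at most $j$ annihilating $E_1R_B$ with zero leading coefficient must be identically zero, contradicting $\gamma_{B,j}(z,z)=I_s$. (Alternatively, both sides are rational in $z$ and agree on a nonempty open set, hence everywhere they are defined.) With that repair the proof is complete and matches the paper's.
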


\begin{proof}
  Since $\res_{B,j}(z)$ belongs to 
  $\mathcal K^\square_{j+1}(A, B) \cap 
  \mathcal Z^\perp$ we 
  can use Lemma~\ref{lem:ortho-poly} and write it as 
  $
  \res_{B,j}(z) = P^U(A, z) \circ B, 
  $
  where $P^U(\lambda, z)$ verifies 
  $P^U(A_j^{U,Z}, z) \circ (E_1R_B) = 0$. Then, by Lemma~\ref{lem:unique-block-characteristic-poly}, 
  we can write $P^U(\lambda, z) = \Lambda^U(\lambda) \rho(z)$, where $\Lambda^U(\lambda)$ is the monic block characteristic polynomial of $A_j^{U,Z}$ with respect to $E_1R_B$, and $\rho(z)$ is a 
  $s \times s$ matrix-valued function $\rho(z)$. 
  We remark that, following the same argument of 
  the proof of Lemma~\ref{lem:ortho-poly} we 
  may write 
  \begin{equation} \label{eq:upsilon-res}
    \res_{B,j}(z) = \Upsilon_j^U R_B \rho(z). 
  \end{equation}
  Since $\Upsilon_j^U R_B $ has full column rank, we can left 
  multiply by any left inverse, and obtain $\rho(z)$ 
  as a product of the residual (which is \upd{a holomorphic} function when $z$ is not an eigenvalue of $A_j^{U,Z}$), and a constant matrix. Hence, 
  $\rho(z)$ is holomorphic on the complement of the eigenvalues of $A_j^{U,Z}$. Imposing
  $P^U(\lambda, \lambda) = I_s$ provides the claim. 
\end{proof}

\subsection{Collinearity of residuals}
\label{sec:collinearity}

It is \upd{well known} that the residuals of shifted linear systems 
associated with FOM are collinear. This property extends to 
block FOM \cite{gutknecht,frommer2017block}, and to $\res_{B,j}(z)$ defined above 
obtained by imposing a Petrov-Galerkin condition.
To the best of our knowledge, \upd{the collinearity of residuals in the Petrov-Galerkin scenario has not been pointed 
out in the literature}, and this property follows from the results of the previous section. 
In particular, from  Equation~\eqref{eq:upsilon-res} 
and the definition of $\Upsilon_j^U$ in Lemma~\ref{lem:poly-lanczos}
 we have that
all residuals are block collinear (or cospatial, using the terminology 
from \cite{frommer2017block}) to the block vector 
$\Pi_{U,Z}U_{j+1}=(I - \U_j (\Z_j^* \U_j)^{-1} \Z_j^*)U_{j+1}$. The dependency on $z$
comes from the right multiplication with the inverse of $\Lambda^U(z)$. 
With the aim of providing a more explicit formula for the latter, 
we state two technical lemmas.

\begin{lemma} \label{lem:ritz-values}
  Let $H$ be a $js \times js$ block upper Hessenberg 
  matrix as in \eqref{eq:Hj1}, with 
  invertible subdiagonal blocks $\Gamma_2, \ldots, \Gamma_j$. 
  Let $\Lambda(z)$ be its 
  block characteristic polynomial with respect to 
  $E_1$ with leading coefficient 
  $\Gamma_2^{-1} \ldots \Gamma_j^{-1}$. 
  Then, $H$ is a linearization of the matrix polynomial 
  $\Lambda(z)$. In addition, for every simple eigenvalue $\theta$ 
  of $H$, with right and left eigenvectors 
  $v$ and $w$, we have that
  \[
    w_1^* \Lambda(\theta) = 0, \quad \text{ and } \quad 
    \Lambda(\theta) v_j = 0,		  
   \]
   where $v,w$ are partitioned in blocks of size $s$ as follows:
   \[
    v = \begin{bmatrix}
    v_1 \\
    \vdots \\
    v_{j-1} \\
    v_j
    \end{bmatrix}, \qquad 
    w = \begin{bmatrix}
    w_1 \\
    w_2 \\ 
    \vdots \\
    w_j
    \end{bmatrix}.
  \]
  Finally, it holds that 
  $
    w^* v = w_1^* \Lambda'(\theta) v_j. 
  $
\end{lemma}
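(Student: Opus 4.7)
My approach is to first establish the linearization claim by constructing explicit unimodular matrix polynomials $E(z)$ and $F(z)$ that reduce $zI-H$ to a block diagonal form containing $\Lambda(z)$. The key object is the family of $s\times s$ matrix polynomials $\Psi_1(z),\ldots,\Psi_j(z)$ defined by the backward recursion $\Psi_j(z)=I$ and
\[
\Psi_{k-1}(z) \;=\; \Gamma_k^{-1}\Bigl[(zI-\Phi_k)\Psi_k(z)-\sum_{i>k}\Xi_{k,i}\Psi_i(z)\Bigr], \qquad k=j,j-1,\ldots,2,
\]
which is well posed because the $\Gamma_k$ are invertible. By construction, $\deg\Psi_k=j-k$ with leading coefficient $\Gamma_{k+1}^{-1}\cdots\Gamma_j^{-1}$, and a direct calculation shows
\[
(zI-H)\begin{bmatrix}\Psi_1(z)\\ \vdots\\ \Psi_j(z)\end{bmatrix} \;=\; \begin{bmatrix}\widetilde\Lambda(z)\\ 0\\ \vdots\\ 0\end{bmatrix},
\qquad \widetilde\Lambda(z):=(zI-\Phi_1)\Psi_1(z)-\sum_{k\geq 2}\Xi_{1,k}\Psi_k(z),
\]
where $\widetilde\Lambda(z)$ has degree $j$ with leading coefficient $\Gamma_2^{-1}\cdots\Gamma_j^{-1}$. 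A short telescoping argument then shows $\widetilde\Lambda(H)\circ E_1 = 0$, so by the uniqueness part of Lemma~\ref{lem:unique-block-characteristic-poly} one has $\widetilde\Lambda(z)=\Lambda(z)$.

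Next, take $F(z)$ to be the block upper triangular unimodular matrix with $I$'s on the diagonal and $(\Psi_1(z),\ldots,\Psi_{j-1}(z))^\top$ filling the last block column. Then $(zI-H)F(z)$ takes the shape $\bigl[\begin{smallmatrix}A(z) & \Lambda(z)\\ B(z) & 0\end{smallmatrix}\bigr]$, where the $(j-1)s\times(j-1)s$ block $B(z)$ is block upper triangular with constant, invertible diagonal blocks $-\Gamma_2,\ldots,-\Gamma_j$, hence unimodular. A block elimination using $B(z)^{-1}$, followed by a block column permutation and absorption of $B(z)$, produces unimodular $E(z), \widetilde F(z)$ such that $E(z)(zI-H)\widetilde F(z) = \mathrm{diag}(\Lambda(z), I_{(j-1)s})$, proving that $H$ is a linearization of $\Lambda(z)$.

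The eigenvector relations follow by transporting null vectors across this equivalence. For a simple eigenvalue $\theta$, the right kernel of $\mathrm{diag}(\Lambda(\theta),I)$ consists of vectors $[u;0]$ with $\Lambda(\theta)u=0$. Tracking the transformation, $v=\widetilde F(\theta)[u;0]$ takes the form $[\Psi_1(\theta)u;\ldots;\Psi_j(\theta)u]$, so $v_j=u$ and $\Lambda(\theta)v_j=0$. The analogous computation for the left kernel shows $w_1^*=w'^*$ where $w'^*\Lambda(\theta)=0$, giving $w_1^*\Lambda(\theta)=0$. For the last identity, I would differentiate the linearization $E(z)(zI-H)\widetilde F(z)=\mathrm{diag}(\Lambda(z),I)$ with respect to $z$, yielding
\[
E'(z)(zI-H)\widetilde F(z)+E(z)\widetilde F(z)+E(z)(zI-H)\widetilde F'(z)=\mathrm{diag}(\Lambda'(z),0).
\]
Evaluating at $\theta$ and flanking by $[w'^*,0]$ on the left and $[u;0]$ on the right, the two terms containing $(\theta I-H)$ vanish because of the eigenvalue equations, leaving $w^*v=w_1^*\Lambda'(\theta)v_j$.

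\textbf{Main obstacle.} The delicate step is the identification $\widetilde\Lambda=\Lambda$: one must verify both that $\widetilde\Lambda(H)\circ E_1 = 0$ (via the telescoping sum) and that its leading coefficient is exactly $\Gamma_2^{-1}\cdots\Gamma_j^{-1}$, so that Lemma~\ref{lem:unique-block-characteristic-poly} applies. Beyond that, the proof is essentially bookkeeping around the unimodular reduction and the chain rule applied to the linearization.
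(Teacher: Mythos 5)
Your proof is correct and follows essentially the same strategy as the paper's: a unimodular reduction of the pencil $zI-H$ that exposes the block characteristic polynomial, followed by transporting left/right eigenvectors across the equivalence and differentiating the resulting identity to obtain $w^*v=w_1^*\Lambda'(\theta)v_j$. The only substantive difference is that you build the reduction from the backward ``cofactor'' recursion $\Psi_j,\dots,\Psi_1$ with $(zI-H)[\Psi_1^*,\dots,\Psi_j^*]^*=E_1\widetilde\Lambda(z)$ (using the telescoping identity to get $\widetilde\Lambda(H)\circ E_1=0$ and then uniqueness from Lemma~\ref{lem:unique-block-characteristic-poly}), whereas the paper reaches an anti-triangular form via block row elimination driven by the forward Clenshaw-type recurrence of Lemma~\ref{lem:clenshaw}; both routes are sound and deliver the same conclusions.
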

The proof of the above result relies on a 
Clenshaw-like recurrence 
for the evaluation of a matrix polynomial, and is
postponed to
Section~\ref{sec:computational}.

\begin{remark} \label{rem:monic-eigenvectors}
  If one is interested in the eigenvectors associated with 
  an eigenvalue $\theta$ 
  of $\Lambda^U(z) M$, for an invertible matrix $M$, they can 
  be retrieved as $w_1$ and $M^{-1} v_j$. The scaling property 
  $w^* v = w_1^* (\Lambda(\theta) M)' M^{-1} v_j$ is preserved. This 
  can be used to compute the eigenvectors for the monic version of 
  $\Lambda^U(z)$. 
\end{remark}

\begin{lemma} \label{lem:keldysh-inverse}
  Let $\Lambda_{j}^U(z)$ be the monic block characteristic 
  polynomial of $A_j^{U,Z}$ with respect to $E_1R_B$. 
  Then
  \[
    \Lambda_{j}^U(z)^{-1} = 
      R_B^{-1} 
    (\Gamma_2^U)^{-1} \ldots (\Gamma_{j}^U)^{-1} E_j^* (zI - A_j^{U,Z})^{-1} E_1 R_B.
  \]
\end{lemma}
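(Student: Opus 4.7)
The plan is to compute the residual $\res_{B,j}(z)$ in two different ways and match the results. The first expression comes from Lemma~\ref{lem:res} (and the intermediate formula \eqref{eq:upsilon-res} in its proof), which gives
\[
\res_{B,j}(z) = \Upsilon_j^U R_B \Lambda_j^U(z)^{-1} = \Pi_{U,Z} U_{j+1} \Gamma_{j+1}^U \Gamma_j^U \cdots \Gamma_2^U R_B \Lambda_j^U(z)^{-1},
\]
after substituting the explicit form of $\Upsilon_j^U$ from Lemma~\ref{lem:poly-lanczos}.

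The second expression is obtained by a direct manipulation. Starting from $\res_{B,j}(z) = B - (zI - A)X_{B,j}(z)$, I would use $B = \U_j E_1 R_B$ together with $X_{B,j}(z) = \U_j (zI - A_j^{U,Z})^{-1} E_1 R_B$ to rewrite
\[
\res_{B,j}(z) = \bigl[\U_j(zI - A_j^{U,Z}) - (zI - A)\U_j\bigr](zI - A_j^{U,Z})^{-1} E_1 R_B = (A\U_j - \U_j A_j^{U,Z})(zI - A_j^{U,Z})^{-1} E_1 R_B.
\]
Then I would use the Arnoldi relation together with \eqref{eq:upper-hess} to simplify the difference:
\[
A\U_j - \U_j A_j^{U,Z} = \bigl(I - \U_j(\Z_j^*\U_j)^{-1}\Z_j^*\bigr)U_{j+1}\Gamma_{j+1}^U E_j^* = \Pi_{U,Z} U_{j+1} \Gamma_{j+1}^U E_j^*.
\]
This yields
\[
\res_{B,j}(z) = \Pi_{U,Z} U_{j+1} \Gamma_{j+1}^U E_j^* (zI - A_j^{U,Z})^{-1} E_1 R_B.
\]

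Equating the two expressions for $\res_{B,j}(z)$ gives
\[
\Pi_{U,Z} U_{j+1} \Gamma_{j+1}^U \Gamma_j^U \cdots \Gamma_2^U R_B \Lambda_j^U(z)^{-1} = \Pi_{U,Z} U_{j+1} \Gamma_{j+1}^U E_j^* (zI - A_j^{U,Z})^{-1} E_1 R_B.
\]
The factor $\Pi_{U,Z} U_{j+1} \Gamma_{j+1}^U$ has full column rank (since $\Pi_{U,Z} U_{j+1}$ has full rank by the argument at the end of the proof of Lemma~\ref{lem:poly-lanczos} and $\Gamma_{j+1}^U$ is invertible under Assumptions~1), so it can be left-cancelled. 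Left-multiplying what remains by $R_B^{-1}(\Gamma_2^U)^{-1}\cdots(\Gamma_j^U)^{-1}$ on the left and rearranging $(\Gamma_j^U\cdots\Gamma_2^U)^{-1} = (\Gamma_2^U)^{-1}\cdots(\Gamma_j^U)^{-1}$ produces the claimed identity.

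I do not expect any real obstacle: the only point worth checking carefully is that the cancellation of $\Pi_{U,Z} U_{j+1} \Gamma_{j+1}^U$ is legitimate, which hinges precisely on the full column rank statement already established for $\Upsilon_j^U$ in Lemma~\ref{lem:poly-lanczos}. Everything else is bookkeeping with the Arnoldi relation.
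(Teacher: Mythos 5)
Your argument is correct, but it takes a genuinely different route from the paper. The paper proves Lemma~\ref{lem:keldysh-inverse} spectrally: it invokes Keldysh's theorem to expand $P(z)^{-1}$ in partial fractions over its eigenvalues, uses Lemma~\ref{lem:ritz-values} (whose proof rests on the Clenshaw-type linearization machinery of Section~\ref{sec:computational}) to identify the left and right eigenvectors of $\Lambda^U(z)$ with the first and last blocks of the eigenvectors of $A_j^{U,Z}$, and then resums the expansion into $E_j^*(zI-A_j^{U,Z})^{-1}E_1$, treating defective eigenvalues via the general form of Keldysh's theorem. You instead compute the residual a second time by elementary algebra — $B-(zI-A)\U_j(zI-A_j^{U,Z})^{-1}E_1R_B=(A\U_j-\U_j A_j^{U,Z})(zI-A_j^{U,Z})^{-1}E_1R_B$ together with the Arnoldi relation and \eqref{eq:upper-hess} — which yields exactly the formula of Theorem~\ref{thm:res-H}, and then cancel the full-column-rank factor $\Pi_{U,Z}U_{j+1}\Gamma_{j+1}^U$ against \eqref{eq:upsilon-res}. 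This reverses the paper's logical order (the paper derives Theorem~\ref{thm:res-H} \emph{from} Lemma~\ref{lem:keldysh-inverse}), but there is no circularity since your direct computation uses only the Arnoldi relation and the definition of $X_{B,j}(z)$. What your route buys is economy and generality: it avoids the eigenvector normalization, needs no case distinction for non-simple eigenvalues, and the identity between two rational functions holds wherever both sides are defined. What the paper's route buys is the explicit eigenvector correspondence $v_i=E_j^*Me_i$, $w_i=E_1^*M^{-T}e_i$, which is not a by-product of your argument but is needed downstream in Corollary~\ref{cor:f-res} and in the evaluation of the quantities $L(\lambda_h)$; so in the economy of the paper the Keldysh detour is not wasted. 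The one step worth stating explicitly is the cancellation: $\Pi_{U,Z}U_{j+1}$ has full column rank by the argument closing the proof of Lemma~\ref{lem:poly-lanczos}, and $\Gamma_{j+1}^U$ is invertible under Assumptions~1, so $\Pi_{U,Z}U_{j+1}\Gamma_{j+1}^U$ admits a left inverse and may be cancelled — you flag this correctly.
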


\begin{proof}
  In view of Remark~\ref{rem:less-magic},  $P(z) =  R_B^{-1} \Lambda_{j}^U(z) R_B(z) (\Gamma_2^U)^{-1} \ldots (\Gamma_{j}^U)^{-1}$ is 
  the block characteristic polynomial of $A_j^{U,Z}$ with respect to $E_1$, 
  with leading coefficient $(\Gamma_2^U)^{-1} \ldots (\Gamma_{j}^U)^{-1}$.
  Let $\theta_i$ be the eigenvalues 
  of $A_j^{U,Z}$, and note that they coincide with the eigenvalues 
  of $\Lambda_{j}^U(z)$, and of $P(z)$.
      Let us first assume that 
      all eigenvalues of $\Lambda^U_j(z)$
      are simple. 
      \upd{Since $P(z)^{-1}$ is 
      a rational matrix function with poles at the eigenvalues 
      of $P(z)$, it can be written as a partial fraction expansion
      over its poles. Keldysh's theorem \cite[Theorem~2.4]{glr}\footnote{See also \cite{guettelnonlinear} for a recent work dealing with Keldysh's theorem with a notation closer to the one used in this paper.}
      yields an explicit expression for such formula:}
      %Then, 
  %thanks to Keldysh's theorem \cite{guettelnonlinear} and \cite[Theorem~2.4]{glr} we may write 
  \begin{equation} \label{eq:keldysh-1}
    P(z)^{-1} = \sum_{i = 1}^{js} \frac{1}{z - \theta_i} v_i w_i^* 					
  \end{equation}
  where $v_i, w_i \in \mathbb C^s$ 
  are right and left eigenvectors 
  of $P(z)$
  associated with 
  $\theta_i$, normalized imposing 
  \upd{$w_i^* P'(\theta_i) v_i = 1$}. 
   Lemma~\ref{lem:ritz-values} ensures that
$v_i, w_i$ are determined by 
  $
    v_i = E_j^{*} M e_i,
    w_i = E_1^{*} M^{-T} e_i, 
  $
  where $M \in \mathbb C^{js \times js}$ 
  is any eigenvector matrix of $A_j^{U,Z}$. 
  Substituting the expression for the eigenvectors in 
  Equation~\eqref{eq:keldysh-1} yields 
  \[
    P(z)^{-1} = 
      E_j^* M 
      \left( \sum_{i = 1}^{js} \frac{e_i e_i^*}{z - \theta_i} \right) 
    M^{-1} E_1 = E_j^* (zI - A_j^{U,Z})^{-1} E_1.
  \]
  Then, the claim follows from $\Lambda_j^{U}(z)^{-1} = R_B^{-1} (\Gamma_2^U)^{-1} \ldots (\Gamma_j^U)^{-1} P(z)^{-1} R_B$. If there are non-trivial Jordan triples in 
      $\Lambda_j^U(z)$, the result is obtained by
      using the more general version of Keldysh's theorem
      \cite{glr,guettelnonlinear} and that $A_j^{U,Z}$ is
      a linearization of $\Lambda_j^U(z)$.
\end{proof}

Building on Lemma~\ref{lem:ritz-values} and 
Lemma~\ref{lem:keldysh-inverse}, we can  provide an 
explicit formula for the residual map $\res_{B,j}(z)$, in 
the Petrov-Galerkin case. 

\begin{theorem} \label{thm:res-H}
  Under the assumptions of Lemma~\ref{lem:res}, then: 
  \[
    \res_{B,j}(z) = \Pi_{U,Z}U_{j+1} 
    \Gamma_{j+1}^U 
     E_j^* (zI - A_j^{U,Z})^{-1} E_1R_B .
  \]
\end{theorem}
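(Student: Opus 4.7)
The plan is to combine three ingredients already established: the characterization of the residual in terms of the monic block characteristic polynomial (Lemma~\ref{lem:res}), the explicit formula for $\Lambda^U(A)\circ B$ coming from Lemma~\ref{lem:poly-lanczos} applied at the critical degree $k=j$, and the resolvent expression for $\Lambda^U(z)^{-1}$ from Lemma~\ref{lem:keldysh-inverse}. All subdiagonal blocks $\Gamma_2^U,\ldots,\Gamma_{j+1}^U$ will appear and then cancel telescopically, leaving only $\Gamma_{j+1}^U$.

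First I would evaluate $\Lambda^U(A)\circ B$. By Lemma~\ref{lem:poly-lanczos} applied in the case $j=k$ to $P(\lambda)=\Lambda^U(\lambda)$, whose leading coefficient is $I_s$ (since $\Lambda^U$ is monic),
\[
\Lambda^U(A)\circ B \;=\; \U_j\bigl[\Lambda^U(A_j^{U,Z})\circ(E_1R_B)\bigr] \;+\; \Upsilon_j^U R_B \cdot I_s.
\]
The first term vanishes because $\Lambda^U$ is a block characteristic polynomial of $A_j^{U,Z}$ with respect to $E_1R_B$, so we obtain
\[
\Lambda^U(A)\circ B \;=\; \Upsilon_j^U R_B \;=\; \Pi_{U,Z}U_{j+1}\,\Gamma_{j+1}^U\Gamma_j^U\cdots\Gamma_2^U\,R_B.
\]

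Next I would invoke Lemma~\ref{lem:res} together with Lemma~\ref{lem:keldysh-inverse} to write
\[
\res_{B,j}(z) \;=\; \bigl[\Lambda^U(A)\circ B\bigr]\,\Lambda^U(z)^{-1}
\;=\; \Pi_{U,Z}U_{j+1}\Gamma_{j+1}^U\Gamma_j^U\cdots\Gamma_2^U R_B\cdot R_B^{-1}(\Gamma_2^U)^{-1}\cdots(\Gamma_j^U)^{-1}E_j^*(zI-A_j^{U,Z})^{-1}E_1R_B.
\]
The factors $R_B R_B^{-1}$ cancel, and the product $\Gamma_j^U\cdots\Gamma_2^U(\Gamma_2^U)^{-1}\cdots(\Gamma_j^U)^{-1}$ telescopes to the identity, leaving
\[
\res_{B,j}(z) \;=\; \Pi_{U,Z}U_{j+1}\,\Gamma_{j+1}^U\,E_j^*(zI-A_j^{U,Z})^{-1}E_1R_B,
\]
which is exactly the claimed identity.

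There is essentially no obstacle: the result is a direct chaining of the three preceding lemmas, and all heavy lifting (existence of the block characteristic polynomial, the Keldysh-type resolvent identity, and the polynomial action formula) has already been done. The only subtlety worth flagging is that the invertibility of $\Gamma_2^U,\ldots,\Gamma_{j+1}^U$ (guaranteed by Assumptions~1, as discussed after their statement) is what makes both the telescoping and the application of Lemma~\ref{lem:keldysh-inverse} legitimate, so I would mention this explicitly before performing the cancellation.
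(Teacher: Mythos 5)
Your proof is correct and follows essentially the same route as the paper: the paper's proof simply cites the identity $\res_{B,j}(z)=\Upsilon_j^U R_B\,\Lambda^U(z)^{-1}$ from Equation~\eqref{eq:upsilon-res} and then substitutes Lemma~\ref{lem:keldysh-inverse}, which is exactly your chain once you note that your explicit computation $\Lambda^U(A)\circ B=\Upsilon_j^U R_B$ (via Lemma~\ref{lem:poly-lanczos} with $k=j$ and vanishing of $\Lambda^U(A_j^{U,Z})\circ(E_1R_B)$) is precisely how \eqref{eq:upsilon-res} was obtained. The telescoping cancellation of the $\Gamma_i^U$ and of $R_B$ is carried out correctly.
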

\begin{proof}
  The claim follows by writing 
  $\res_{B,j}(z) = \Upsilon_j^U R_B \Lambda^U(z)^{-1}$ as 
  in the proof of Lemma~\ref{lem:res}  (Equation~\eqref{eq:upsilon-res}), 
  and using Lemma~\ref{lem:keldysh-inverse} 
  for the inverse of $\Lambda^U(z)$.
\end{proof}
In the particular case of Galerkin projections, all the residuals 
are block collinear to the last block vector $U_{j+1}$, generated by 
the block Arnoldi procedure, and we recover the well-known 
expression  
$
  \res_{B,j}(z) = U_{j+1} 
  \Gamma_{j+1}^U 
   E_j^* (zI - A_{j}^{U})^{-1} E_1R_B
$ \cite{frommer2017block}.

\subsection{Block rational Krylov methods}

Let us generalize the results of Section~\ref{sec:error-poly}, 
and Section~\ref{sec:collinearity} 
to the case of rational Krylov subspaces. Given a set of shift parameters $\Sigma=\{\sigma_1,\dots, \sigma_{j-1}\}$, we introduce the block rational Krylov subspace
\begin{align*}
  \mathcal{RK}^\square_j(A, B, \Sigma):=&\bspan\{B, (\sigma_1I-A)^{-1}B,\dots,(\sigma_{j-1}I-A)^{-1}B\},
\end{align*} 
and the polynomial
$$
\varphi(z):= \prod_{\begin{smallmatrix}i=1\\ \sigma_i\ne \infty\end{smallmatrix}}^{j-1}(z-\sigma_i).$$	
Note that, the above subspace can be rewritten by means of the $\circ$ operator as
\begin{align*}
  \mathcal{RK}^\square_j(A, B, \Sigma)=&\{\varphi(A)^{-1} P(A)\circ B:\ P(\lambda)\in\mathbb C[\lambda]^{s\times s},\ \mathrm{deg}\ P(\lambda)< j\}.
\end{align*} 
When we write as subscript of the $\mathcal{RK}^\square$ symbol the cardinality of the set of shift parameters plus two, then we mean that two shifts are taken at infinity: 
\begin{align*}
  \mathcal{RK}^\square_{j+1}(A, B, \Sigma)&:=\mathrm{blockspan}\{B, AB, (\sigma_1I-A)^{-1}B,\dots,(\sigma_{j-1}I-A)^{-1}B\}\\
  &\ =\{\varphi(A)^{-1} P(A)\circ B:\ P(\lambda)\in\mathbb C[\lambda]^{s\times s},\ \mathrm{deg}\ P(\lambda)\le j\}.
\end{align*} 
In the following, we denote by $\U_j$ the block unitary basis of $\mathcal {RK}^\square_{j}(A, B, \Sigma),$ obtained with the rational block Arnoldi algorithm \cite{elsworth20}, and by $\Z_j$ a unitary basis of a $js$-dimensional subspace 
  of $\mathbb C^{n}$.    
  % \textcolor{red}{the block unitary basis of a $js$ dimensional subspace of $\mathbb C^{n\times s}$}.
  We now introduce
the Block Rational Arnoldi Decomposition (BRAD) 
  \cite{elsworth20}, 
  that is returned by the block rational Arnoldi procedure for 
building $\U_j$. A BRAD consists in a pair of
   $(j+1)s \times js$ 
block upper triangular matrices 
$\hbradu_{U,j}, \kbradu_{U,j}$ that satisfy
\begin{equation}\label{eq:rational-arnoldi-relation}
  A \U_{j+1} \kbradu_{U,j} = \U_{j+1} \hbradu_{U,j}. 
\end{equation}
In analogy to the polynomial case, we denote by $\hbrad_{U,j}$ and 
$\kbrad_{U,j}$ the $js \times js$ block upper Hessenberg 
matrices obtained by extracting the top $js$ rows 
from $\hbradu_{U,j}$ and $\kbradu_{U,j}$, 
respectively. Moreover, we keep the notation involving 
  $\Gamma_j^U, \Xi_{i,j}^U, \Phi_j^U$ to denote the blocks 
  of $\hbradu_{U,j}$. 
  Similarly to Section~2, we identify a set of general assumptions that will be 
used throughout this section.

\medskip 
\noindent \fbox{
  \parbox{.95\linewidth}{
    \begin{center}
      \textbf{Assumptions 2}
    \end{center}
\begin{enumerate}
\item The block rational Arnoldi procedure 
for building a block orthogonal basis of $\mathcal{RK}^\square_{j+1}(A,B,\Sigma)$
has been run
without encountering breakdown nor deflations, and the 
last pole is chosen at infinity. 
\item The block unitary bases $\Z_j, \U_j \in \mathbb{C}^{n \times js}$ 
are such that $\Z_j^* \U_j$ is invertible.\label{ass:ZjUj}
\item The shifts $\sigma_i$ are not eigenvalues of $A_j^{U,Z}$. 
\end{enumerate}}}
\medskip 
  
  Note that, in general, the matrix $A_j^U = \U_j^* A \U_j \neq \hbrad_{U,j}$, and both $A_j^U$ 
  and $A_j^{U,Z}=(\Z_j^*\U_j)^{-1}\Z_j^*A\U_j$ are not block upper Hessenberg.
  We remark that the order of the shift parameters used in the 
  rational block Arnoldi procedure is encoded in 
  the matrices $\hbradu_{U,j}$ and $\kbradu_{U,j}$; in particular, when 
  the last shift is chosen at infinity, the last block row 
  of $\kbradu_{U,j}$ is zero. We remark that, even if the last 
  shift is not infinity, we can always reduce to this case by
  a (cheap) orthogonal transformation that reorders the basis, see 
  \cite[Section~4]{berljafa2015generalized}, and 
  \cite[Section~4.1]{casulli2024efficient}.

To extend the results concerning the error and residual block polynomials to the rational Krylov setting, we rely on the well known relation:
\begin{align}
  \mathcal{RK}^\square_{j+1}(A, B, \Sigma)= \mathcal{K}^\square_{j+1}(A,\varphi(A)^{-1}B)\label{eq:k1}.
\end{align}
We remark that Assumptions~2 imply that we can run the block Arnoldi method with $A$  
and starting block vector $\varphi(A)^{-1}B$, for up to $j+1$ 
steps without breakdowns
nor deflations. Denote by 
$\V_j$ the unitary block
basis of $\mathcal K^\square_{j}(A, \varphi(A)^{-1}B),$  obtained via the block Arnoldi procedure, 
and let $A_{j}^{V,Z}$ be the corresponding block upper Hessenberg matrix. Finally, indicate by $S\in\mathbb C^{js\times js}$ the unitary matrix  such that $\U_j=\V_jS$. 

\upd{
We now prove the following generalizations of Lemma~\ref{lem:poly-lanczos}, and Lemma~\ref{lem:ortho-poly}. 
\begin{lemma}\label{lem:rat-lanczos}
	Let $P(\lambda)=\lambda^kP_k+\dots + \lambda P_1+P_0$ be an $s\times s$ matrix polynomial of degree $k$. Under Assumptions~\ref{ass:ZjUj}:
	$$
	\varphi(A)^{-1}P(A)\circ B=\begin{cases}
		\U_j\left[\varphi(A_j^{U,Z})^{-1}P(A_j^{U,Z})\circ(E_1 R_B)\right]& j>k\\[.2cm]
		\U_j\left[\varphi(A_j^{U,Z})^{-1}P(A_j^{U,Z})\circ(E_1 R_B)\right] + \Theta_j^UR_B P_j& j=k,
	\end{cases}
	$$
	where $\Theta_j^U= \Pi_{U,Z}U_{j+1}\Gamma_{j+1}^UM_j^U$, for a certain $M_j^U\in\mathbb C^{s\times s}$. If all poles in $\Sigma$ are different from $\infty$, then $M_j^U=E_j^TK_{U,j}^{-1}E_1$.
\end{lemma}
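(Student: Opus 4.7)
The plan is to reduce the claim to the polynomial block Arnoldi setting of
Lemma~\ref{lem:poly-lanczos} by exploiting the identity $\mathcal{RK}^\square_{j+1}(A, B, \Sigma) = \mathcal K^\square_{j+1}(A, \tilde B)$ with $\tilde B := \varphi(A)^{-1} B$.
Let $\V_j$ denote the unitary basis produced by the (polynomial) block Arnoldi applied to $(A, \tilde B)$, with initial factorization $\tilde B = V_1 \tilde R$ ($\tilde R$ invertible, because $\varphi(A)$ is invertible and $R_B$ is invertible under Assumptions~2), and let $S \in \mathbb C^{js \times js}$ be the unitary matrix with $\U_j = \V_j S$. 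This yields $A_j^{V,Z} = S A_j^{U,Z} S^*$, and $\Z_j^* \V_j = \Z_j^* \U_j S^*$ is invertible, so Assumptions~1 apply to the pair $(\V_j, \Z_j)$.

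Using $\varphi(A)^{-1} P(A) \circ B = P(A) \circ \tilde B$, I would apply Lemma~\ref{lem:poly-lanczos} to $P(A) \circ \tilde B$ in the basis $\V_j$: this produces $\V_j[P(A_j^{V,Z}) \circ (E_1 \tilde R)]$, plus the extra term $\Upsilon_j^V \tilde R P_j$ when $j = k$, with $\Upsilon_j^V = \Pi_{V,Z} V_{j+1} \Gamma_{j+1}^V \cdots \Gamma_2^V$. Translating to the rational basis via $\V_j = \U_j S^*$ and $A_j^{V,Z} = S A_j^{U,Z} S^*$ rewrites the main term as $\U_j [P(A_j^{U,Z}) \circ (S^* E_1 \tilde R)]$. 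The key identity that brings it into the claimed shape is
\[
  S^* E_1 \tilde R = \varphi(A_j^{U,Z})^{-1} E_1 R_B,
\]
which I would prove by applying Lemma~\ref{lem:poly-lanczos} to the scalar polynomial $\varphi(\lambda) I_s$ of degree at most $j-1 < j$: no extra term appears, so $B = \varphi(A) \tilde B = \U_j \varphi(A_j^{U,Z}) S^* E_1 \tilde R$; comparison with $B = \U_j E_1 R_B$ and orthonormality of $\U_j$ yield the identity. Since $\varphi(A_j^{U,Z})^{-1}$ commutes with every polynomial in $A_j^{U,Z}$, the main term collapses to $\U_j[\varphi(A_j^{U,Z})^{-1} P(A_j^{U,Z}) \circ (E_1 R_B)]$ in both cases.

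For the correction term arising when $j = k$, observe that $\mathrm{colspan}(\V_j) = \mathrm{colspan}(\U_j)$ and $\mathrm{colspan}(\V_{j+1}) = \mathrm{colspan}(\U_{j+1})$, so $\Pi_{V,Z} = \Pi_{U,Z} =: \Pi$, and both $V_{j+1}$ and $U_{j+1}$ parametrize the $s$-dimensional orthogonal complement of $\mathrm{colspan}(\U_j)$ inside $\mathrm{colspan}(\U_{j+1})$. Therefore $V_{j+1} = U_{j+1} T$ for a unitary $T \in \mathbb C^{s\times s}$, which rewrites $\Upsilon_j^V \tilde R$ as $\Pi U_{j+1} \Gamma_{j+1}^U M_j^U R_B$ with $M_j^U := (\Gamma_{j+1}^U)^{-1} T \Gamma_{j+1}^V \cdots \Gamma_2^V \tilde R R_B^{-1}$, establishing the existence of $\Theta_j^U$ in the claimed form.

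The main obstacle is the explicit identification $M_j^U = E_j^T K_{U,j}^{-1} E_1$ when all shifts in $\Sigma$ are finite. I would specialize the formula to $P(\lambda) = \lambda^j I_s$ and compute $A^j \tilde B$ directly: since $\varphi$ is monic of degree $j - 1$, polynomial division of $z^j$ by $\varphi(z)$ produces $A^j \tilde B = AB + w$ with $w \in \mathrm{colspan}(\U_j)$. Expanding $AB = A\U_j E_1 R_B$ through the BRAD $A\U_j K_{U,j} = \U_j H_{U,j} + U_{j+1} \Gamma_{j+1}^U E_j^T$, and using that $\Pi$ annihilates $\mathrm{colspan}(\U_j)$, the $\Pi U_{j+1}$-component of $A^j \tilde B$ equals $\Pi U_{j+1} \Gamma_{j+1}^U E_j^T K_{U,j}^{-1} E_1 R_B$. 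Comparison with $\Theta_j^U R_B = \Pi U_{j+1} \Gamma_{j+1}^U M_j^U R_B$, together with the full column rank of $\Pi U_{j+1}$ and invertibility of $\Gamma_{j+1}^U$ and $R_B$, forces $M_j^U = E_j^T K_{U,j}^{-1} E_1$.
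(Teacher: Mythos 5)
Your proof is correct, and for the main part of the statement it follows essentially the same route as the paper: reduce to the polynomial case via $\tilde B=\varphi(A)^{-1}B$ and the identity $\mathcal{RK}^\square_{j+1}(A,B,\Sigma)=\mathcal K^\square_{j+1}(A,\tilde B)$, apply Lemma~\ref{lem:poly-lanczos} in the basis $\V_j$, and use the same key identity $S^*E_1\tilde R=\varphi(A_j^{U,Z})^{-1}E_1R_B$, obtained exactly as in the paper by feeding $\varphi(\lambda)I_s$ (degree $\le j-1$) into Lemma~\ref{lem:poly-lanczos}. The only genuine divergence is in the final identification $M_j^U=E_j^TK_{U,j}^{-1}E_1$. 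The paper computes $U_{j+1}^*AB$ in two ways, invoking Lemma~\ref{lem:poly-lanczos} once more with the monic degree-$j$ polynomial $\lambda\varphi(\lambda)$ to land on $\Gamma_{j+1}^V\cdots\Gamma_2^VV_1^*\varphi(A)^{-1}B$. You instead specialize the already-established formula to $P(\lambda)=\lambda^jI_s$, note via polynomial division that $A^j\tilde B=AB+w$ with $w\in\mathrm{colspan}(\U_j)$, and read off the $\Pi_{U,Z}U_{j+1}$-component of $AB$ directly from the BRAD (using that the last block row of $\underline K_{U,j}$ vanishes because the last pole is at infinity); full column rank of $\Pi_{U,Z}U_{j+1}$ then pins down $M_j^U$. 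Both arguments are valid; yours has the small advantage of not needing a second invocation of Lemma~\ref{lem:poly-lanczos} with a degree-$j$ polynomial, while the paper's avoids the explicit polynomial division and works entirely at the level of projected quantities. You are also slightly more careful than the paper in carrying the unitary factor $T$ relating $V_{j+1}$ and $U_{j+1}$ rather than assuming $V_{j+1}=U_{j+1}$ without loss of generality.
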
	
\begin{proof}
	Let us denote by $\V_j$ the block basis generated via the Arnoldi procedure with starting block vector $\varphi(A)^{-1}B$; since the $(j+1)$th pole is chosen at infinity, without loss of generality, we assume that $U_{j+1}=V_{j+1}$. Moreover, let $S\in\mathbb C^{sj\times sj}$ be the orthogonal matrix such that $\V_j S =\U_j$. In addition, since $B = \varphi(A)\varphi(A)^{-1}B\in \mathcal K^\square_{j}(A,\varphi(A)^{-1}B)$, 
	in view of Lemma~\ref{lem:poly-lanczos} (using 
	$\varphi(\lambda)I_s$ as matrix polynomial) we have 
	\begin{align}\label{eq:varphi(h)}
		B&=\V_j\varphi(A_{j}^{V,Z})E_1V_1^*\varphi(A)^{-1}B = 	\U_j\varphi(A_j^{U,Z})S^{-1}E_1 V_1^*\varphi(A)^{-1}B\nonumber\\
		&\Longrightarrow S^{-1}E_1V_1^*\varphi(A)^{-1}B=\varphi(A_j^{U,Z})^{-1}\U_j^*B.
	\end{align}

	Then, by applying Lemma~\ref{lem:poly-lanczos} we have
	\begin{align*}
		\varphi(A)^{-1}&P(A)\circ B=P(A)\circ (\varphi(A)^{-1}B)\\ &= \V_j\left[P(A_j^{V,Z})\circ(E_1V_1^*\varphi(A)^{-1}B)\right] + \Pi_{VZ}V_{j+1}\Gamma_{j+1}^V\dots\Gamma_2^VE_1V_1^*\varphi(A)^{-1}BP_j,
	\end{align*}
	where  $P_j=0$ in the case $k<j$. Concerning the first addend we have that
	\begin{align*}
		\V_j P(A_j^{V,Z})\circ E_1V_1^*\varphi(A)^{-1}B
		&=\V_j S P(S^{-1}A_j^{V,Z}S)\circ S^{-1}E_1V_1^*\varphi(A)^{-1}B\\
		&=\U_jP(A_j^{U,Z})\circ \varphi(A_j^{U,Z})^{-1}E_1R_B,
	\end{align*}
	where in the last equality we used equation~\eqref{eq:varphi(h)}. 
	
	Note that $\Pi_{VZ}=\Pi_{U,Z}$, 
	 $V_{j+1}=U_{j+1}$, and $\Gamma_{j+1}^U=\Gamma_{j+1}^V$; since $B=U_1R_B$, setting $M_j^U=\Gamma_{j}^V\dots\Gamma_2^VE_1V_1^*\varphi(A)^{-1}U_1$ we get the first part of the second claim.
	 
	Finally, we show the identity $\Theta_j^U=\Pi_{VZ}V_{j+1}\Gamma_{j+1}^V\dots\Gamma_2^VE_1V_1^*\varphi(A)^{-1}BP_j$, under the assumptions that there are no poles at infinity in $\Sigma$.  The claim follows from
	\begin{align*}
		\Gamma_{j+1}^UE_j^TK_{U,j}^{-1}E_1 R_B &=U_{j+1}^*A\U_j E_1R_B=U_{j+1}^*AB 
		=U_{j+1}^*A\varphi(A)\varphi(A)^{-1}B 
    \\&=V_{j+1}^* A\varphi(A)\V_j\V_j^* \varphi(A)^{-1}B
		=\Gamma_{j+1}^V\dots \Gamma_{2}^VV_1^*\varphi(A)^{-1}B,
	\end{align*}
  where we applied 
  Lemma~\ref{lem:poly-lanczos} using
  that $\lambda\varphi(\lambda)$ is a monic polynomial
  of degree $j$.
\end{proof}}

\begin{lemma} \label{lem:ortho-poly-rat}
  Let $R^U \in \mathcal {RK}^\square_{j+1}(A, B, \Sigma) \cap \mathcal Z^\perp$ be a nonzero block vector. Under Assumptions 2, there exists a block characteristic polynomial $P^U(\lambda)$
  for $A_j^{U,Z}$ with respect to $E_1R_B$ such that
  $R^U = \varphi(A)^{-1}P^U(A) \circ B$. 
\end{lemma}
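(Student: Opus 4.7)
The plan is to mimic the proof of Lemma~\ref{lem:ortho-poly} almost verbatim, substituting Lemma~\ref{lem:rat-lanczos} for Lemma~\ref{lem:poly-lanczos} at the decisive step. Using the representation $\mathcal{RK}^\square_{j+1}(A,B,\Sigma)=\{\varphi(A)^{-1}P(A)\circ B : \deg P \leq j\}$ recalled just before the statement, I would first write the given block vector as $R^U = \varphi(A)^{-1} P^U(A) \circ B$ for some matrix polynomial $P^U$ of degree at most $j$ with (possibly zero) leading coefficient $P_j^U$.

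Applying Lemma~\ref{lem:rat-lanczos} then splits this identity as
\[
R^U = \mathbf U_j \bigl[ \varphi(A_j^{U,Z})^{-1} P^U(A_j^{U,Z}) \circ (E_1 R_B) \bigr] + \Theta_j^U R_B P_j^U,
\]
where, crucially, $\Theta_j^U = \Pi_{U,Z} U_{j+1} \Gamma_{j+1}^U M_j^U$ factors through the oblique projector $\Pi_{U,Z}$ and therefore satisfies $\Z_j^* \Theta_j^U = 0$. Left-multiplying the displayed identity by $\Z_j^*$ and using the orthogonality hypothesis $\Z_j^* R^U = 0$ yields $\Z_j^* \U_j \bigl[ \varphi(A_j^{U,Z})^{-1} P^U(A_j^{U,Z}) \circ (E_1 R_B) \bigr] = 0$. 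At this point I would invoke two invertibility facts: $\Z_j^* \U_j$ is invertible by Assumption~2, item~2, and $\varphi(A_j^{U,Z})$ is invertible because the shifts $\sigma_i$ are assumed not to be eigenvalues of $A_j^{U,Z}$ (Assumption~2, item~3). Combined, they force $P^U(A_j^{U,Z}) \circ (E_1 R_B) = 0$.

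To finish, I would rule out the degenerate case $P_j^U = 0$ by contradiction: if it held, then the displayed splitting together with the vanishing just obtained would give $R^U = 0$, contradicting the hypothesis $R^U \neq 0$. Hence $\deg P^U = j$, and $P^U(\lambda)$ is a block characteristic polynomial of $A_j^{U,Z}$ with respect to $E_1 R_B$, as required.

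The only step that genuinely departs from the polynomial case is the need to handle the extra factor $\varphi(A_j^{U,Z})^{-1}$; this is precisely what Assumption~2, item~3, is designed to accommodate, and I do not foresee any further obstacle. The remaining ingredients---in particular the identity $\Z_j^*\Pi_{U,Z}=0$ and the rational splitting of the action of a matrix polynomial on the Krylov basis---are already in place from the previous lemma.
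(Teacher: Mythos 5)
Your proposal is correct and follows essentially the same route as the paper: write $R^U=\varphi(A)^{-1}P^U(A)\circ B$, apply Lemma~\ref{lem:rat-lanczos}, kill the $\Theta_j^U$ term with $\Z_j^*$, and use the invertibility of $\Z_j^*\U_j\varphi(A_j^{U,Z})^{-1}$ to conclude. Your explicit contradiction argument ruling out $P_j^U=0$ is a welcome extra detail that the paper's (terser) proof leaves implicit.
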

\upd{\begin{proof}
 By definition of $\mathcal{RK}_{j+1}^\square(A,B,\Sigma)$, $R^U=\varphi(A)^{-1}P(A)\circ B$ for a certain $s\times s$ matrix polynomial $P(\lambda)$ of degree $j$. In view of Lemma~\ref{lem:rat-lanczos}, we can write
 $$
 R^U= \U_j \varphi(A_{j}^{U,Z})^{-1}P(A_j^{U,Z})\circ E_1R_B+ \Pi_{U,Z}U_{j+1}\Gamma_{j+1}^UM_j^UR_BP_j.
 $$	
 The condition $R^U\in\mathcal Z^\perp$ implies 
 $
 0=\Z_j^* R^U=\Z_j^*\U_j\varphi(A_{j}^{U,Z})^{-1}P(A_j^{U,Z})\circ E_1R_B,
 $
and since $\Z_j^*\U_j\varphi(A_j^{U,Z})^{-1}$ is an invertible	 matrix, this gives the claim.
\end{proof}}
As said,  $A_j^{U,Z}$ is not block upper Hessenberg in general. On the other hand, a matrix polynomial is a block characteristic polynomial for $A_j^{U,Z}$ with respect to $E_1 R_B$ if and only if it is a block characteristic polynomial for $\widehat H_{V,j}$ with respect to $E_1 V_1^* \varphi(A)^{-1}B$. Therefore, in view of Lemma~\ref{lem:unique-block-characteristic-poly}, we can claim that the 
monic block characteristic polynomials of $A_j^{U,Z}$ with respect to 
$E_1 R_B$ is unique, and
all the other block characteristic polynomials can be obtained by multiplying it on the right with an $s\times s$ matrix which will constitute the leading coefficient.%\footnote{We remark that this 
%argument is equivalent to proving that the pair $(A_j^{U,Z}, E_1 R_B)$ is controllable.}
% \textcolor{red}{Sembra che il risultato che valga sia: se $(A,B)$ è controllabile allora il block characteristic polynomial per $A$ rispetto a $B$ si caratterizza come in Lemma 2.7}

Similarly to the discussion at the beginning of Section~\ref{sec:error-poly}, we have that under Assumptions 2, the Petrov-Galerkin approximate solution satisfies:
\begin{equation}\label{eq:xbj-rat}
  \begin{split}
    X_B(z)\approx X_{B,j}(z)&:=\U_j (zI-A_j^{U,Z})^{-1}E_1 R_B=\varphi(A)^{-1}\chi_{B,j}(A, z)\circ B ,
  \end{split}
\end{equation}
for a certain $z$-dependent matrix polynomial $\chi_{B,j}(A, z)$ of degree at most $j-1$. The residual is then given by
\begin{align*}
  \res_{B,j}(z) = \gamma_{B,j}(A, z) \circ B, 
  \qquad 
  \gamma_{B,j}(\lambda, z) = I_s - (z - \lambda) \frac{\chi_{B,j}(\lambda, z)}{\varphi(\lambda)},
\end{align*}
so that, $\gamma_{B,j}(z, z) = I_s$. By means of Lemma~\ref{lem:ortho-poly-rat}, we have that $\gamma_{B,j}(A, z) \circ B= \varphi(A)^{-1}P^U(A, z)\circ B$ where $P^U(A, z)$ is  a block characteristic polynomial of $A_j^{U,Z}$ with respect to $E_1R_B$. Combining all previous observations leads to the following result.
\begin{theorem}\label{thm:residual-rat}
  Let $X_{B,j}(z)$ be the Petrov-Galerkin approximation defined in \eqref{eq:xbj-rat}, 
  and $\res_{B,j}(z)$  be the associated residual. Under Assumptions 2, we have 
  \begin{align}\label{eq:residual-rat}
    \res_{B,j}(z) &= \varphi(z)\varphi(A)^{-1}\left(\Lambda^U(A) \circ B\right) \Lambda^U(z)^{-1}, 
  \end{align}
  and $\Lambda^U(\lambda)$ is the monic block characteristic polynomial of $A_j^{U,Z}$ with respect to $E_1R_B$. 
\end{theorem}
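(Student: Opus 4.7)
The plan is to mirror the proof of Lemma~\ref{lem:res}, inserting at the appropriate point the extra factor $\varphi(\lambda)$ brought in by the rational structure. First, I would observe that $\res_{B,j}(z) = B - (zI - A) X_{B,j}(z)$ belongs to $\mathcal{RK}^\square_{j+1}(A, B, \Sigma)\cap\mathcal Z^\perp$: the orthogonality to $\mathcal Z$ is the Petrov--Galerkin condition, and the membership in $\mathcal{RK}^\square_{j+1}$ follows from $A\cdot\mathcal{RK}^\square_j(A,B,\Sigma)\subseteq\mathcal{RK}^\square_{j+1}(A,B,\Sigma)$, which is precisely what placing the last pole at infinity guarantees under Assumptions~2. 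Applying Lemma~\ref{lem:ortho-poly-rat} pointwise in $z$ then produces a $z$-dependent block characteristic polynomial $P^U(\lambda,z)$ of $A_j^{U,Z}$ with respect to $E_1 R_B$ such that $\res_{B,j}(z) = \varphi(A)^{-1} P^U(A, z) \circ B$. By the uniqueness part of Lemma~\ref{lem:unique-block-characteristic-poly}, transferred to $A_j^{U,Z}$ through the identification of its block characteristic polynomials with those of $\hbrad_{V,j}$ (as remarked just before the statement of Theorem~\ref{thm:residual-rat}), we can write $P^U(\lambda,z) = \Lambda^U(\lambda)\,\rho(z)$ for a unique $s \times s$ matrix-valued function $\rho(z)$.

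The remaining task is to identify $\rho(z)$. For this I would compare the above with the representation
$$
\res_{B,j}(z) = \gamma_{B,j}(A,z)\circ B = \varphi(A)^{-1}\bigl[\varphi(\lambda) - (z-\lambda)\chi_{B,j}(\lambda, z)\bigr]\big|_{\lambda = A}\circ B,
$$
noting that the bracketed expression is a bona fide matrix polynomial in $\lambda$ of degree $j$. A short auxiliary observation needed at this step asserts that the evaluation map $P(\lambda)\mapsto P(A)\circ B$ is injective on matrix polynomials of degree at most $j$: this follows by expanding each $A^k B$ in the orthonormal block basis $U_1,\ldots,U_{k+1}$ and observing that the resulting change of basis is block upper triangular with invertible diagonal blocks $\Gamma_{k+1}^U\cdots\Gamma_2^U R_B$, which are invertible under Assumptions~2. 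Applying this injectivity to the equality of block vectors obtained from the two representations of $\res_{B,j}(z)$ upgrades it to the polynomial-level identity $P^U(\lambda,z) = \varphi(\lambda) - (z-\lambda)\chi_{B,j}(\lambda,z)$.

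Evaluating at $\lambda=z$ then yields $P^U(z,z) = \varphi(z)I_s$. Comparing with $P^U(z,z) = \Lambda^U(z)\rho(z)$, and using that $\Lambda^U(z)$ is invertible for $z$ outside the spectrum of $A_j^{U,Z}$ (which is true by Assumptions~2 together with Lemma~\ref{lem:unique-block-characteristic-poly}, since $\Lambda^U$ is monic, hence regular), gives $\rho(z) = \varphi(z)\Lambda^U(z)^{-1}$. Substituting back into $\res_{B,j}(z) = \varphi(A)^{-1}(\Lambda^U(A)\circ B)\,\rho(z)$, and using that scalar functions of $A$ commute with powers of $A$ so that the factor $\varphi(z)$ can be pulled out to the left, produces exactly \eqref{eq:residual-rat}. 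I expect the main obstacle to be the injectivity step, as this is the only place where the argument passes from an equality of block vectors to an equality of matrix polynomials in $\lambda$; once this has been established, the rest of the proof is a direct evaluation at $\lambda = z$.
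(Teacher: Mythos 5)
Your proof is correct and follows essentially the same route as the paper's: invoke Lemma~\ref{lem:ortho-poly-rat}, factor $P^U(\lambda,z)=\Lambda^U(\lambda)\rho(z)$ by the uniqueness of the monic block characteristic polynomial, and determine $\rho(z)=\varphi(z)\Lambda^U(z)^{-1}$ from the normalization $P^U(z,z)=\varphi(z)I_s$ --- with the welcome addition that you make explicit the injectivity of $P\mapsto P(A)\circ B$ on degree-$\le j$ matrix polynomials, which the paper leaves implicit. One small caveat: in this section $U_1,\dots,U_{k+1}$ denote the \emph{rational} Arnoldi basis, so $A^kB$ does not expand block-triangularly in them; the injectivity instead follows from the fact that $\mathcal K^\square_{j+1}(A,B)=\varphi(A)\,\mathcal{RK}^\square_{j+1}(A,B,\Sigma)$ has full dimension $(j+1)s$ under Assumptions~2 (equivalently, one runs the block-triangular expansion in the polynomial Arnoldi basis of $\mathcal K^\square(A,B)$).
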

\begin{proof}
  The proof follows the same argument of the one of Lemma~\ref{lem:res}. In view of Lemma~\ref{lem:ortho-poly-rat} we can write
  $$
  \res_{B,j}(z) =\varphi(A)^{-1}P^U(A,z)\circ B,
  $$
  where the dependency on $z$ is only in $P^U$. By rewriting $P^U(\lambda, z)= \Lambda^U(\lambda)\rho(z)$ and imposing the condition $P^U(\lambda, \lambda)=\varphi(\lambda)$ we get the claim. %The formula for 
  %$\res_{C,j}(z)$ follows from an analogous argument.
\end{proof}
\upd{Finally, we provide a useful expression for the quantity $\phi(A)^{-1}\left(\Lambda^U(A)\circ B\right)$. The proof requires some technical intermediate results, and it is deferred to Section~\ref{sec:computational}.
\begin{corollary}\label{cor:final-formula}
	Under the assumptions of Theorem~\ref{thm:residual-rat},
  then
	$
	\varphi(A)^{-1}\Lambda^U(A)\circ B = \Pi_{U,Z} U_{j+1} \Gamma_{j+1}^U R_B.
	$	
\end{corollary}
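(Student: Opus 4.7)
The plan is to apply Lemma~\ref{lem:rat-lanczos} with $P(\lambda)=\Lambda^U(\lambda)$, the monic block characteristic polynomial of $A_j^{U,Z}$ with respect to $E_1R_B$. Since $\Lambda^U$ has degree exactly $j$ and leading coefficient $I_s$, the case $j=k$ of that lemma yields
\[
\varphi(A)^{-1}\Lambda^U(A)\circ B=\mathbf U_j\bigl[\varphi(A_j^{U,Z})^{-1}\Lambda^U(A_j^{U,Z})\circ(E_1R_B)\bigr]+\Theta_j^U R_B.
\]
By the defining property of the block characteristic polynomial, $\Lambda^U(A_j^{U,Z})\circ(E_1R_B)=0$; since $\varphi$ is a scalar polynomial and $\varphi(A_j^{U,Z})$ is invertible by Assumption~2, item~3, left-multiplication by $\varphi(A_j^{U,Z})^{-1}$ commutes with the $\circ$-action on $E_1 R_B$ and preserves the zero. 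Hence the first summand vanishes and
\[
\varphi(A)^{-1}\Lambda^U(A)\circ B=\Pi_{U,Z}U_{j+1}\Gamma_{j+1}^U M_j^U R_B,
\]
where $M_j^U\in\mathbb C^{s\times s}$ is the matrix from Lemma~\ref{lem:rat-lanczos}.

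The central task, and the principal obstacle, is then to show $M_j^U R_B=R_B$. The strategy is to exploit the hypothesis that the $(j+1)$-th pole is at infinity: under this condition the polynomial Arnoldi basis $\mathbf V$ of $\mathcal K^\square_{j+1}(A,\varphi(A)^{-1}B)$ can be chosen with $V_{j+1}=U_{j+1}$. Setting $\widetilde B=\varphi(A)^{-1}B$ so that $\varphi(A)^{-1}\Lambda^U(A)\circ B=\Lambda^U(A)\circ\widetilde B$, and projecting on $U_{j+1}$, one has $U_{j+1}^{*}(A^i\widetilde B)=0$ for $i<j$ because $A^i\widetilde B\in\mathcal K^\square_j(A,\widetilde B)=\mathcal{RK}^\square_j(A,B,\Sigma)\perp U_{j+1}$. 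Applying Lemma~\ref{lem:poly-lanczos} to the leading monomial $A^j\widetilde B$ and using $V_{j+1}=U_{j+1}$ yields $U_{j+1}^{*}(\Lambda^U(A)\circ\widetilde B)=\Gamma_{j+1}^V\cdots\Gamma_2^V\widetilde R_B$. Matching this with $\Gamma_{j+1}^U M_j^U R_B$ reduces the claim to a scalar identity on the BRAD coefficients, of the same flavour as the relation $\Gamma_{j+1}^U E_j^T K_{U,j}^{-1}E_1 R_B=\Gamma_{j+1}^V\cdots\Gamma_2^V V_1^*\varphi(A)^{-1}B$ derived at the end of the proof of Lemma~\ref{lem:rat-lanczos} for the case of only finite shifts in $\Sigma$, now extended to accommodate the last pole being infinite.

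The hard part is precisely this last extension: it demands a careful bookkeeping of the block-upper-Hessenberg structure of the pencil $(\hbradu_{U,j},\kbradu_{U,j})$ in the regime where the bottom block row of $\kbradu_{U,j}$ vanishes, as well as the identification of the normalization that relates the $\mathbf U$ and $\mathbf V$ bases. These technical manipulations, together with the linearization properties of the Hessenberg pencil relative to $\Lambda^U(\lambda)$, are naturally deferred to Section~\ref{sec:computational}, and they are what ultimately collapse $M_j^U R_B$ to $R_B$.
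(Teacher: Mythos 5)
Your opening reduction coincides with the paper's: apply Lemma~\ref{lem:rat-lanczos} with $P=\Lambda^U$, use $\Lambda^U(A_j^{U,Z})\circ(E_1R_B)=0$ to kill the projected term, and reduce the claim to identifying the constant $s\times s$ factor that multiplies $\Pi_{U,Z}U_{j+1}\Gamma_{j+1}^U$ on the right. But that constant \emph{is} the entire content of the corollary, and you never compute it. Your "matching" step only reproduces the defining relation $\Gamma_{j+1}^U M_j^U R_B=\Gamma_{j+1}^V\cdots\Gamma_2^V V_1^*\varphi(A)^{-1}B$ of $M_j^U$ (so it is circular), and the decisive identity is then declared a "technical manipulation deferred to Section~\ref{sec:computational}". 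That is a gap, not a proof.

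Moreover, the identity you set as the target, $M_j^U R_B=R_B$ for the \emph{monic} $\Lambda^U$, is not attainable. With $\Lambda^U$ monic and the last pole at infinity one gets $U_{j+1}^*\bigl(\varphi(A)^{-1}\Lambda^U(A)\circ B\bigr)=U_{j+1}^*AB=\Gamma_{j+1}^U E_j^*K_{U,j}^{-1}E_1R_B$, which is not $\Gamma_{j+1}^U R_B$ in general; in the purely polynomial case Lemma~\ref{lem:poly-lanczos} gives $\Lambda^U(A)\circ B=\Pi_{U,Z}U_{j+1}\Gamma_{j+1}^U\Gamma_j^U\cdots\Gamma_2^U R_B$, visibly off by $\Gamma_j^U\cdots\Gamma_2^U$. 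The paper closes the argument differently: Theorem~\ref{thm:rational-clenshaw}(4) computes, from the Hessenberg pencil $\lambda K_{U,j}-H_{U,j}$, the leading coefficient of the block characteristic polynomial with respect to $E_1$ as $(E_j^*K_{U,j}^{-1}E_1)^{-1}$, so that the relevant $\Lambda^U=R_B^{-1}P^{[j]}R_B$ has leading coefficient $R_B^{-1}(E_j^*K_{U,j}^{-1}E_1)^{-1}R_B$, and it is precisely this factor that cancels $M_j^U=E_j^*K_{U,j}^{-1}E_1$ in Lemma~\ref{lem:rat-lanczos}. (The corollary therefore holds for that normalization rather than the monic one announced in Theorem~\ref{thm:residual-rat}; the paper's statement is ambiguous on this point, but its downstream uses in Corollary~\ref{cor:f-res} are normalization-invariant.) Finally, the case of intermediate poles at infinity is not handled in the paper by "bookkeeping of the pencil when the bottom row of $\underline{K}_{U,j}$ vanishes", but by a continuity argument over the Riemann sphere; your sketch gives no mechanism for that step either.
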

}

\subsection{Collinearity of residuals in the rational case}
\label{sec:collinearity-rat}

The collinearity property of Section~\ref{sec:collinearity} extends 
to the rational Krylov case. 
%  Recall that, in view of the Block Rational Arnoldi Decomposition, we have the relation 
%  $A \U_{j+1} \kbradu_{U,j} = \U_{j+1} \hbradu_{U,j}$,
%
If the last 
shift is at infinity we have that 
$A_j^U = \U_j^* A \U_j = \hbrad_{U,j} \kbrad_{U,j}^{-1}$, and we can extend Theorem~\ref{thm:res-H}  as follows.
\begin{theorem} \label{thm:res-H-rat}
  Under the assumptions of Theorem~\ref{thm:residual-rat} we have
  \begin{equation}\label{eq:res-H-rat}
  \res_{B,j}(z) = \Pi_{U,Z}U_{j+1} 
  \Gamma_{j+1}^U 
  E_j^* K_{U,j}^{-1} (zI - A_j^{U,Z})^{-1} E_1R_B,
  \end{equation}
  where $\Gamma_{j+1}^U$ denotes  the $(j+1,j)$ block of $\hbradu_{U,j}$.
\end{theorem}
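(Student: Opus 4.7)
The plan is a direct computation: starting from the explicit form $X_{B,j}(z)=\U_j(zI-A_j^{U,Z})^{-1}E_1R_B$ in \eqref{eq:xbj-rat}, I will expand $(zI-A)X_{B,j}(z)$ using the rational Arnoldi decomposition and subtract from $B$. The key preparatory identity comes from the BRAD \eqref{eq:rational-arnoldi-relation}: since the last pole is chosen at infinity, the last block row of $\kbradu_{U,j}$ is zero, so reading the top $js$ rows of the decomposition gives $A\U_j K_{U,j} = \U_j H_{U,j} + U_{j+1}\Gamma_{j+1}^U E_j^*$. Because $K_{U,j}$ is invertible (non-deflation), this can be rewritten as $A\U_j = \U_j H_{U,j}K_{U,j}^{-1} + U_{j+1}\Gamma_{j+1}^U E_j^* K_{U,j}^{-1}$. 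Left-multiplying by $(\Z_j^*\U_j)^{-1}\Z_j^*$ yields $A_j^{U,Z} = H_{U,j}K_{U,j}^{-1} + (\Z_j^*\U_j)^{-1}\Z_j^* U_{j+1}\Gamma_{j+1}^U E_j^* K_{U,j}^{-1}$, equivalently
\[
  zI - H_{U,j}K_{U,j}^{-1} \;=\; (zI - A_j^{U,Z}) \;+\; (\Z_j^*\U_j)^{-1}\Z_j^* U_{j+1}\Gamma_{j+1}^U E_j^* K_{U,j}^{-1}.
\]

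I then compute $(zI-A)\U_j(zI-A_j^{U,Z})^{-1}E_1R_B$ by substituting the expression for $A\U_j$: the $\U_j$-contribution becomes $\U_j[zI - H_{U,j}K_{U,j}^{-1}](zI-A_j^{U,Z})^{-1}E_1R_B$, which by the displayed identity telescopes to $\U_j E_1 R_B + \U_j(\Z_j^*\U_j)^{-1}\Z_j^* U_{j+1}\Gamma_{j+1}^U E_j^* K_{U,j}^{-1}(zI-A_j^{U,Z})^{-1}E_1R_B$. Adding the remaining $-U_{j+1}\Gamma_{j+1}^U E_j^* K_{U,j}^{-1}(zI-A_j^{U,Z})^{-1}E_1R_B$ term and using $\U_j E_1 R_B = U_1 R_B = B$ yields
\[
  (zI-A)X_{B,j}(z) \;=\; B \;-\; \Pi_{U,Z}U_{j+1}\Gamma_{j+1}^U E_j^* K_{U,j}^{-1}(zI-A_j^{U,Z})^{-1}E_1R_B,
\]
after recognizing that $\U_j(\Z_j^*\U_j)^{-1}\Z_j^* - I = -\Pi_{U,Z}$. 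Substituting into \eqref{eq:resj} gives the claimed formula.

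I do not expect any serious obstacle; the argument is essentially bookkeeping, the decisive step being the emergence of the oblique projector $\Pi_{U,Z}$ when the two $U_{j+1}$-terms combine. A stylistic alternative that parallels the polynomial proof of Theorem~\ref{thm:res-H} would combine Theorem~\ref{thm:residual-rat} with Corollary~\ref{cor:final-formula} to write $\res_{B,j}(z) = \varphi(z)\Pi_{U,Z}U_{j+1}\Gamma_{j+1}^U R_B\,\Lambda^U(z)^{-1}$, and then invoke a pencil version of Keldysh's theorem from Section~\ref{sec:computational} to establish the identity $\varphi(z)R_B\,\Lambda^U(z)^{-1} = E_j^* K_{U,j}^{-1}(zI-A_j^{U,Z})^{-1}E_1R_B$; in this route, the main technical content would be verifying that the pencil $(H_{U,j},K_{U,j})$ is, up to the factor $\varphi$, a linearization of $\Lambda^U$.
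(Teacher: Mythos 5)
Your proof is correct, and it takes a genuinely different --- and more elementary --- route than the paper's. The paper derives \eqref{eq:res-H-rat} from Theorem~\ref{thm:residual-rat}: it rewrites the residual as the polynomial Krylov residual for the system $(zI-A)X=\varphi(A)^{-1}B$ via the identification $\mathcal{RK}^\square_{j+1}(A,B,\Sigma)=\mathcal K^\square_{j+1}(A,\varphi(A)^{-1}B)$, applies Theorem~\ref{thm:res-H}, changes basis from $\V_j$ to $\U_j$, and is then left with the factor $\varphi(z)(zI-A_j^{U,Z})^{-1}\varphi(A_j^{U,Z})^{-1}$, which it collapses to $(zI-A_j^{U,Z})^{-1}$ through a partial-fraction/resolvent expansion combined with the orthogonality relations $E_j^*K_{U,j}^{-1}\varphi_i(A_j^{U,Z})^{-1}E_1R_B=U_{j+1}^*A\varphi_i(A)^{-1}B=0$. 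Your argument bypasses the block-characteristic-polynomial machinery entirely: it uses only the truncated BRAD $A\U_jK_{U,j}=\U_jH_{U,j}+U_{j+1}\Gamma_{j+1}^UE_j^*$ (valid because the last block row of $\kbradu_{U,j}$ vanishes when the last pole is at infinity), the invertibility of $K_{U,j}$ (guaranteed by the absence of breakdown/deflation, and in any case presupposed by the statement since $K_{U,j}^{-1}$ appears in \eqref{eq:res-H-rat}), and the identity $A_j^{U,Z}=H_{U,j}K_{U,j}^{-1}+(\Z_j^*\U_j)^{-1}\Z_j^*U_{j+1}\Gamma_{j+1}^UE_j^*K_{U,j}^{-1}$, which is the rational analogue of \eqref{eq:upper-hess}; the telescoping is exactly as you describe and the oblique projector $\Pi_{U,Z}$ emerges correctly when the two $U_{j+1}$ terms combine. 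What the paper's longer route buys is that it keeps the link with $\Lambda^U(z)$ explicit, which is needed elsewhere (Corollary~\ref{cor:f-res}); what your route buys is a short, self-contained derivation independent of Theorem~\ref{thm:residual-rat}, Lemma~\ref{lem:ortho-poly-rat} and Theorem~\ref{thm:res-H}, and, combined with Theorem~\ref{thm:residual-rat}, it would yield Corollary~\ref{cor:final-formula} essentially for free. Your sketched alternative via a pencil Keldysh identity is essentially the viewpoint of Theorem~\ref{thm:rational-clenshaw} in Section~\ref{sec:computational}, so it would also work, but the direct computation is preferable.
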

\begin{proof}
In view of Theorem~\ref{thm:residual-rat}, we have that the residual 
can be written in the form 
$\res_{B,j}(z) =\varphi(z) [\Lambda^U(A) \circ \varphi(A)^{-1} B] \Lambda^U(z)^{-1}$. 
As already shown in the proof of Lemma~\ref{lem:ortho-poly-rat}, $\Lambda^U(z)$ is also the monic block characteristic
polynomial of $A_{j}^{V,Z}$ with respect to $E_1V_1^*\varphi(A)^{-1}B$.

Therefore, 
we have that $[\Lambda^U(A) \circ \varphi(A)^{-1} B] \Lambda^U(z)^{-1}$ is the
Petrov-Galerkin residual for the linear system $(zI - A) X = \varphi(A)^{-1}B$ 
associated with the polynomial Krylov subspace approximation, which according to Theorem~\ref{thm:res-H} 
can be written as 
\[
(I - \U_j (\Z_j^* \U_j)^{-1} \Z_j^*)V_{j+1} 
\Gamma_{j+1}^V 
 E_j^* (zI - \widehat H_{V,j})^{-1} \V_j^*\varphi(A)^{-1}B, 
\]
where $\Gamma_{j+1}^V$ is the $(j+1,j)$-block of 
$\underline H_{V,j}$, and we used that $(I - \U_j (\Z_j^* \U_j)^{-1} \Z_j^*) = 
(I - \V_j (\Z_j^* \V_j)^{-1} \Z_j^*)$ as the projector does not 
depend on the choice of basis. Since $\U_j$ and $\V_j$ span the same 
subspace, and ---since the last pole 
is $\infty$--- the same holds for $U_{j+1}$ and $V_{j+1}$, we have the existence of unitary matrices $S, T$ such that
$\U_j = \V_j S$, and $U_{j+1} = V_{j+1} T$. Further,  $A_{j}^{U,Z} = S^{-1} A_{j}^{V,Z} S$, and 
$\Gamma_{j+1}^U = T^{-1} \Gamma_{j+1}^V$.
This implies 
\begin{align*}
\res_{B,j}(z) &= \varphi(z) \Pi_{U,Z}V_{j+1} 
\Gamma_{j+1}^V 
 E_j^* (zI - A_{j}^{V,Z})^{-1} \V_j^*\varphi(A)^{-1}B \\ 
 &= \varphi(z) \Pi_{U,Z} V_{j+1} 
 E_{j+1}^* \underline{H}_{V,j} S (zI - A_j^{U,Z})^{-1} S^{-1} \V_j^* \varphi(A)^{-1}B \\
&= \varphi(z) \Pi_{U,Z} U_{j+1} 
E_{j+1}^* \hbradu_{U,j} K_{U,j}^{-1} (zI - A_j^{U,Z})^{-1} \varphi(A_j^{U,Z})^{-1} E_1 R_B \\
 &= \Pi_{U,Z} U_{j+1} 
\Gamma_{j+1}^U E_{j}^* K_{U,j}^{-1} \varphi(z)(zI - A_j^{U,Z})^{-1} \varphi(A_j^{U,Z})^{-1} E_1 R_B, 
\end{align*}
where we have used the identity in \eqref{eq:varphi(h)}.

We now assume without loss of generality that all $\sigma_i \neq \infty$ for $i = 1, \ldots, j - 1$. By means of the resolvent properties, we can write 
\begin{align*}
\varphi(z)(zI - A_j^{U,Z})^{-1} &\varphi(A_j^{U,Z})^{-1} =\varphi(z)(zI - A_j^{U,Z})^{-1} (A_j^{U,Z} -\sigma_1 I)^{-1}\prod_{i=2}^{j-1}(A_j^{U,Z} -\sigma_i I)^{-1} \\
 &=[(zI-A_j^{U,Z})^{-1}-(A_j^{U,Z} -\sigma_1 I)^{-1}]\prod_{i=2}^{j-1}(z-\sigma_i)(A_j^{U,Z} -\sigma_i I)^{-1}.
\end{align*}
By induction
$
\varphi(z)(zI - A_j^{U,Z})^{-1} \varphi(A_j^{U,Z})^{-1}=(zI - A_j^{U,Z})^{-1}-\sum_{i=1}^{j-2}\varphi_{i+1}(z)\varphi_{i}(A_j^{U,Z})^{-1},
$
with $\varphi_i(z):=\prod_{h=i}^{j-1}(z-\sigma_h)$. Note that $E_j^* K_{U,j}^{-1}=U_{j+1}^*A\U_j$, and $\varphi_{i}(A_j^{U,Z})^{-1}E_1R_B$ corresponds to the projection of $\varphi_{i}(A)^{-1}B$ onto $\bspan(\U_j)$, for $i=1,\dots,j-1$, because $\varphi_{i}(A)^{-1}B$ is contained in $\bspan(\U_{j})$. Therefore,
\begin{align*}
E_j^* K_{U,j}^{-1}\varphi_{i}(A_j^{U,Z})^{-1}E_1R_B &= U_{j+1}^*A \U_j\U_j^* \varphi_{i}(A)^{-1}B
=U_{j+1}^*A \varphi_{i}(A)^{-1}B=0,
\end{align*}
where the last equality follows from the fact that $A \varphi_{i}(A)^{-1}B\in\bspan(\U_j)$, by definition. In particular, the claim follows from 
$$E_j^* K_{U,j}^{-1}\left(\sum_{i=1}^{j-2}\varphi_{i+1}(z)\varphi_{i}(A_j^{U,Z})^{-1}\right)E_1R_B=0,
\qquad \forall z\in\mathbb C.$$
% and this yields the claim. 
\end{proof}
\begin{remark}
In the Galerkin case, the formula in the statement of Theorem~\ref{thm:res-H-rat} boils down to
$
\res_{B,j}(z) = U_{j+1} 
\Gamma_{j+1}^U 
E_j^* K_{U,j}^{-1} (zI -  A_j^U)^{-1} E_1R_B.
$
\end{remark}

\subsection{A formula for the moment matching approximation error of transfer functions}

In this section we show  how to employ Theorem~\ref{thm:residual-rat} to retrieve closed formulas for the approximation error
of the transfer function of a linear time independent linear system (LTI). 
More precisely, we consider an LTI of the form 
\begin{equation} \label{eq:lti-ss}
  \begin{cases}
    x'(t) = Ax(t) + Bu(t) \\ 
    y(t) = Cx(t)
  \end{cases},\qquad x:\mathbb R \rightarrow \mathbb R^n,\qquad u:\mathbb R \rightarrow \mathbb R^s,\qquad C\in\mathbb C^{n\times s}.
\end{equation}
In this context, 
a crucial role is played by the  \emph{transfer function}, defined as
\begin{equation}\label{eq:trasfer}
  G(z):= C^*(zI-A)^{-1}B.
\end{equation}
The latter is an $s\times s$ matrix-valued rational function describing the input/output relation of the system in the frequency domain; more precisely we have $Y(z) = G(z)\cdot U(z)$, where $Y(z)$, and $U(z)$ are the Laplace transforms of $y(t)$, and $u(t)$, respectively~\cite{antoulas2001approximation}.

 Many model order reduction approaches for \eqref{eq:lti-ss} consist in designing reduced order models whose transfer function approximate \eqref{eq:trasfer} on a domain of interest. For instance, in the moment matching method one  considers a pair of rational block Krylov subspaces  $\mathcal{RK}^\square_j(A, B, \Sigma)$, and $\mathcal{RK}^\square_j(A^*, C, \Psi)$ ---with $\Psi:=\{\psi_1,\dots,\psi_{j-1}\}$---  and extract an approximation of \eqref{eq:trasfer} of the form 
$$
\widetilde G(z):= C^*X_{B,j}(z)=X_{C,j}(z)^*B,
$$
where $X_{B,j}(z),X_{C,j}(z)$ are the Petrov-Galerkin approximations, 
%defined as in \eqref{eq:XBjarnoldi}, 
of the linear systems $(zI-A)X=B$, and $(zI-A^*)X=C$,  with respect to the aforementioned subspaces. 

When using rational Krylov subspaces, the rational matrix-valued function
$\widetilde G(z)$ interpolates $G(z)$ at the shifts, i.e., for $z \in\Sigma\cup\Psi$.
When the subspaces are polynomial Krylov subspaces, \upd{the Markov parameters are matched.\footnote{\upd{The Markov parameters of the transfer function $G(z)$ are the coefficients of the Taylor expansion of $G(z)$ at infinity \cite[Chapter 11]{antoulas2001approximation}.}}}

Let us provide an explicit error formula
for the interpolation error of the moment matching approach.
From the definitions of $X_{B,j}(z),\res_{B,j}(z),X_{C,j}(z)$, and $\res_{C,j}(z)$ we easily get the relations:
\begin{align*}
  C^*(zI-A)^{-1}B-C^*X_{B,j}(z)&=C^*(zI-A)^{-1}\res_{B,j}(z),\\
  C^*(zI-A)^{-1}&=\res_{C,j}(z)^*(zI-A)^{-1}+X_{C,j}(z)^*,
\end{align*}
so that the approximation error satisfies
\begin{equation*}
  \begin{split}
    G(z)-\widetilde G(z)&= C^*(zI-A)^{-1}B-C^*X_{B,j}(z)= C^*(zI-A)^{-1}\res_{B,j}(z)\\&=(\res_{C,j}(z)^*(zI-A)^{-1}+X_{C,j}(z)^*)\res_{B,j}(z) \\
    &=\res_{C,j}(z)^*(zI-A)^{-1}\res_{B,j}(z).
  \end{split}
\end{equation*}
Since $\widetilde G(z)$ only depends on the chosen pairs of subspaces, 
and is invariant with respect to the specific choice of bases, by applying our results on the expression of the residuals, we get the following formula for $G(z)-\widetilde G(z)$.
\begin{theorem}\label{thm:transfer}
  Under Assumptions 2, the error $E(z):= G(z)-\widetilde G(z)$ verifies
  \begin{align}\label{eq:remainder-transfer}
    E(z)&=\tau(z)\Lambda^Z(z)^{-*}[{\Lambda^Z}(A^*) 
    \circ C]^*\Theta(A, z)[\Lambda^U(A) \circ B] \Lambda^U(z)^{-1},
  \end{align}
  where $$
  \Theta(A, z) := (zI-A)^{-1}\tau(A)^{-1},
  \qquad 
  \tau(z):=\prod_{\psi_i\ne \infty} (z-\psi_i)\cdot\prod_{\sigma_h\ne \infty}(z-\sigma_h),
  $$
 $\Lambda^U(z)$ and 
 $\Lambda^Z(z)$ are 
 the monic block characteristic polynomials of $A_j^{U,Z}$ and $(A_j^{Z,U})^*$ with respect to $E_1R_B$, and $E_1R_C$.
Moreover, an equivalent characterization of the error function is
\begin{align}\label{eq:remainder-transfer2}
  E(z)&=R_C^*E_1^*(zI - (A^*)_j^{Z,U})^{-*} K_{Z,j}^{-*}E_j ( \Gamma_{j+1}^Z)^*Z_{j+1} \Pi_{U,Z}\nonumber\\ &\cdot (zI-A)^{-1}\Pi_{U,Z}U_{j+1} 
   \Gamma_{j+1}^U 
  E_j^* K_{U,j}^{-1} (zI - A_j^{U,Z})^{-1} E_1R_B.
\end{align}
  \end{theorem}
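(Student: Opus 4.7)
The starting point is the identity
$E(z) = \res_{C,j}(z)^*(zI - A)^{-1}\res_{B,j}(z)$
derived immediately before the theorem statement. The plan is to substitute the two closed-form expressions for the residual provided by Theorem~\ref{thm:residual-rat} and Theorem~\ref{thm:res-H-rat}, respectively, to obtain the two formulas \eqref{eq:remainder-transfer} and \eqref{eq:remainder-transfer2}. Both derivations are essentially parallel: once the appropriate residual formula is inserted on each side of $(zI-A)^{-1}$, the remainder is bookkeeping.

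For \eqref{eq:remainder-transfer}, Theorem~\ref{thm:residual-rat} applied to the primal system gives
$\res_{B,j}(z) = \varphi(z)\varphi(A)^{-1}[\Lambda^U(A)\circ B]\Lambda^U(z)^{-1}$.
I would then apply the same theorem to the dual family of shifted linear systems $(zI - A^*)X = C$, approximated in $\mathcal{RK}^\square_j(A^*, C, \Psi)$ with test space $\bspan(\U_j)$, which yields an analogous expression for $\res_{C,j}(z)$ involving $\psi(z)$, $\psi(A^*)^{-1}$, and the monic block characteristic polynomial of the corresponding projection of $A^*$ with respect to $E_1 R_C$. Taking the Hermitian adjoint produces
$\res_{C,j}(z)^* = \psi(z)\,\Lambda^Z(z)^{-*}[\Lambda^Z(A^*)\circ C]^*\psi(A)^{-1}$,
where $\Lambda^Z$ is identified with the monic block characteristic polynomial of $(A_j^{Z,U})^*$ using that this matrix is similar to $(A^*)_j^{Z,U}$, so the two share the same monic block characteristic polynomial up to a blockwise conjugate-transposition of coefficients. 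Substituting both residual expressions into the identity for $E(z)$ and exploiting that every polynomial of $A$ commutes with $(zI-A)^{-1}$, the central block $\varphi(A)^{-1}(zI-A)^{-1}\psi(A)^{-1}$ collapses to $\Theta(A,z)$, while $\varphi(z)\psi(z) = \tau(z)$ by construction; this gives \eqref{eq:remainder-transfer}.

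For \eqref{eq:remainder-transfer2} the strategy is identical, but Theorem~\ref{thm:res-H-rat} replaces Theorem~\ref{thm:residual-rat}. The primal residual produces the right factor of \eqref{eq:remainder-transfer2} verbatim. For the dual residual, applying Theorem~\ref{thm:res-H-rat} with $\U_j$ and $\Z_j$ swapped introduces the projector $\Pi_{Z,U}$, and taking the adjoint yields $\Pi_{U,Z} = \Pi_{Z,U}^*$, together with the terms $Z_{j+1}^*$, $(\Gamma_{j+1}^Z)^*$, $K_{Z,j}^{-*}$ and $(zI-(A^*)_j^{Z,U})^{-*}$ that appear in the left factor of \eqref{eq:remainder-transfer2}. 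The main obstacle in both derivations is careful bookkeeping for the dual problem: in particular, verifying that the block characteristic polynomial, projector, rational Arnoldi pencil and resolvent read off from the two residual theorems can be rewritten in terms of $\U_j$, $\Z_j$ exactly as stated, and that the scalar prefactors coming from the adjoint polynomial $\psi(z)$ combine correctly with $\varphi(z)$ to produce $\tau(z)$.
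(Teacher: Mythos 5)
Your proposal is correct and takes essentially the same route as the paper: the identity $E(z)=\res_{C,j}(z)^*(zI-A)^{-1}\res_{B,j}(z)$ established just before the theorem is combined with Theorem~\ref{thm:residual-rat} (for \eqref{eq:remainder-transfer}) and Theorem~\ref{thm:res-H-rat} (for \eqref{eq:remainder-transfer2}) applied to both the primal and the dual residuals, with the same bookkeeping you describe (commuting polynomials of $A$ past the resolvent so that $\varphi(z)\psi(z)=\tau(z)$ and the central factor collapses to $\Theta(A,z)$, and $\Pi_{Z,U}^*=\Pi_{U,Z}$ for the adjoint of the dual residual).
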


The above \upd{formulas} are interesting as they can provide some insights about the 
convergence of the approximation; however, we remark that their direct 
evaluation  would be as costly as 
explicitly computing $G(z) - \widetilde G(z)$.  Surrogate of the error functions might be obtained by combining \eqref{eq:remainder-transfer} and/or \eqref{eq:remainder-transfer2} 
with some heuristics that avoid the evaluation of the resolvent in the core of the two formulas; for instance, see \cite[Section 4]{lin2021transfer} for the single input single output case.

\section{Matrix functions approximation}
\label{sec:matfun}

This section is devoted to providing novel error formulas, and computable a posteriori error bounds for the 
approximation of $f(A)B$ by means of a block rational Krylov subspace.
%In particular, we show that Theorem~\ref{thm:residual-rat} and Theorem~\ref{thm:res-H-rat} lead us to extensions of the Walsh-Hermite formula that express the remainder
%of a block rational Krylov approximation for a quantity of the form $f(A)B$.
We remark that when $s = 1$ the Galerkin approximation of $f(A)b$ onto a rational Krylov method, 
yields $g(A)b$ where $g(z)$ is a rational function interpolating
$f(z)$ at the Ritz values \cite[Theorem~3.3]{guttel2013rational}. 
The argument does not extend 
trivially to the case $s > 1$.
\subsection{Interpolation error formulas for matrix functions}

Let us consider the Petrov-Galerkin approximation $F_j \approx f(A)B$, with respect to the $\bspan$ of $\U_j$ and $\Z_j$,  
 defined by
$
  F_j:= \U_jf(A_j^{U,Z})E_1R_B. 
$

Using the Cauchy integral representations of $f(A)B$, and Theorem~\ref{thm:residual-rat} we get
\begin{align}
f(A)B-F_j&=\frac 1{2\pi\mathbf i}\int_{\partial \Omega}\left[(zI-A)^{-1}B-\U_j(zI-A_j^{U,Z})^{-1}E_1R_B\right]f(z)dz\nonumber\\
&=\frac 1{2\pi\mathbf i}\int_{\partial \Omega}(zI-A)^{-1}\res_{B,j}(z) f(z)dz \label{eq:f(a)res}
\end{align}
that involves the residual map.
It is natural to replace $\res_{B,j}(z)$ by either \eqref{eq:residual-rat} or \eqref{eq:res-H-rat}. By using the former equation we get the following result.
\begin{corollary} \label{cor:f-res}
Under the assumptions of Theorem~\ref{thm:residual-rat}, if 
$\Lambda^U(z)$ has simple and finite eigenvalues then 
\begin{align}\label{eq:cauchy-int}
  f(A) B - F_j &= \sum_{i=1}^{ds} \left[
  \varphi(A) f(A)   - \varphi(\theta_i) f(\theta_i) I \right] 
  (A - \theta_i I)^{-1} 
  \varphi(A)^{-1} (\Lambda^U(A) \circ B) v_i w_i^* \nonumber \\ 
  &\upd{= \sum_{i=1}^{ds} \left[
  \varphi(A) f(A)   - \varphi(\theta_i) f(\theta_i) I \right] 
  (A - \theta_i I)^{-1} 
  \Pi_{U,Z}U_{j+1}\Gamma_{j+1}^UR_B v_i w_i^*}
\end{align}
where $v_i, w_i$ are right and left eigenvectors of $\Lambda^U(z)$
associated with 
$\theta_i$, normalized such that $w_i^* (\Lambda^U)'(\theta_i) v_i = 1$.
\end{corollary}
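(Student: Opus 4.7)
The plan is to start from the Cauchy integral representation of the error given in \eqref{eq:f(a)res}, substitute the closed-form expression of the residual $\res_{B,j}(z)$ provided by Theorem~\ref{thm:residual-rat}, and then reduce the resulting matrix-valued contour integral to a finite sum over the eigenvalues of $\Lambda^U(z)$ via Keldysh's theorem. More precisely, assuming that the contour $\partial \Omega$ encloses both the spectrum of $A$ and the (simple, finite) eigenvalues $\theta_1,\ldots,\theta_{js}$ of $\Lambda^U(z)$, I would substitute the identity
\begin{equation*}
\res_{B,j}(z) = \varphi(z)\varphi(A)^{-1}(\Lambda^U(A)\circ B)\Lambda^U(z)^{-1}
\end{equation*}
into \eqref{eq:f(a)res} and pull the $z$-independent matrix factors out of the integral.

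Next, I would expand $\Lambda^U(z)^{-1}$ in partial fractions using Keldysh's theorem as in Lemma~\ref{lem:keldysh-inverse}, obtaining
\begin{equation*}
\Lambda^U(z)^{-1} = \sum_{i=1}^{js} \frac{v_i w_i^*}{z - \theta_i},
\end{equation*}
with the normalization $w_i^*(\Lambda^U)'(\theta_i)v_i = 1$. Interchanging the sum and the integral, the error splits as a finite sum of terms, each of which contains the scalar contour integral
\begin{equation*}
\frac{1}{2\pi\mathbf i}\int_{\partial\Omega} \frac{\varphi(z) f(z)}{z - \theta_i}\,(zI - A)^{-1}\,dz.
\end{equation*}

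To evaluate this, I would use the resolvent-type partial fraction identity
\begin{equation*}
\frac{(zI - A)^{-1}}{z - \theta_i} \;=\; (\theta_i I - A)^{-1}\!\left[\frac{I}{z - \theta_i} - (zI - A)^{-1}\right],
\end{equation*}
which reduces the integral to the two standard pieces: Cauchy's formula gives $\varphi(\theta_i) f(\theta_i)$ for the first, and the Cauchy integral definition of matrix functions gives $\varphi(A) f(A)$ for the second. The combined result reads
\begin{equation*}
\frac{1}{2\pi\mathbf i}\int_{\partial\Omega} \frac{\varphi(z) f(z)}{z - \theta_i}\,(zI - A)^{-1}\,dz = (A - \theta_i I)^{-1}\bigl[\varphi(A)f(A) - \varphi(\theta_i)f(\theta_i) I\bigr],
\end{equation*}
where I used that $(\theta_i I - A)^{-1}$ commutes with the bracketed expression. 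Substituting back and using that $\varphi(A)^{-1}$ commutes with $(A - \theta_i I)^{-1}$ yields the first identity of \eqref{eq:cauchy-int}. The second identity follows immediately by invoking Corollary~\ref{cor:final-formula}, which identifies $\varphi(A)^{-1}(\Lambda^U(A)\circ B)$ with $\Pi_{U,Z} U_{j+1}\Gamma_{j+1}^U R_B$.

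The main obstacle is purely bookkeeping: verifying that one can legitimately exchange the finite sum with the contour integral (immediate, since the sum is finite), and making sure that $\Omega$ contains the Ritz values $\theta_i$ as well as the spectrum of $A$, so that all residue-type computations are consistent. The algebraic manipulations with the resolvent identity and the commutation of $\varphi(A)$ with polynomials in $A$ are routine.
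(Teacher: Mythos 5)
Your proof is correct and follows essentially the same route as the paper: substitute the residual formula of Theorem~\ref{thm:residual-rat} into \eqref{eq:f(a)res}, expand $\Lambda^U(z)^{-1}$ via Keldysh's theorem as in Lemma~\ref{lem:keldysh-inverse}, evaluate the resulting contour integrals by residues, and finish with Corollary~\ref{cor:final-formula}. The only (immaterial) difference is that you evaluate $\frac{1}{2\pi\mathbf i}\int_{\partial\Omega}\frac{\varphi(z)f(z)}{z-\theta_i}(zI-A)^{-1}\,dz$ by hand via the resolvent partial-fraction identity, whereas the paper invokes a generalized Cauchy integral formula for matrix functions; both yield the same expression $\left[\varphi(A)f(A)-\varphi(\theta_i)f(\theta_i)I\right](A-\theta_iI)^{-1}$.
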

\begin{proof}
Let $\Omega$ be a complex domain enclosing the spectra of $A$ and of 
$P(z)$, such that $\partial \Omega$ 
is a finite union of rectifiable Jordan curves.
Plugging \eqref{eq:residual-rat} in \eqref{eq:f(a)res} yields:
$$
f(A) B - F_j=\frac 1{2\pi\mathbf i}\int_{\partial \Omega}(zI-A)^{-1}\varphi(A)^{-1}(\Lambda^U(A)\circ B)\Lambda^U(z)^{-1}\varphi(z)f(z)dz.
$$
As in the proof of Lemma~\ref{lem:keldysh-inverse}, we write 
$
\Lambda^U(z)^{-1} = \sum_{i = 1}^{ds} \frac{1}{z - \theta_i}
v_i w_i^* 
$
where $v_i, w_i$ are right and left eigenvectors associated with 
$\theta_i$, normalized as in the statement. 
Replacing the latter 
in the integral formula, and denoting by $g(z):=f(z)\varphi(z)$, yields 
\begin{align*}
f(A)B-F_j&=\frac{1}{2 \pi i} \int_{\Gamma} (zI - A)^{-1} g(z) \varphi(A)^{-1}\left(\Lambda^U(A)\circ B\right) \Lambda^U(z)^{-1} dz 
\\ &=
\sum_{i=1}^{ds} \frac{1}{2 \pi i} \int_{\Gamma} (zI - A)^{-1} g(z) \varphi(A)^{-1}\left(\Lambda^U(A)\circ B\right) \frac{v_i w_i^*}{z - \theta_i}
dz.
\end{align*}
The function $g(z) / (z - \theta_i)$ has a pole at $z = \theta_i$.
By applying 
\cite[Theorem~4.1]{decaybounds}, that is a generalized version 
of Cauchy's integral formula for matrix functions, we obtain 
\begin{align*}
f(A)B-F_j&=\sum_{i=1}^{ds} \frac{1}{2 \pi i} \int_{\Gamma} (zI - A)^{-1} g(z) \varphi(A)^{-1}\left(\Lambda^U(A)\circ B\right) \frac{v_i w_i^*}{z - \theta_i}
dz \\ &= \sum_{i=1}^{ds} \left[g(A) (A - \theta_i I)^{-1}  + g(\theta_i) (\theta_i I - A)^{-1}  \right]\varphi(A)^{-1}\left(\Lambda^U(A)\circ B\right) v_i w_i^* \\
&= \sum_{i=1}^{ds} \left[g(A)   - g(\theta_i) I \right] (A - \theta_i I)^{-1} \varphi(A)^{-1}\left(\Lambda^U(A)\circ B\right) v_i w_i^*.
\end{align*}
\upd{The claim follows by applying Corollary~\ref{cor:final-formula}.}	
\end{proof}

\begin{remark} A consequence of Lemma~\ref{lem:ritz-values} is that if 
a block upper Hessenberg matrix $H$
 has all simple and distinct eigenvalues, then 
all eigenvalues of its block characteristic polynomial $\Lambda(z)$ are 
also simple and both the eigenvalues and 
eigenvectors can be readily computed from the Schur form 
of $H$, without the need of explicitly determining 
$\Lambda(z)$. Since Lemma~\ref{lem:ritz-values} also provides 
a way to scale the eigenvectors as required 
in Corollary~\ref{cor:f-res}, this allows us to evaluate 
\eqref{eq:cauchy-int} by only accessing the 
block upper Hessenberg form of the	projected 
matrix $A_j^{U,Z}$. 
\end{remark}
The previous result can be used to provide computable a posteriori 
error bounds for the approximation error of the action of 
a matrix function on a block vector. Let us consider a diagonalizable matrix $A$ with spectral decomposition $A=\sum_{h=1}^n\lambda_hx_hy_h^*$; then we may rewrite \eqref{eq:cauchy-int} as
\begin{align*}
f(A)B-F_j&=\sum_{i=1}^{js} \left[
f(A)   - \varphi(\theta_i) f(\theta_i) \varphi(A)^{-1}  \right] 
(A - \theta_i I)^{-1} 
(\Lambda^U(A) \circ B) v_i w_i^*\\
&=	
% \sum_{h=1}^nx_hy_h^*(\Lambda^U(\lambda_h)\circ B)\sum_{i=1}^{js}\frac{f(\lambda_h)-\varphi(\theta_i)f(\theta_i)\varphi(\lambda_h)^{-1}}{\lambda_h -\theta_i}v_iw_i^*\\
\upd{
\sum_{h=1}^nx_hy_h^*\sum_{i=1}^{js}\frac{f(\lambda_h)\varphi(\lambda_h)-\varphi(\theta_i)f(\theta_i)}{\lambda_h -\theta_i}(\varphi(A)^{-1}\Lambda^U(A) \circ B)v_iw_i^*}\\
&= \upd{\sum_{h=1}^nx_hy_h^* (\varphi(A)^{-1}\Lambda^U(A) \circ B)\sum_{i=1}^{js}\frac{f(\lambda_h)\varphi(\lambda_h)-\varphi(\theta_i)f(\theta_i)}{\lambda_h -\theta_i} v_iw_i^*.}
\end{align*}
By denoting with \upd{$L(\lambda_h):=\sum_{i=1}^{js}\frac{f(\lambda_h)\varphi(\lambda_h)-\varphi(\theta_i)f(\theta_i)}{\lambda_h -\theta_i}v_iw_i^*\in\mathbb C^{s\times s}$}, we have
\begin{align*}
\mathrm{vec}\left(f(A)B-F_j\right)&= \sum_{h=1}^n\left(L(\lambda_h)^T\otimes x_hy_h^*\right)\mathrm{vec}(\upd{\varphi(A)^{-1}\Lambda^U(A) \circ B}).
\end{align*}
Finally, we note that
\begin{align*}
\norm{f(A)B-F_j}_F&\le\norm{\upd{\varphi(A)^{-1}\Lambda^U(A) \circ B}}_F\norm{\sum_{h=1}^n\left(L(\lambda_h)^T\otimes x_hy_h^*\right)}_2\\ &=\norm{\upd{\Pi_{U,Z}U_{j+1}\Gamma_{j+1}^UR_B}}_F\norm{\sum_{h=1}^n(I\otimes X)\left(L(\lambda_h)^T\otimes e_he_h^*\right)(I\otimes X^{-1})}_2,
\end{align*}
where $X=[x_1,\dots,x_n]\in\mathbb C^{n\times n}$.
This leads to the following result.
\begin{corollary}\label{cor:aposteriori-matfun1}
  Under the assumptions of Theorem~\ref{thm:residual-rat}, 
  if $A_j^{U,Z}$ has simple eigenvalues
  and  $A$ is diagonalizable then
  \begin{align*}
    \| E_j \|_F &:= \| f(A) B - F_j \|_F 
    \leq \kappa_{\mathrm{eig}}(A)\norm{\upd{\Pi_{U,Z}U_{j+1}\Gamma_{j+1}^U R_B}}_F\max_{h=1,\dots,n}\norm{L(\lambda_h)}_2
  \end{align*}
  where $\kappa_{\mathrm{eig}}$ indicates the two norm condition number of an eigenvector matrix of $A$, and $L(\lambda_h)=\sum_{i=1}^{js}\frac{\upd{\varphi(\lambda_h)}f(\lambda_h)-\varphi(\theta_i)f(\theta_i)}{\lambda_h -\theta_i}v_iw_i^*$.
\end{corollary}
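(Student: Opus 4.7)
The plan is to formalize the chain of manipulations already sketched in the paragraph immediately preceding the statement. The starting point is the identity provided by Corollary~\ref{cor:f-res}, rewritten via Corollary~\ref{cor:final-formula} as
\[
f(A)B - F_j \;=\; \sum_{i=1}^{js} \bigl[\varphi(A)f(A) - \varphi(\theta_i)f(\theta_i) I\bigr](A - \theta_i I)^{-1}\,\Pi_{U,Z}U_{j+1}\Gamma_{j+1}^U R_B\,v_i w_i^*.
\]
The key observation is that each term in square brackets, right-multiplied by $(A-\theta_i I)^{-1}$, is a scalar function of $A$ evaluated via the divided difference $\psi_i(\lambda) := (\varphi(\lambda)f(\lambda) - \varphi(\theta_i)f(\theta_i))/(\lambda - \theta_i)$.

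Next I would diagonalize $A = X\,\mathrm{diag}(\lambda_1,\ldots,\lambda_n)\,X^{-1}$ with $X = [x_1,\ldots,x_n]$ and $X^{-*} = [y_1,\ldots,y_n]$, so that $\psi_i(A) = \sum_h \psi_i(\lambda_h) x_h y_h^*$. Substituting this spectral decomposition and swapping the order of summation collects all the $i$-dependent scalars into the $s \times s$ matrix $L(\lambda_h) = \sum_{i=1}^{js}\psi_i(\lambda_h)\,v_i w_i^*$ of the statement, giving
\[
f(A)B - F_j \;=\; \sum_{h=1}^{n} x_h y_h^*\,\bigl(\Pi_{U,Z}U_{j+1}\Gamma_{j+1}^U R_B\bigr)\,L(\lambda_h).
\]

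I would then vectorize using $\mathrm{vec}(PQR) = (R^T \otimes P)\mathrm{vec}(Q)$ to obtain
\[
\mathrm{vec}(f(A)B - F_j) \;=\; \mathcal{M}\,\mathrm{vec}\bigl(\Pi_{U,Z}U_{j+1}\Gamma_{j+1}^U R_B\bigr), \qquad \mathcal{M} := \sum_{h=1}^{n} L(\lambda_h)^T \otimes x_h y_h^*.
\]
Taking Frobenius norms yields $\|f(A)B - F_j\|_F \leq \|\mathcal{M}\|_2\,\|\Pi_{U,Z}U_{j+1}\Gamma_{j+1}^U R_B\|_F$. To bound $\|\mathcal{M}\|_2$ I would conjugate by $I_s \otimes X$: since $X^{-1}(x_h y_h^*)X = e_h e_h^*$, the matrix is similar to $\sum_h L(\lambda_h)^T \otimes e_h e_h^*$, which is block diagonal with diagonal blocks $L(\lambda_h)^T$. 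Hence $\|\mathcal{M}\|_2 \leq \kappa_2(I_s \otimes X)\,\max_h\|L(\lambda_h)^T\|_2 = \kappa_2(X)\,\max_h\|L(\lambda_h)\|_2$, and taking the infimum over eigenvector matrices of $A$ replaces $\kappa_2(X)$ with $\kappa_{\mathrm{eig}}(A)$.

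I do not anticipate serious obstacles: every ingredient has been established earlier. The only point that needs care is the identification of the conjugated matrix as block diagonal and the corresponding reduction of its spectral norm to $\max_h \|L(\lambda_h)\|_2$; once this is laid out cleanly, the rest is a submultiplicativity argument.
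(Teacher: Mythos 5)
Your proposal is correct and follows essentially the same route as the paper: the paper likewise expands the error via the eigendecomposition of $A$, collects the $i$-sum into $L(\lambda_h)$, vectorizes with $\mathrm{vec}(PQR)=(R^T\otimes P)\mathrm{vec}(Q)$, and conjugates by $I\otimes X$ (plus a perfect-shuffle permutation, which is the only cosmetic detail you gloss over when calling $\sum_h L(\lambda_h)^T\otimes e_he_h^*$ block diagonal, and which does not affect the $2$-norm). No gaps.
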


Computing the bounds from Corollary~\ref{cor:aposteriori-matfun1} 
requires some a-priori knowledge of the spectrum of $A$. Since 
we expect that the eigenvalues of $A$ are not explicitly known, 
we propose to sample the matrix-valued function $L(\lambda)$ over a set containing the spectrum of $A$.  \upd{If evaluated on a grid of $n_G$ points, the overall cost of computing the 
maximum of the $\| L(\lambda_h) \|_2$ 
is $\mathcal O(n_Gjs^3+j^3s^3)$. Finally, evaluating $\norm{\Pi_{U,Z}U_{j+1}\Gamma_{j+1}^U R_B}_F$ costs $\mathcal O(njs^2)$ in the general case and $\mathcal O(s^3)$ in the Galerkin case as we have $\norm{\Pi_{U,Z}U_{j+1}\Gamma_{j+1}^U R_B}_F= \norm{\Gamma_{j+1}^U R_B}_F$.}

\begin{remark}
The quantities $L(\lambda_h)$ may be written in terms of function of the projected matrix $A_{j}^{U,Z}$. In view of Lemma~\ref{lem:ritz-values}, we have that 
$w_i, v_i$ are obtained from the last and first blocks of (left and right) eigenvectors	$w_{H,i}, v_{H,i}$ 
for a block upper Hessenberg form $H = Q^* A_j^{U,Z} Q$ by 
\[
  w_i = R_B^* E_1^* w_{H,i}, \quad v_i = R_B^{-1} E_j^* v_{H,i}. 
\]
This follows from Remark~\ref{rem:monic-eigenvectors}, noting that 
the block Hessenberg form $H$ constructs the eigenvectors for the block 
characteristic polynomial with respect to $E_1$, and instead we need those 
for the block characteristic polynomial with respect to $E_1 R_B$.

We also remark that the leading coefficient of $\Lambda^U(z)$ is irrelevant 
in Corollary~\ref{cor:aposteriori-matfun1}, and one can use any block 
characteristic polynomial with respect to $E_1 R_B$ to evaluate the bound. 
Substituting the expressions for $w_i, v_i$ in the formula of Corollary~\ref{cor:aposteriori-matfun1} yields 
$$
L(\lambda) = 
  \Lambda^U(\lambda)\cdot R_B^{-1}E_j^*F_\lambda(H) E_1R_B = 
  \Lambda^U(\lambda)\cdot R_B^{-1}E_j^*Q^*F_\lambda(A_{j}^{U,Z})QE_1R_B,
$$
where $F_\lambda(z)= \frac{f(\lambda)\varphi(\lambda)-f(z)\varphi(z)}{(\lambda-z)}$. % and $H=Q^*A_j^{U,Z} Q$.
% is a block upper Hessenberg form of $A_{j}^{U,Z}$. \textcolor{red}{Ricontrollare! Funizona davvero per Petrov-Galerkin?}
\end{remark}	 

We will see in Section~\ref{sec:numerical-matfun} that the bound from Corollary~\ref{cor:aposteriori-matfun1} can present some 
stability issues in the rational case. 
For this reason, 
we investigate another error formula, and the associated a posteriori bound, 
obtained by replacing \eqref{eq:res-H-rat} in \eqref{eq:f(a)res}. We will 
see in Section~\ref{sec:numerical-matfun} that this error bound is 
informative for all tested examples. 

\begin{corollary}\label{cor:residual-matfun2}
Under the assumptions of Theorem~\ref{thm:residual-rat}
and $A$ diagonalizable with eigendecomposition $A=\sum_{j=1}^n\lambda_jx_jy_j^*$, we have 
% if $\widehat H_{U, j}$ is diagonalizable with eigenvector matrix $S$, then 
\[
  f(A)B - F_j = \sum_{h = 1}^n
     x_h y_h^* \Pi_{U,Z}U_{j+1} \Gamma_{j+1}^U 
     E_j^* K_{U,j}^{-1} 
     \left[ 
    f(A_j^{U,Z}) - f(\lambda_h) I
    \right](A_j^{U,Z} - \lambda_h I)^{-1} E_1 R_B,
\]
and the error norm verifies the inequality 
\[
  \|{f(A)B - F_j}\|_F \leq 
  \gamma \max_{h = 1, \ldots, n} 
  \| \Gamma_{j+1}^U E_j^*  K_{U,j}^{-1}
  \left[ 
   f(A_j^{U,Z}) - f(\lambda_h)I
   \right](A_j^{U,Z} - \lambda_h I)^{-1} E_1 R_B\|_2,
\]
where \upd{$\gamma:=  \kappa_{\mathrm{eig}}(A)\norm{\Pi_{U,Z}U_{j+1}\Gamma_{j+1}^U}_F$}, and $\kappa_{\mathrm{eig}}(A)$ is the $2$-norm condition number of an eigenvector matrix of $A$. 
\end{corollary}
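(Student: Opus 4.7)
The plan is to start from the Cauchy integral representation \eqref{eq:f(a)res} of the error and then to substitute the explicit residual formula obtained in Theorem~\ref{thm:res-H-rat}:
\[
  \res_{B,j}(z) = \Pi_{U,Z}U_{j+1}\,\Gamma_{j+1}^U\,E_j^*K_{U,j}^{-1}(zI - A_j^{U,Z})^{-1}E_1R_B.
\]
Since the leading factor $\Pi_{U,Z}U_{j+1}\Gamma_{j+1}^U E_j^* K_{U,j}^{-1}$ and the trailing factor $E_1R_B$ are independent of $z$, they can be pulled out of the contour integral. Using the spectral decomposition $(zI-A)^{-1}=\sum_{h=1}^n (z-\lambda_h)^{-1} x_h y_h^*$, the integrand splits as a sum over the simple poles at $\lambda_h$, yielding
\[
  f(A)B-F_j = \sum_{h=1}^n x_h y_h^*\,\Pi_{U,Z}U_{j+1}\Gamma_{j+1}^U E_j^* K_{U,j}^{-1}\!\left[\frac{1}{2\pi\mathbf i}\int_{\partial\Omega}\frac{f(z)(zI-A_j^{U,Z})^{-1}}{z-\lambda_h}\,dz\right]\!E_1R_B.
\]

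The key step is to evaluate the bracketed scalar-shifted resolvent integral. This is exactly the same situation that arises in the proof of Corollary~\ref{cor:f-res}: applying the generalized Cauchy formula \cite[Theorem~4.1]{decaybounds} to the function $f(z)/(z-\lambda_h)$, which has a pole at $z=\lambda_h$, gives
\[
  \frac{1}{2\pi\mathbf i}\int_{\partial\Omega}\frac{f(z)(zI-A_j^{U,Z})^{-1}}{z-\lambda_h}\,dz = \bigl[f(A_j^{U,Z})-f(\lambda_h)I\bigr](A_j^{U,Z}-\lambda_h I)^{-1}.
\]
Substituting this back produces the closed-form expression claimed in the first part of the statement.

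For the Frobenius-norm bound, set $P:=\Pi_{U,Z}U_{j+1}\Gamma_{j+1}^U$ and
\[
  T_h:=E_j^* K_{U,j}^{-1}\bigl[f(A_j^{U,Z})-f(\lambda_h)I\bigr](A_j^{U,Z}-\lambda_h I)^{-1}E_1R_B,
\]
so that $f(A)B-F_j = X\hat M$, where $X=[x_1,\ldots,x_n]$ and $\hat M\in\mathbb C^{n\times s}$ has $h$-th row $y_h^* P T_h$. Then $\|f(A)B-F_j\|_F\le \|X\|_2\|\hat M\|_F$, and
\[
  \|\hat M\|_F^2 = \sum_h \|y_h^* P T_h\|_2^2 \le \max_h\|T_h\|_2^2\cdot \|Y^* P\|_F^2 \le \max_h\|T_h\|_2^2\cdot\|X^{-1}\|_2^2\,\|P\|_F^2,
\]
where $Y^*=X^{-1}$. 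Combining the inequalities gives the bound with $\gamma=\kappa_{\mathrm{eig}}(A)\|P\|_F$.

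The main technical care required is in the contour integral step: one must verify that a single $\Omega$ can be chosen enclosing both the spectrum of $A$ (so that the Cauchy representation of $f(A)B$ is valid) and the eigenvalues of $A_j^{U,Z}$ (so that the generalized Cauchy formula applies and $(A_j^{U,Z}-\lambda_h I)^{-1}$ is well defined for each $\lambda_h$ of interest). Both properties are inherited from the analogous setup already used in the proof of Corollary~\ref{cor:f-res}, so no new difficulty actually arises. Finally, note that the split $M_h=P T_h$ is a choice: putting $\Gamma_{j+1}^U$ inside $P$ rather than inside $T_h$ is precisely what matches the form of $\gamma$ stated in the corollary.
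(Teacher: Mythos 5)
Your derivation of the exact error formula is the same as the paper's: substitute the residual expression of Theorem~\ref{thm:res-H-rat} into the Cauchy integral \eqref{eq:f(a)res}, expand $(zI-A)^{-1}$ via the eigendecomposition of $A$, and evaluate the remaining integral $\frac{1}{2\pi\mathbf i}\int_{\partial\Omega}\frac{f(z)}{z-\lambda_h}(zI-A_j^{U,Z})^{-1}\,dz$ by residues (the paper invokes the residue formula directly where you cite the generalized Cauchy formula, but the computation is identical). For the norm inequality the routes differ slightly: the paper vectorizes, writes the error as $\sum_h (M_h^T\otimes x_hy_h^*)\,\mathrm{vec}(\Pi_{U,Z}U_{j+1}\Gamma_{j+1}^U)$, and block-diagonalizes $\sum_h M_h^T\otimes x_hy_h^*$ via a similarity with $I\otimes X$ and a perfect shuffle; you instead factor the sum as $X\hat M$ and bound $\|\hat M\|_F$ row by row. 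Your argument is more elementary (no Kronecker/shuffle machinery) and yields exactly the same constant $\kappa_{\mathrm{eig}}(A)\|\Pi_{U,Z}U_{j+1}\Gamma_{j+1}^U\|_F\max_h\|M_h\|_2$, so it is a valid, arguably cleaner, substitute. One remark: what you prove (and what the paper's own proof establishes) is the bound with $M_h=E_j^*K_{U,j}^{-1}[f(A_j^{U,Z})-f(\lambda_h)I](A_j^{U,Z}-\lambda_h I)^{-1}E_1R_B$, i.e.\ without a leading $\Gamma_{j+1}^U$ inside the maximum; the corollary as printed carries $\Gamma_{j+1}^U$ both in $\gamma$ and inside the max, which appears to be a typo in the statement rather than a gap in your argument (the subsequent remark in the paper, Equation~\eqref{eq:aposteriori-matfun}, is consistent with your version).
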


\begin{proof}
Replacing \eqref{eq:res-H-rat} in \eqref{eq:cauchy-int}, we get 
\begin{align*}
  f(A) B - F_j 
  &= \frac{1}{2\pi i} \int_{\Gamma} (zI - A)^{-1} f(z) \Pi_{U,Z}U_{j+1} \Gamma_{j+1}^U 
  E_j^* K_{U,j}^{-1} 
    (zI - A_j^{U,Z})^{-1} E_j^* R_B  dz. 
\end{align*}
By means of the spectral decomposition 
$A = \sum_{h} \lambda_h x_h y_h^*$, we have 
\[
  f(A) B - F_j = \sum_h x_h y_h^* \Pi_{U,Z}U_{j+1}  \Gamma_{j+1}^U 
  E_j^* K_{U,j}^{-1} 
   \left( \frac{1}{2\pi i}\int_{\Gamma} 
    \frac{f(z)}{z-\lambda_h} 
    (zI - A_j^{U,Z})^{-1} dz\right)  E_j^* R_B. 
\]
Then, the residue formula yields 
\[
  f(A) B - F_j = \sum_{h = 1}^n
  x_h y_h^* \Pi_{U,Z}U_{j+1}  \Gamma_{j+1}^U 
  E_j^* K_{U,j}^{-1}
    \left[ 
     f(A_j^{U,Z}) - f(\lambda_h) I
     \right](A_j^{U,Z} - \lambda_h I)^{-1} E_1 R_B.
\]
Let us denote by \upd{$M_h :=   
E_j^* K_{U,j}^{-1}
\left[ 
 f(A_j^{U,Z}) - f(\lambda_h) I
 \right](A_j^{U,Z} - \lambda_h I)^{-1} E_1R_B$}, 
 then
 \upd{\begin{align*}
  %\| f(A) B - F_j \|_F  &= 
  \left\| \sum_{h = 1}^n
  x_h y_h^* \Pi_{U,Z}U_{j+1} \Gamma_{j+1}^U M_h \right\|_F  
  &= 
  \left\| \sum_{h = 1}^n
  (M_h^T \otimes x_h y_h^*) \mathrm{vec} (\Pi_{U,Z}U_{j+1} \Gamma_{j+1}^U) \right\|_2 \\
  &\leq \norm{\Pi_{U,Z}U_{j+1} \Gamma_{j+1}^U}_F \left\| \sum_{h = 1}^n
  (M_h^T \otimes x_h y_h^*) \right\|_2.
  %\le \sqrt{s}\norm{\Pi_{U,Z}}_2 \left\| \sum_{h = 1}^n
  %(M_h^T \otimes x_h y_h^*) \right\|_2
 \end{align*}}
 Applying a similarity transformation with the matrix $I \otimes [ x_1 \dots x_n ]$, 
 and a perfect shuffle\footnote{\upd{With perfect shuffle we mean the similarity transformation $\Pi( M_h^T \otimes x_h y_h^*)\Pi^* =  x_h y_h^*\otimes M_h^T$, for an appropriate permutation matrix $\Pi$.}} to the matrix  $\sum_{h = 1}^n
 (M_h^T \otimes x_h y_h^*)$ we get a block diagonal matrix with diagonal blocks equal to 
 $M_h$. Hence, we get the claim. 
\end{proof}
\upd{When $s=1$, $A$ is symmetric positive definite, and a Galerkin projection is employed, Corollary~\ref{cor:residual-matfun2} can be easily deduced from the proof of Theorem~4.8 in \cite{benzi2023computation}, that applies to the Krylov approximation of bilinear forms $b^Tf(A)b$.}
\begin{remark}
The upper bound in Corollary~\ref{cor:residual-matfun2} allows us to efficiently compute a posteriori error estimates. For instance, in the Galerkin case with \upd{a Hermitian} matrix $A$ whose spectrum is contained in $[a,b]\subset \mathbb R$, the inequality simplifies to
\begin{equation}\label{eq:aposteriori-matfun}
\| f(A) B - F_j \|_F\le \upd{\|\Gamma_{j+1}^U\|_F} \max_{\lambda\in[a,b]}\|  E_j^*  K_{U,j}^{-1}F( A_j^U, \lambda)
E_1 \upd{R_B} \|_2,
\end{equation}
where $F(z,\lambda):= 
\frac{f(z) - f(\lambda)}{z-\lambda}$. In practice, the maximum over $[a,b]$ in the right-hand side of \eqref{eq:aposteriori-matfun} is replaced with the maximum over a grid of $n_G$ points in $[a,b]$. \upd{The overall cost of evaluating the upper bound is $\mathcal O(n_Gj^2s^3 +j^3s^3)$. In the Petrov-Galerkin case an additional cost of $\mathcal O(njs^2)$ comes from evaluating $\norm{\Pi_{U,Z}U_{j+1}\Gamma_{j+1}^U}_F$.}
\end{remark}

\subsection{Numerical experiments}
\label{sec:numerical-matfun}

We now validate the error bounds of Corollary~\ref{cor:residual-matfun2}
and Corollary~\ref{cor:aposteriori-matfun1} on a set of test problems. 
Our bounds involve the condition number of the eigenvector matrix of $A$, 
and are not representative of the actual error when the matrix is highly 
non-normal. This is often the case for bounds for the norm of a 
matrix function that involve only the spectrum of the argument. An alternative, 
which we have not yet explored, are bounds based on spectral sets such as the 
numerical range or the $\epsilon$-pseudospetrum \cite{crouzeix2017numerical,trefethen2020spectra}. In view of 
the above reasons, in our numerical experiments 
we only consider Hermitian and normal matrices. 

\upd{The numerical tests involve randomly generated matrices; the bounds and 
the errors reported have been averaged over $10$ runs for each test. 
The code is public and  
available at \url{https://github.com/numpi/block-krylov-matfun}. We rely on \texttt{rktoolbox} \cite{berljafa2014rational} 
to construct the rational Krylov subspaces.}

\subsubsection{Galerkin approximation of the matrix exponential} 
Let us consider the discretization of the 1D Laplace operator 
with zero Dirichlet boundary conditions, and diffusion coefficient 
$K = 10^{-3}$. This yields  
$A = K (n+1)^2 \mathrm{tridiag}(1, -2, 1) \in \mathbb{R}^{n \times n}$, and we take as $B \in \mathbb R^{n \times 5}$ 
 a random matrix with i.i.d Gaussian entries, normalized to have $\norm{B}_F = 1$. 
We set $n = 1000$, 
and we employ a polynomial Krylov subspace to obtain a Galerkin 
approximation of $e^{\Delta t A} B$, with $\Delta t = 0.01$. 

We run Arnoldi for $j = 1, \ldots, 20$ steps, and for each value of 
$j$ we report the approximation error and the upper bound given 
by Corollary~\ref{cor:aposteriori-matfun1} and Corollary~\ref{cor:residual-matfun2}.

\begin{figure}[h]
\centering
\begin{tikzpicture}
  \begin{semilogyaxis}[
    xlabel={$j$},
    ylabel={},
    legend pos=north east,
    legend style={at={(1.05,1)}, anchor=north west},
    grid=major,
    width=.7\textwidth,
    height=0.45\textwidth
  ]
  \addplot[no marks, blue, very thick] table [x index=0, y index=1, col sep=space] {test1_paper.dat};
  \addlegendentry{Corollary~\ref{cor:residual-matfun2}}
  \addplot[no marks, red, very thick] table [x index=0, y index=2, col sep=space] {test1_paper.dat};
  \addlegendentry{Corollary~\ref{cor:aposteriori-matfun1}}
  \addplot[mark=square*, very thick, teal, mark size=1.5pt] table [x index=0, y index=3, col sep=space] {test1_paper.dat};
  \addlegendentry{$\|\exp(A)B - F_j\|_F$}
  \end{semilogyaxis}
\end{tikzpicture}
\caption{Galerkin approximation error for the action of the matrix exponential
onto a random block vector, and a posteriori bounds. The results are averaged 
over $10$ runs.}
\label{fig:test1_paper}
\end{figure}
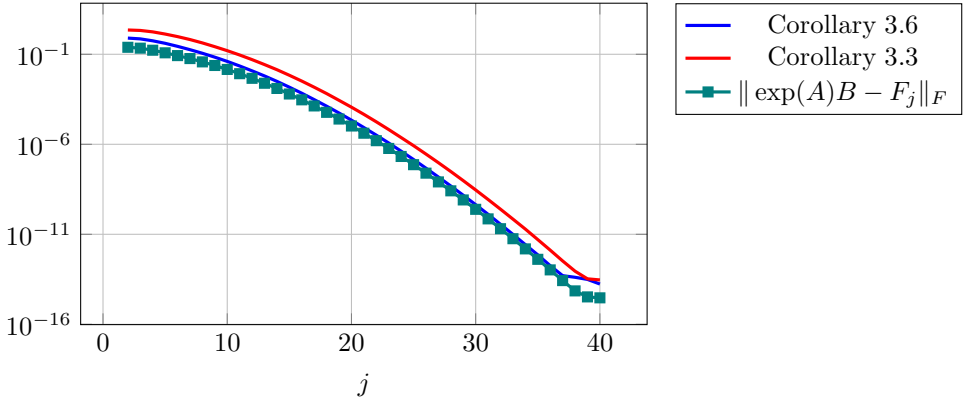

The error bounds are estimated by sampling the matrix-valued functions
on a grid of $100$ points over the spectral interval 
$[\lambda_{\min}(A), \lambda_{\max}(A)]$. The results 
are reported in Figure~\ref{fig:test1_paper}. The bounds 
look indistinguishable, and both provide estimates of the true error 
within an order of magnitude.

\begin{figure}[h]
\centering
\begin{tikzpicture}
  \begin{semilogyaxis}[
    legend columns=-1,
    xlabel={$j$},
    ylabel={},
    legend pos=north west,
    legend style={at={(0,1.05)}, anchor=south west},
    grid=major,
    width=.55\textwidth,
    height=0.45\textwidth,
    ymax = 1,
    ymin = 1e-20,
    name=plotleft
  ]
  \addplot[no marks, blue, very thick] table [x index=0, y index=1, col sep=space] {test2_paper.dat};
  \addlegendentry{Cor.~\ref{cor:residual-matfun2}}
  \addplot[no marks, red, very thick] table [x index=0, y index=2, col sep=space] {test2_paper.dat};
  \addlegendentry{Cor.~\ref{cor:aposteriori-matfun1}}
  \addplot[no marks, black, dashed, very thick] table [x index=0, y index=4, col sep=space] {test2_paper.dat};
  \addlegendentry{Cor.~\ref{cor:aposteriori-matfun1} (quad)}	
  \addplot[mark=square*, very thick, teal, mark size=1.5pt] table [x index=0, y index=3, col sep=space] {test2_paper.dat};
  \addlegendentry{$\|A^{-\frac 12}B - F_j\|_F$}

  \end{semilogyaxis}
  \begin{semilogyaxis}[
    xlabel={$j$},
    ylabel={},
    legend pos=north west,
    legend style={at={(0,1.05)}, anchor=south west},			
    yticklabels={},
    grid=major,
    width=.55\textwidth,
    height=0.45\textwidth,
    ymax = 1,
    ymin = 1e-20,
    anchor=west,
    at={($(plotleft.east)+(0.5cm,0)$)}
  ]
  \addplot[no marks, blue, very thick] table [x index=0, y index=1, col sep=space] {test3_paper.dat};
  %\addlegendentry{Corollary~\ref{cor:residual-matfun2}}
  \addplot[no marks, red, very thick] table [x index=0, y index=2, col sep=space] {test3_paper.dat};
  %\addlegendentry{Corollary~\ref{cor:aposteriori-matfun1}}
  \addplot[no marks, black, dashed, very thick] table [x index=0, y index=4, col sep=space] {test3_paper.dat};
  \addplot[mark=square*, very thick, teal, mark size=1.5pt] table [x index=0, y index=3, col sep=space] {test3_paper.dat};
  %\addlegendentry{$\|f(A)B - F_j\|_F$}
  \end{semilogyaxis}
\end{tikzpicture}
\caption{Galerkin approximation error for the action of the inverse square root
onto a random block vector, and a posteriori bounds. The results are averaged 
over $10$ runs. The plot on the left refers to the matrix $A_1$, while the plot on the right refers to the matrix $A_2$.}
\label{fig:test2_paper}
\end{figure}
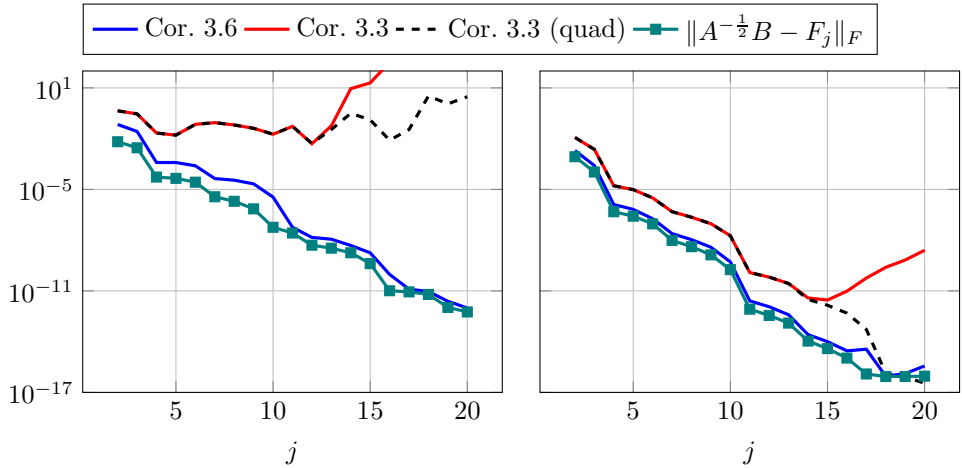
\subsubsection{Galerkin approximation of the matrix inverse square root}
\label{sec:2d-laplace}
\label{sec:numexp-galerkin-rational-insqrt}
We now conduct an experiment to test the a posteriori bounds in the rational Krylov setting. We consider $f(z)=z^{-\frac 12}$, and two matrix arguments, $A_1$, and $A_2$, with different condition numbers. We denote by $T=(n+1)^2\mathrm{tridiag(-1, 2,-1)}\in\mathbb C^{n\times n}$,
and we set 
$
A_1=I_n\otimes T+ T\otimes I_n$
and $A_2= A_1+(n+1)^2 I_{n^2}$
for $n=50$.
As in the previous section, we generate $B\in\mathbb R^{n\times 5}$ at random with unit Frobenius norm, and we use a rational Krylov subspace $\mathcal{RK}^\square_j(A,B,\Sigma)$ to get a Galerkin approximation of $A_i^{-\frac 12}B$, $i=1,2$. The set of poles $\Sigma$ is generated as described in \cite[Section 3.5]{massei2021rational} with the method of equidistributed sequences. The convergence history of the approach, and the a posteriori bounds arising from Corollary~\ref{cor:aposteriori-matfun1}, and Corollary~\ref{cor:residual-matfun2} are reported in Figure~\ref{fig:test2_paper}. 
Corollary~\ref{cor:residual-matfun2} provides a quite tight estimate of the 
approximation error for both matrices. However,  the evaluation of the upper bound in 
Corollary~\ref{cor:aposteriori-matfun1} suffers from numerical instability. To verify this, we added to the plot the bound computed 
with quadruple precision (by means of the Advanpix Multiprecision Computing Toolbox (\url{https://www.advanpix.com/}). This delays or eliminates the divergence.

% \upd{Concerning the computational effort, the bounds require 4.7\% (for Corollary~\ref{cor:aposteriori-matfun1}) 
% and 1.6\% (for Corollary~\ref{cor:residual-matfun2}) of the total time; the
% asymptotic cost of the bound is lower than the cost of the Krylov methods, 
% so these numbers become smaller as the size of the matrix increases.}

\subsubsection{Petrov-Galerkin approximation of the matrix exponential}

Let us consider the Petrov-Galerkin approximation of the exponential
of a normal matrix $A \in \mathbb C^{n \times n}$, with $n = 1024$ and 
complex eigenvalues distributed as in 
Figure~\ref{fig:test_petrov} (left). More in detail, the eigenvalues are 
chosen as $\lambda_{i,j} = \rho_i e^{i \theta_j}$, where 
$\rho_i$ ranges in the set of logarithmically spaced points 
between $10^{-3}$ and $1$,
and $\theta_j$ are equispaced between $-\frac \pi 2$ and $\frac \pi 2$, 
for $i = 1, \ldots, 32$, and $j = 1, \ldots, 32$. 
\upd{We choose as $\Z_j$ an orthogonal basis of $\mathcal K_j^\square(A^T, C)$, where the} block vectors $B,C \in \mathbb C^{n \times 5}$ are generated randomly as in the previous experiments. Since $A$ is normal and the block vectors are chosen with a distribution
invariant under unitary transformations, without  loss of generality 
we take $A$ to be diagonal. The a posteriori bounds are sampled over a sectorial grid of 
$50 \times 50$ points with logarithmically \upd{distributed} moduli, and 
uniformly distributed arguments.
\begin{figure}[h]
\centering
\begin{tikzpicture}
  \begin{axis}[
    xlabel={Real part},
    ylabel={Imaginary part},
    grid=major,
    width=.5\textwidth,
    height=0.45\textwidth,
    legend pos=north east,
    legend style={at={(0,1.05)}, anchor=south west}
  ]
  \addplot[only marks, mark=*, teal, mark size=0.8pt] table [x index=0, y index=1, col sep=space] {test4_paper_petrov_eigs.dat};
  \addlegendentry{Eigenvalues}
  \end{axis}
  \begin{semilogyaxis}[
    xlabel={$j$},
    ylabel={},
    legend pos=north east,
    legend columns=-1,
    grid=major,
    width=.5\textwidth,
    height=0.45\textwidth,
    legend style={at={(1,1.05)}, anchor=south east},
    xshift = 6.5cm
  ]
  \addplot[no marks, blue, very thick] table [x index=0, y index=1, col sep=space] {test4_paper_petrov.dat};
  \addlegendentry{Cor.~\ref{cor:residual-matfun2}}
  \addplot[no marks, red, very thick] table [x index=0, y index=2, col sep=space] {test4_paper_petrov.dat};
  \addlegendentry{Cor.~\ref{cor:aposteriori-matfun1}}
  \addplot[mark=square*, very thick, teal, mark size=1.5pt] table [x index=0, y index=3, col sep=space] {test4_paper_petrov.dat};
  \addlegendentry{$\|\exp(A)B - F_j\|_F$}
  \end{semilogyaxis}
\end{tikzpicture}
\label{fig:test_petrov}

\caption{Petrov-Galerkin approximation error for the action of the matrix exponential
onto a random block vector, and a posteriori bounds. The matrix 
$A$ is diagonal with eigenvalues as in the left part of the Figure. 
The results are averaged 
over $10$ runs.}
\end{figure}
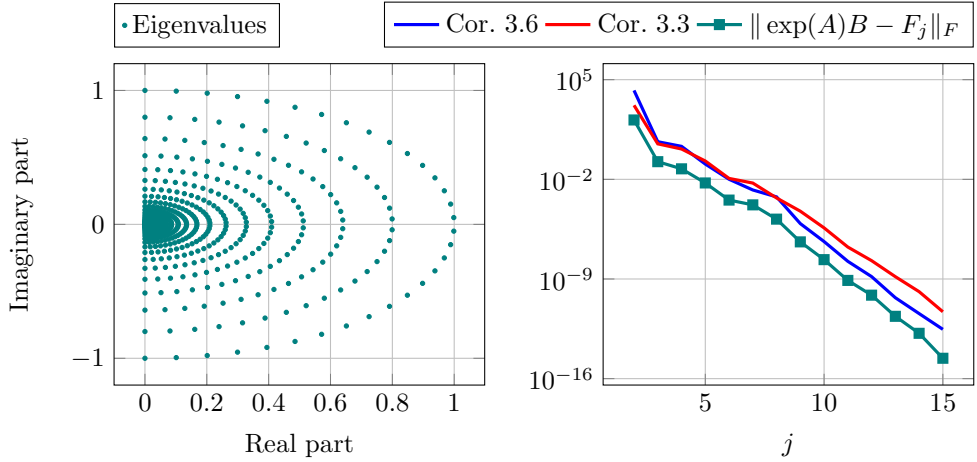
The results in Figure~\ref{fig:test_petrov} (right) show that both 
bounds are tight.   

\subsubsection{Petrov-Galerkin approximation of the matrix inverse square root}
We now test the computation of the matrix inverse square root with the Petrov-Galerkin method. We consider a similar diagonal matrix $A$ as in the previous experiment, and we approximate the action of $A^{-\frac 12}$ onto a random block vector $B$ with $s=5$ columns. The only difference is the imaginary part is now included in a 
smaller sectorial region of the complex, as displayed in Figure~\ref{fig:test_petrov2}. 
\upd{We choose the rational Krylov subspaces 
$\mathcal {RK}_j^\square(A,B,\Sigma)$ 
and $\mathcal {RK}_j^\square(A^*,C,\Sigma)$ for the Petrov-Galerkin approximation}, with poles 
generated as in \cite[Section 3.5]{massei2021rational}, by using 
as spectral interval $[2.5 \cdot 10^{-4}, 4]$. The latter interval is slightly 
larger than the minimum and \upd{maximum} real parts of the eigenvalues of $A$.
As previously, we sample the matrix-valued functions 
over a $50 \times 50$ sectorial grid.

\begin{figure}[h]
\centering
\begin{tikzpicture}
  \begin{axis}[
    xlabel={Real part},
    ylabel={Imaginary part},
    grid=major,
    width=.5\textwidth,
    height=0.45\textwidth,
    legend pos=north east,
    legend style={at={(0,1.05)}, anchor=south west}
  ]
  \addplot[only marks, mark=*, teal, mark size=0.8pt] table [x index=0, y index=1, col sep=space] {test5_paper_petrov_eigs.dat};
  \addlegendentry{Eigenvalues}
  \end{axis}
  \begin{semilogyaxis}[
    xlabel={$j$},
    ylabel={},
    legend pos=north east,
    legend columns=2
    grid=major,
    width=.5\textwidth,
    height=0.45\textwidth,
    legend style={at={(1,1.05)}, anchor=south east},
    ymax = 1e6,
    xshift = 6.5cm,
  ]
  \addplot[no marks, blue, very thick] table [x index=0, y index=1, col sep=space] {test5_paper_petrov.dat};
  \addlegendentry{Cor.~\ref{cor:residual-matfun2}}
  \addplot[no marks, red, very thick] table [x index=0, y index=2, col sep=space] {test5_paper_petrov.dat};
  \addlegendentry{Cor.~\ref{cor:aposteriori-matfun1}}
  \addplot[no marks, dashed, black, very thick] table [x index=0, y index=4, col sep=space] {test5_paper_petrov.dat};
  \addlegendentry{Cor.~\ref{cor:aposteriori-matfun1} (quad)}
  \addplot[mark=square*, very thick, teal, mark size=1.5pt] table [x index=0, y index=3, col sep=space] {test5_paper_petrov.dat};
  \addlegendentry{$\|A^{-\frac 12} B - F_j\|_F$}
  \end{semilogyaxis}
\end{tikzpicture}
\label{fig:test_petrov2}

\caption{Petrov-Galerkin approximation error for the inverse square root 
of $A$ applied to a random block vector, and a posteriori bounds. The matrix 
$A$ is diagonal with eigenvalues as in the left part of the Figure. 
The results are averaged 
over $10$ runs.}
\end{figure}
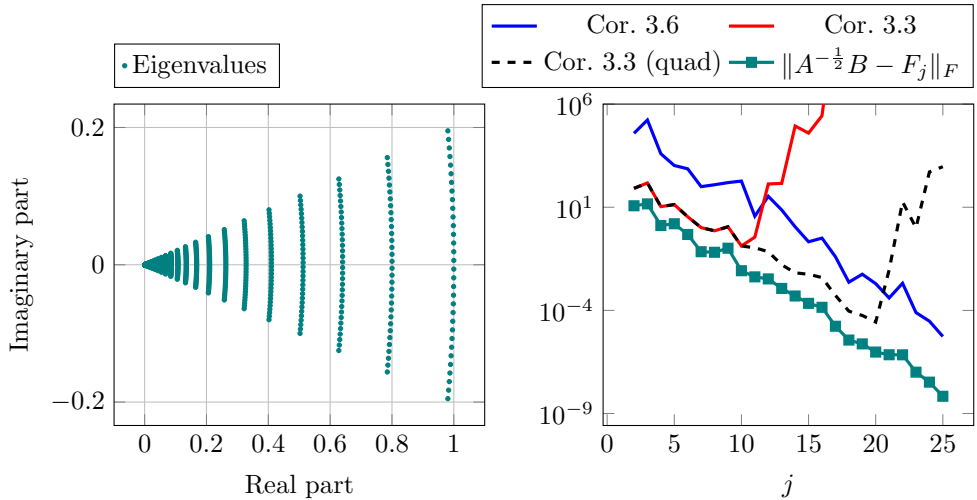

As in Section~\ref{sec:numexp-galerkin-rational-insqrt}, the bound from 
Corollary~\ref{cor:aposteriori-matfun1} suffers from numerical instability, 
and we report also its evaluation using quadruple precision. In both cases, 
the instability eventually appears, but we manage to get a reliable estimate 
of the error up to $10$ poles with doubles, 
and up to $12$ poles using quadruple precision. 
\upd{
\subsubsection{Computational times for the error bounds}
\label{sec:timings}
We conclude by testing the computational effort required to 
evaluate the bounds as stopping criterion in a Krylov method. We 
consider the 2D Laplace matrix of Section~\ref{sec:2d-laplace}, 
and the evaluation $e^{-A}B$ and $A^{-\frac 12} B$, 
with a polynomial and a rational Krylov subspace, respectively. 
In the rational case, the poles are chosen as in Section~\ref{sec:2d-laplace}.  
The number of iteration is fixed to $20$, we let the 
size of the matrix $A$ to grow from $400$ up to 
$10000$, and $B$ is always a random block vector with $5$ columns. For each test, we report the total time of the Krylov method
and the percentage spent evaluating the bounds. The results 
are reported in Table~\ref{tab:times} and showcase that the cost 
of the bound becomes negligible as the problem size increases. }

\begin{table}
	  \caption{Execution time needed for $20$ iterations of the block rational Krylov method for approximating $\exp(-A)B$ (left), and $A^{-\frac 12}B$ (right), with $A\in\mathbb R^{n^2\times n^2}$ taken as in Section~\ref{sec:2d-laplace}. The columns labeled as $t_B$, and $t_C$ indicate the total time employed by the method using as stopping criterion the bound from Corollary~\ref{cor:aposteriori-matfun1}, and Corollary~\ref{cor:residual-matfun2}, respectively. The columns labeled as $t_{\mathrm{bnd},B}$, and $t_{\mathrm{bnd},C}$ report the percentage of the total time spent for evaluating the stopping criteria.}  \label{tab:times} 
  \centering
  \pgfplotstableread[
    col sep=tab,
    header=false,
    columns={%
        N,
        tA, errA,
        tB, errB,
        tC, errC,
        tD, errD
    }
  ]{test_times.dat}\datatable
  \pgfplotstabletypeset[
    col sep=tab,
    %columns={N,tA,errA,tB,errB,tC,errC,tD,errD},
    columns/0/.style={column name={$n^2$},column type={c|}},
    columns/1/.style={fixed,precision=2,column name={$t_{B}$}},
    columns/2/.style={
        column name={$t_{\mathrm{bnd},B}$},
        fixed, precision = 1,
        postproc cell content/.append style={
            /pgfplots/table/@cell content/.add={
            }{\,\%}
        }
    },
    columns/3/.style={fixed,precision=2,column name={$t_C$}},
    columns/4/.style={
        column name={$t_{\mathrm{bnd},C}$},
        fixed, precision = 1,
        postproc cell content/.append style={
            /pgfplots/table/@cell content/.add={
            }{\,\%}
        },
        column type={c|}
    },
    columns/5/.style={fixed,precision=2,column name={$t_B$}},
    columns/6/.style={
        column name={$t_{\mathrm{bnd},B}$},
        fixed, precision = 1,
        postproc cell content/.append style={
            /pgfplots/table/@cell content/.add={
            }{\,\%}
        }
    },
    columns/7/.style={fixed,precision=2,column name={$t_C$}},
    columns/8/.style={
        column name={$t_{\mathrm{bnd},C}$},
        fixed, precision = 1,
        postproc cell content/.append style={
            /pgfplots/table/@cell content/.add={
            }{\,\%}
        }
    },
    every head row/.style={
        before row={
            \multicolumn{1}{c|}{} &
            \multicolumn{4}{c|}{$e^{-A} B$} &
            \multicolumn{4}{c}{$A^{- \frac 12} B$} \\
        },
        after row=\midrule
    }
  ]{\datatable}  

\end{table}

\section{Recurrence relations for block characteristic polynomials}
\label{sec:computational}
\upd{In this section we derive properties and recurrence relations for block characteristic polynomials coming from either a block upper Hessenberg matrix $H$ or matrices of the form $HK^{-1}$ with both $H,K$ block upper Hessenberg. The former will be useful in the analysis of block polynomial Krylov methods while the latter applies to the rational Krylov case. We always consider a block vector of the form $E_1 M$.} 

\subsection{The block upper Hessenberg case}

We now show that, given a sequence of embedded block upper Hessenberg matrices 
$\{ H_j \}_{j \geq 0}$, 
 defined as 
\begin{equation} \label{eq:Hj-clenshaw}
  H_j = \begin{bmatrix}
    \Phi_{1} & \Xi_{1,2}&\dots&\Xi_{1,j} \\ 
    \Gamma_2 & \Phi_2 & \ddots &\vdots \\ 
    & \ddots & \ddots & \Xi_{j-1,j} \\ 
    & & \Gamma_{j} & \Phi_j \\
  \end{bmatrix} = \left[ \begin{array}{ccc|c}
    \phantom{\Phi_{1}} & \phantom{\Xi_{1,2}} &\phantom{\dots} &\Xi_{1,j} \\ 
    \phantom{\Gamma_2} & H_{j-1} & \phantom{\ddots} &\vdots \\ 
    & \phantom{\ddots} & \phantom{\ddots} & \Xi_{j-1,j} \\ \hline 
    & & \Gamma_{j} & \Phi_j \\
  \end{array} \right] \in \mathbb R^{js \times js}, 
\end{equation}
their block characteristic polynomials
satisfy  a recurrence relation.

\begin{lemma} \label{lem:clenshaw}
  Let $P^{[j]}(\lambda)$ be the matrix polynomials recursively defined as
  \begin{align*}
    P^{[0]}&=I,\qquad P^{[1]}(\lambda) = \lambda I - \Phi_1,\\
    P^{[j]}(\lambda)&=  P^{[j-1]}(\lambda)(\lambda \Gamma_{j}^{-1}-\Gamma_j^{-1}\Phi_j)-\sum_{i=1}^{j-1}P^{[i-1]}(\lambda)(\Gamma_i^{-1}\Xi_{i,j}) ,
  \end{align*}
  where we set $\Gamma_1=I$. Then, $M^{-1} P^{[j]}(\lambda)$ is a block characteristic polynomial for 
  $H_j$ with respect to $E_1M$, and its leading coefficient is given by 
  $M^{-1} \Gamma_{2}^{-1}\dots\Gamma_{j}^{-1}$.
\end{lemma}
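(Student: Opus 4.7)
The plan is to prove by induction on $j$ that $P^{[j]}(H_j) \circ E_1 = 0$. This is equivalent to the block characteristic polynomial claim, since expanding the $\circ$-operator directly gives
\[
(M^{-1} P^{[j]})(H_j) \circ (E_1 M) = \sum_{k} H_j^k (E_1 M)(M^{-1} P^{[j]}_k) = P^{[j]}(H_j) \circ E_1.
\]
The leading-coefficient claim follows by a separate induction on the recurrence: the $\lambda^j$-coefficient of $P^{[j]}$ arises only from $P^{[j-1]}(\lambda) \cdot \lambda\Gamma_j^{-1}$, so by induction it equals $\Gamma_2^{-1}\cdots \Gamma_j^{-1}$, and left multiplication by $M^{-1}$ gives the stated result.

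The core identity is that for every $k \geq 0$, and for any block upper Hessenberg matrix $H$ large enough for the computation to make sense,
\[
V_k := P^{[k]}(H)\circ E_1 = E_{k+1}\Gamma_{k+1},
\]
with the convention $\Gamma_1 = I$. The base cases $V_0 = E_1$ and $V_1 = H E_1 - E_1\Phi_1 = E_2\Gamma_2$ are immediate. For the induction step I would rewrite $P^{[k]}$ via its recursive definition and apply the product rule $(PQ)(A)\circ X = Q(A)\circ (P(A)\circ X)$, which follows directly from unfolding the definition of $\circ$. Combined with the inductive hypothesis $V_{i-1} = E_i \Gamma_i$, the degree-one factor $\lambda\Gamma_k^{-1} - \Gamma_k^{-1}\Phi_k$ acts on $V_{k-1} = E_k\Gamma_k$ to produce $H E_k - E_k\Phi_k$; expanding the $k$-th block column as $H E_k = \sum_{i=1}^{k-1} E_i\Xi_{i,k} + E_k\Phi_k + E_{k+1}\Gamma_{k+1}$ leaves $\sum_{i=1}^{k-1} E_i\Xi_{i,k} + E_{k+1}\Gamma_{k+1}$. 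Each constant factor $\Gamma_i^{-1}\Xi_{i,k}$ acts on $V_{i-1} = E_i\Gamma_i$ by right-multiplication, contributing $E_i\Xi_{i,k}$. These cancel telescopically with the sum above, leaving $V_k = E_{k+1}\Gamma_{k+1}$.

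The truncation to $H_j$ affects the argument only in the final step. For $k < j$, the vectors $E_1,\ldots, E_{k+1}$ all lie within the first $j$ blocks, so the identity $V_k = E_{k+1}\Gamma_{k+1}$ still holds inside $H_j$. When $k = j$, the crucial difference is that $H_j E_j$ no longer contains an $E_{j+1}\Gamma_{j+1}$ term (there is no block row beyond the $j$-th), so rerunning the same telescoping argument yields $V_j = 0$, which is exactly the block characteristic polynomial property. The main subtlety of the proof is the bookkeeping induced by the \emph{reversed} order in $(PQ)(A)\circ X = Q(A)\circ (P(A)\circ X)$: since the matrix coefficients do not commute, the constants $\Gamma_i^{-1}\Xi_{i,k}$ and $\Gamma_k^{-1}$ must be applied as right multipliers on the previously computed $V_{i-1}$, producing the clean simplifications $V_{i-1}\,\Gamma_i^{-1}\Xi_{i,k} = E_i\Xi_{i,k}$ and $H\,V_{k-1}\Gamma_k^{-1} = H E_k$ that power the cancellations.
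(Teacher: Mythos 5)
Your proposal is correct and follows essentially the same route as the paper's proof: both establish the intermediate identity $P^{[k]}(H_j)\circ E_1 = E_{k+1}\Gamma_{k+1}$ by induction using the recurrence (your explicit product rule $(PQ)(A)\circ X = Q(A)\circ(P(A)\circ X)$ is used implicitly in the paper), and both conclude $P^{[j]}(H_j)\circ E_1 = 0$ from the absence of a subdiagonal block in the last block column of $H_j$. The reduction from $E_1M$ to $E_1$ and the leading-coefficient computation also match the paper's argument.
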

\begin{proof}
  We note that the claim is equivalent to showing that $P^{[j]}(\lambda)$ is a block 
  characteristic polynomial for $H_j$ with respect to $E_1$, and leading 
  coefficient $\Gamma_{2}^{-1}\dots\Gamma_{j}^{-1}$. The result can be verified by a 
  direct computation for $j=0,1$.
  
  Before performing the induction step, 
  we prove again by induction over $i$ that 
  $\left( P^{[i-1]}(H_j) \circ E_1 \right) \Gamma_i^{-1} = E_i$
  holds for all $i \leq j$. For $i = 1,2$, this can be verified directly 
  using the definition of $P^{[0]}(\lambda)$ and $P^{[1]}(\lambda)$. 
  Otherwise, 
  \begin{align*}
    P^{[i-1]}(H_j) \circ E_1 &= H_j \left[ P^{[i-2]}(H_j) \circ E_1\right] \Gamma_{i-1}^{-1}
    - \left[ P^{[i-2]}(H_j) \circ E_1\right] \Gamma_{i-1}^{-1}\Phi_{i-1} \\ 
    &- \sum_{h = 1}^{i-2} \left[ P^{[h-1]}(H_j) \circ E_1\right] \Gamma_{h-1}^{-1}\Xi_{h,i-1} \\ 
    &= H_j E_{i-1} - E_{i-1} \Phi_{i-1} - \sum_{h = 1}^{i-2} E_h \Xi_{h,i-1} = E_{i} \Gamma_i, 
  \end{align*}
  where the last equality follows from the block structure of $H_j$ in the $(i-1)$th column. 
  We conclude by proving that $P^{[j]}(H_j)\circ E_1 = 0$. The recurrence
  relation yields 
  \begin{align*}
    P^{[j]}(H_j) \circ E_1 &= 
    H_j \left(
    P^{[j-1]}(H_j) \circ E_1
    \right) \Gamma_j^{-1} - 
    (P^{[j-1]}(H_j) \circ E_1) \Gamma_j^{-1} \Phi_j \\ 
    &-\sum_{i = 1}^{j-1} (P^{[i-1]}(H_j) \circ E_1) \Gamma_i^{-1} \Xi_{ij} \\ 
    &= H_j E_j - E_j \Phi_j - \sum_{i = 1}^{j-1}E_i \Xi_{ij} = 0,
  \end{align*}
  where again the last equality comes from the structure of the last column 
  of $H_j$. 
\end{proof}

%\subsection{Proof of Lemma~\ref{lem:ritz-values}}
Lemma~\ref{lem:clenshaw} allows us to give a concise proof 
of Lemma~\ref{lem:ritz-values}. 
\begin{proof}[Proof of Lemma~\ref{lem:ritz-values}]
  Let us consider the matrix pencil
  \[
    \lambda I - H = \begin{bmatrix}
    \lambda I - \Phi_{1} & -\Xi_{1,2}&\dots&-\Xi_{1,j} \\ 
    -\Gamma_2 & \lambda I -\Phi_2 & \ddots &\vdots \\ 
    & \ddots & \ddots & -\Xi_{j-1,j} \\ 
    & & -\Gamma_{j} & \lambda I - \Phi_j \\
  \end{bmatrix}.
  \]
  Note that the block entry in position $(1,1)$ is 
  $P^{[1]}(\lambda)$. 
  Since $\Gamma_2$ is invertible, 
  we can use block Gaussian elimination to annihilate 
  the entries in the second block row using the first 
  block column. A direct computation shows that 
  the first block row is 
  \[
    \begin{bmatrix}
    P^{[1]}(\lambda) & P^{[2]}(\lambda) & 
      P^{[1]}(\lambda) \Gamma_2^{-1} \Xi_{2,3} -\Xi_{1,3} & 
      \dots & 
      P^{[1]}(\lambda) \Gamma_2^{-1} \Xi_{2,j} -\Xi_{1,j} 
    \end{bmatrix}
  \]
  Proceeding by induction,  after $j - 1$ steps 
  the reduced matrix has the form 
  \[
    \begin{bmatrix}
    P^{[1]}(\lambda) & \dots & P^{[j-1]}(\lambda)& P^{[j]}(\lambda) \\
    -\Gamma_2 \\
    & \ddots \\
    && -\Gamma_j 
    \end{bmatrix}.
  \]
  This implies that $H$ is a linearization for $P(\lambda)$, 
  since all block Gauss moves are unimodular transformations
  \cite{glr}, and in particular $H$ and 
  $P^{[j]}(\lambda)$ have the same eigenvalues. 
  Indeed, there exist unimodular matrices $G(\lambda)$ 
  and $F(\lambda)$ such that 
  \[
    G(\lambda) (\lambda I - H) F(\lambda) = \begin{bmatrix}
    0 & \dots & 0 & P^{[j]}(\lambda) \\
    -\Gamma_2 \\
    & \ddots \\
    && -\Gamma_j 
    \end{bmatrix}.
  \]
  Hence, the right and left eigenvectors of $H$ are of the 
  form 
  \[
    F(\lambda) \begin{bmatrix}
      0 & \dots & 0 & v_j^*
    \end{bmatrix}^*, \qquad 
    \begin{bmatrix}
      w_1 & 0 & \dots & 0
    \end{bmatrix} G(\lambda), 
  \]
  respectively. Then, we observe that 
  $E_1^* [ G(\lambda) (\lambda I - H) F(\lambda) ]' E_j = 
  (P^{[j]})'(\lambda)$, and 
  \[
    [ G(\lambda) (\lambda I - H) F(\lambda) ]' = 
    G'(\lambda) (\lambda I - H) F(\lambda) + 
    G(\lambda) F(\lambda) + 
    G(\lambda) (\lambda I - H) F'(\lambda).
  \]
  Left multiplying by $\begin{bmatrix}
    w_1^* & 0 & \dots & 0
  \end{bmatrix}$ and right multiplying by $\begin{bmatrix}
    0 & \dots & 0 & v_j^*
  \end{bmatrix}^*$ yields 
  \begin{align*}
    \begin{bmatrix}
      w_1^* & 0 & \dots & 0
    \end{bmatrix} [ G(\lambda) (\lambda I - H) F(\lambda) ]'
    \begin{bmatrix}
      0 \\ \vdots \\ 0 \\ v_j
    \end{bmatrix} &= 
    w_1^* (P^{[j]})'(\lambda) v_j.
  \end{align*}
  Since $w^* = \begin{bmatrix}
    w_1^* & 0 & \dots & 0
  \end{bmatrix} G(\lambda)$ and 
  $v = F(\lambda) \begin{bmatrix}
    0 & \dots & 0 & v_d^*
  \end{bmatrix}^*$, the claim holds. 
\end{proof}

\subsection{The block upper Hessenberg pencil case}
\upd{
In the case of rational Krylov subspaces, we can generalize the 
recursion in Lemma~\ref{lem:clenshaw} using the rational 
Arnoldi relation \eqref{eq:rational-arnoldi-relation}. More specifically, the matrix pencil
\begin{equation} \label{eq:rational-linearization}
  \lambda K_{U,j} - H_{U,j} = \begin{bmatrix}
    \Xi_{1,1}(\lambda) & \Xi_{1,2}(\lambda) & \ldots & \Xi_{1,j}(\lambda) \\
    \Gamma_2 (\lambda - \sigma_1) & \Xi_{2,2}(\lambda) & \dots & \Xi_{2,j}(\lambda) \\
    & \ddots & \ddots & \vdots \\ 
    && \Gamma_j (\lambda - \sigma_j) & \Xi_{j,j}(\lambda) \\
  \end{bmatrix}
\end{equation}
encodes all the information needed to evaluate  the rational matrix polynomial.

\begin{theorem}\label{thm:rational-clenshaw}
Assume that $\Gamma_i$ is invertible for $i=2,\dots,j$, and let $P^{[i]}(\lambda)$ be the matrix polynomials defined by recurrence 
as follows:
\begin{align*}
    %P^{[0]}&=I,\qquad 
    P^{[1]}(\lambda) &= \Xi_{1,1}(\lambda),\\
    P^{[j]}(\lambda)&=  
    % P^{[j-1]}(\lambda)\Gamma_{j}^{-1} \Xi_{j,j}(\lambda) - 
    %   \sum_{i = 1}^{j-1} P^{[i-1]}(\lambda) \Gamma_i^{-1} \Xi_{i,j}(\lambda)
    %   \prod_{s = i}^{j-1} (\lambda - \sigma_s).
    \Xi_{1,j}(\lambda) \prod_{s = 1}^{j-1} (\lambda - \sigma_s) -
    \sum_{i=1}^{j-1} P^{[i]}(\lambda) \Gamma_{i+1}^{-1} \Xi_{i+1,j}(\lambda)
    \prod_{s = i+1}^{j-1} (\lambda - \sigma_s).
\end{align*}
Then, the pencil $\lambda K_{U,j} - H_{U,j}$ is such that:
\begin{enumerate}
  \item Every eigenvalue $\theta$ of the matrix polynomial 
  $P^{[j]}(\lambda)$ is an eigenvalue of the pencil with the same 
  multiplicity.
  \item If 
    $v = [\times, \ldots, \times, v_j^*]^*$ and 
    $w = [w_1^*, \times, \ldots, \times]^*$ 
    are right and left eigenvectors of the 
    pencil
    corresponding to the eigenvalue $\theta$, then 
    $v_j$ and $w_1$ are right and left eigenvectors of 
    $P^{[j]}(\lambda)$ 
    corresponding to $\theta$, and
    \[
      w_i^* (P^{[j]})'(\theta) v_i = 
      w^* K_{U,j} v.
    \]
  \item $P^{[j]}(\lambda)$ is a block characteristic polynomial
    of $H_{U,j} K_{U,j}^{-1}$ with respect to $E_1$. 
    \item The leading coefficient of $P^{[j]}(\lambda)$ is $(E_j^TK_{U,j}^{-1}E_1)^{-1}$.
\end{enumerate}
\end{theorem}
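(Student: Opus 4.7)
I would mirror the strategy of Lemma~\ref{lem:ritz-values} but apply it to the pencil $\lambda K_{U,j}-H_{U,j}$ given explicitly in~\eqref{eq:rational-linearization}: perform block column operations on it until $P^{[j]}(\lambda)$ appears in the $(1,j)$ block, while the subdiagonal is reduced to the constant blocks $-\Gamma_2,\ldots,-\Gamma_j$. The new difficulty with respect to the polynomial Hessenberg case is that the subdiagonal blocks $\Gamma_i(\lambda-\sigma_{i-1})$ are degree-one polynomials in $\lambda$, so naively dividing by them introduces rational functions rather than polynomials. To preserve unimodularity, at step $k$ of the elimination I would first multiply columns $k+1,\ldots,j$ by the scalar factor $(\lambda-\sigma_k)$ and then use column $k$ to cancel the relevant entries in row $k+1$. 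The accumulated scalar factors across the reduction are precisely those appearing in the defining recurrence, i.e.\ $\prod_{s=i+1}^{j-1}(\lambda-\sigma_s)$ for the summand indexed by $i$.

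\paragraph{Proving claims 1 and 2}
By induction on $k=1,\ldots,j-1$ I would check that after step $k$ the $(1,k+1)$ block of the reduced pencil is $P^{[k+1]}(\lambda)$, and that the structure of the lower-right portion is preserved up to the accumulated $(\lambda-\sigma_s)$ scalings. After $j-1$ steps I obtain unit upper triangular unimodular transformations $G(\lambda), F(\lambda)$ and a scalar polynomial $q(\lambda)$ whose roots all lie in $\Sigma$, such that
\[
  G(\lambda)\bigl(\lambda K_{U,j}-H_{U,j}\bigr)F(\lambda)=q(\lambda)
  \begin{bmatrix}
    0 & \cdots & 0 & P^{[j]}(\lambda)\\
    -\Gamma_2 & & & \\
    & \ddots & & \\
    & & -\Gamma_j &
  \end{bmatrix}.
\]
Since the $\sigma_s$ are not eigenvalues of $H_{U,j}K_{U,j}^{-1}$ by Assumptions~2, the roots of $q(\lambda)$ are not eigenvalues of the pencil, so eigenvalues and their multiplicities for $\lambda K_{U,j}-H_{U,j}$ and for $P^{[j]}(\lambda)$ coincide, yielding claim~$1$. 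Claim~$2$ is then recovered exactly as in the proof of Lemma~\ref{lem:ritz-values}: because $G(\lambda)$ and $F(\lambda)$ are unit upper triangular, the last block of a right pencil eigenvector $v$ equals a right eigenvector of $P^{[j]}(\lambda)$, and symmetrically for $w_1$; the identity $w_1^*(P^{[j]})'(\theta)v_j=w^*K_{U,j}v$ follows by differentiating the reduction relation and discarding the $G'(\lambda)$ and $F'(\lambda)$ terms that vanish upon multiplication by the eigenvectors of the pencil.

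\paragraph{Proving claims 3 and 4}
Claim~$4$ amounts to coefficient extraction in the recurrence: collecting the degree-$j$ coefficient in each summand produces a linear recursion in the leading coefficients $L_i$ of the $P^{[i]}$ that, combined with the block upper Hessenberg structure of $K_{U,j}$, identifies $L_j$ with $(E_j^TK_{U,j}^{-1}E_1)^{-1}$. For claim~$3$, I would verify inductively on $j$ that $P^{[j]}(H_{U,j}K_{U,j}^{-1})\circ E_1=0$, mimicking the inductive argument in the proof of Lemma~\ref{lem:clenshaw} but replacing the identity $(P^{[i-1]}(H_j)\circ E_1)\Gamma_i^{-1}=E_i$ with its rational analogue obtained after accounting for the scalar factors $(\lambda-\sigma_s)$ introduced by the modified elimination. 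The most delicate step I anticipate is precisely this bookkeeping of the scalar polynomial factors, and checking that they combine in exactly the way prescribed by the recurrence; once this is handled cleanly, the four claims follow in close analogy with the polynomial Hessenberg case.
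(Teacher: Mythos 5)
Your treatment of claims 1 and 2 follows essentially the same route as the paper: block Gaussian elimination on the pencil \eqref{eq:rational-linearization}, with block columns rescaled by $(\lambda-\sigma_k)$ to stay in the polynomial ring, followed by the eigenvector and derivative identities exactly as in Lemma~\ref{lem:ritz-values}. One imprecision to fix: you cannot pull out a \emph{single} scalar $q(\lambda)$ in front of the reduced matrix, because block column $i$ accumulates the factor $\prod_{s<i}(\lambda-\sigma_s)$, which differs from column to column; the correct statement is that the right transformation $F(\lambda)$ is block upper triangular with diagonal blocks $\prod_{s<i}(\lambda-\sigma_s)I$, hence invertible at every eigenvalue $\theta$ of the pencil because Assumptions~2 exclude the poles from the Ritz values. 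With that correction the transformation is locally unimodular at each eigenvalue and claims 1 and 2 go through as you describe.

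For claims 3 and 4 your plan diverges from the paper and, as written, leaves the decisive steps unproved. For claim 3 you propose a direct Clenshaw-style induction establishing $P^{[j]}(H_{U,j}K_{U,j}^{-1})\circ E_1=0$ via a ``rational analogue'' of the identity $(P^{[i-1]}(H_j)\circ E_1)\Gamma_i^{-1}=E_i$ from Lemma~\ref{lem:clenshaw}; you flag this bookkeeping as the delicate point but do not state, let alone verify, what the analogue is. This is exactly the content that needs proof: the natural candidate identity involves $\varphi$-type products of $H_{U,j}K_{U,j}^{-1}-\sigma_sI$ acting on $E_1$, and without it the induction does not close. The paper avoids this entirely by a different argument: for a left eigenpair $(\theta,w)$ of the pencil one has $w^*\bigl(P^{[j]}(HK^{-1})\circ E_1\bigr)=w_1^*P^{[j]}(\theta)=0$ by the local unimodular equivalence already established for claims 1--2, and running over a basis of left eigenvectors gives $P^{[j]}(HK^{-1})\circ E_1=0$. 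Since you have already built the equivalence, adopting this step costs nothing and removes the gap. For claim 4, ``coefficient extraction in the recurrence'' produces the relation $L_j=K_{1,j}-\sum_{i=1}^{j-1}L_i\Gamma_{i+1}^{-1}K_{i+1,j}$ among the leading coefficients, but you do not show why its solution equals $(E_j^TK_{U,j}^{-1}E_1)^{-1}$; this identification is the whole point and is not automatic from the block Hessenberg structure. The paper instead reads off $P^{[j]}(\lambda)=E_1^T(\lambda K_{U,j}-H_{U,j})F(\lambda)E_j$, inverts using the anti-triangular structure to get $(P^{[j]}(\lambda))^{-1}=\prod_{i=1}^{j-1}(\lambda-\sigma_i)^{-1}E_j^T(\lambda K_{U,j}-H_{U,j})^{-1}E_1$, and takes $\lambda\to\infty$; I recommend this route, since it follows directly from the reduction you have already performed.
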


\begin{proof}
  We prove the result by showing the existence of 
  a unimodular rational matrix 
  $E(\lambda)$ and a matrix polynomial $F(\lambda)$ such that
  \[
    E(\lambda) (\lambda K_{U,j} - H_{U,j}) F(\lambda) = 
    \begin{bmatrix}
      0 & \dots & 0 & P^{[j]}(\lambda) \\ 
      \Gamma_2 (\lambda - \sigma_1) &&& 0\\ 
      & \ddots && \vdots \\ 
      && \Gamma_j (\lambda - \sigma_j) & 0 \\ 
    \end{bmatrix}
  \]
  Similarly to the proof of Lemma~\ref{lem:ritz-values}, 
  we use block Gaussian elimination with $\Gamma_2 (\lambda - \sigma_1)$,
  to eliminate all the entries on the same block row. However, 
  to stay in the ring of matrix polynomials, we multiply 
  the block columns from $2$ to $j$ by $\lambda - \sigma_1$; 
  repeating this process for all the block rows
  we can prove by induction that
  \[
    (\lambda K_{U,j} - H_{U,j}) F(\lambda) = \begin{bmatrix}
      P^{[1]}(\lambda) & \dots & P^{[j-1]}(\lambda) & P^{[j]}(\lambda) \\ 
      \Gamma_2 (\lambda - \sigma_1) &&& 0\\ 
      & \ddots && \vdots \\ 
      && \Gamma_j (\lambda - \sigma_j) & 0 \\ 
    \end{bmatrix}
  \]
  and 
  \[
    F(\lambda) = \begin{bmatrix}
      I & \times & \dots & \times  \\ 
      & (\lambda - \sigma_1) I &  \ddots & \vdots  \\ 
      & & \ddots & \times \\ 
      & & & \prod_{i = 1}^{j-1} (\lambda - \sigma_i) I \\
    \end{bmatrix}
  \]
  Similarly, we can construct $E(\lambda)$ using block row operations 
  to eliminate the entries in the first block row; we force $E(\lambda)$ to 
  have identities on the diagonal blocks, and rational matrices 
  on the upper triangular part in the first block row. More 
  specifically, 
  \[
    E(\lambda) = \begin{bmatrix}
      I & -P^{[1]}(\lambda) \Gamma_2^{-1} (\lambda - \sigma_1)^{-1} & 
        \dots & -P^{[j-1]}(\lambda) \Gamma_j^{-1} (\lambda - \sigma_j)^{-1} \\ 
        & I \\ 
        && \ddots \\ 
        &&& I
    \end{bmatrix}.
  \]
  Let $\theta$ be an eigenvalue of $\lambda K_{U,j} - H_{U,j}$; then, 
  we know $\theta \neq \sigma_i$ for all $i = 1, \ldots, j-1$, and 
  therefore $E(\lambda)$ and $F(\lambda)$ define a local unimodular 
  transformation at $\lambda = \theta$ \cite[Theorem 2.2]{corless}. Therefore, 
  the eigenvalue of the pencil and of $P^{[j]}(\lambda)$ coincide and ---if $v_j$ and $w_1$ are right and left eigenvectors of $P^{[j]}(\lambda)$--- 
  $[w_1^*,\ 0,\ \dots,\ 0] E(\theta)$ and 
  $F(\theta) [0,\ \dots,\ 0,\ v_j^*]^*$ are left and right eigenvectors
  of the pencil, respectively.

  As in the proof of Lemma~\ref{lem:ritz-values}, deriving 
  the relation between eigenvectors of the pencil and the 
  matrix polynomials yields 
  $w_1^* (P^{[j]})'(\theta) v_j = w_1^* K_{U,j} v_j$. 

  To prove that $P^{[j]}(\lambda)$ is a block
  characteristic polynomial we prove that whenever $K$
  is invertible, 
  \[
    E(\lambda) (\lambda K - H) F(\lambda) = \begin{bmatrix}
      & P^{[j]}(\lambda) \\ 
      G(\lambda) 
    \end{bmatrix}
  \]
  and for all eigenvalues $\theta$ of $\lambda K - H$
  we have $\det E(\theta), \det F(\theta), \det G(\theta)\ne 0$, 
  and $P^{[j]}(H K^{-1}) \circ E_1 = 0$. To obtain this, 
  we note that given a left eigenpair $(\theta, w)$ 
  of $\lambda K - H$ such that $w^* H K^{-1} = \theta w^*$,
  we have 
  \[
    w^* P^{[j]}(HK^{-1}) \circ E_1 =
    w_1^* P^{[j]}(\theta), \qquad 
    w_1 := w^* E_1.
  \]
  Therefore, 
  \begin{align*}
    0 &= w^* (\theta I  - HK^{-1}) = 
     w^* E(\theta)^{-1} E(\theta) (\theta I - HK^{-1}) K F(\theta)F(\theta)^{-1} \\
     &=
     w^* E(\theta)^{-1} \begin{bmatrix}
      0 & P^{[j]}(\theta) \\
      G(\theta)
    \end{bmatrix} F(\theta)^{-1}.
  \end{align*}
  Hence, we have $w^* E(\theta)^{-1} 
  \left[ \begin{smallmatrix}
    0 \\ G(\theta)
  \end{smallmatrix} \right] = 0$, that implies 
  $w^*E(\theta)^{-1} = [\tilde w_1^*, 0, \ldots, 0] $, and in turn 
  $\tilde w_1^* P^{[j]}(\theta) = 0$. In addition, note that 
  $w_1 = w^* E_1 = [\tilde w_1^*, 0, \ldots, 0] E(\theta) E_1 = \tilde w_1^*$. Since $HK^{-1}$ is diagonalizable, this holds for a basis 
  of left eigenvectors, and we conclude that 
  $P^{[j]}(HK^{-1}) \circ E_1 = 0$.
  Finally, we write $P^{[j]}(\lambda)$ as
  $$
  P^{[j]}(\lambda)=E_1^T(\lambda K_{U,j}-H_{U,j}) F(\lambda)E_j,
  $$ and thanks to the antitriangular  structure $(P^{[j]}(\lambda))^{-1}=E_j^TF(\lambda)^{-1}(\lambda K_{U,j}-H_{U,j})^{-1}E_1$. Since $F(\lambda)$ is upper triangular, and we know its diagonal entries, we have
  $$
  (P^{[j]}(\lambda))^{-1}=\prod_{i=1}^{j-1}(\lambda -\sigma_i)^{-1}E_j^T (\lambda K_{U,j}-H_{U,j})^{-1}E_1.
  $$	
  Therefore, the inverse of the leading coefficient
  of $P^{[j]}(\lambda)$ is obtained taking the limit
  \begin{align*}
  	\lim_{\lambda\to\infty} \lambda^j (P^{[j]}(\lambda))^{-1} &=\lim_{\lambda\to\infty} \prod_{i=1}^{j-1}(1 -\frac{\sigma_i}{\lambda})^{-1} ( K_{U,j}-\lambda^{-1}H_{U,j})^{-1}E_1 = E_j^TK_{U,j}^{-1}E_1.
  \end{align*}
\end{proof}
\upd{We remark that the recurrence relation of Theorem~\ref{thm:rational-clenshaw} corresponds to the one in \cite[Theorem 4.3, Algorithm 4.1]{elsworth20}, up to dividing the $P^{[i]}(\lambda)$ by $\Pi_{h=1}^i(\lambda-\sigma_i)$, and considering a rational Arnoldi decomposition scaled so that $\Gamma_i=I_s$ for $i=2,\dots, j$.}

We are now ready to prove Corollary~\ref{cor:final-formula}.

\begin{proof}[Proof of Corollary~\ref{cor:final-formula}]
Assume first that the set of poles $\Sigma$ does not contain poles at infinity.
Since $P^{[j]}$ is 
the block characteristic polynomial with respect to $E_1$, the 
one with respect to $E_1 R_B$,  
is given by $\Lambda^U=R_{B}^{-1} P^{[j]} R_B$, 
and therefore has leading coefficient 
$R_{B}^{-1} (E_j^TK_{U,j}^{-1}E_1)^{-1} R_B$. 
In the Galerkin case, since $A_j^U = H_{U,j} K_{U,j}^{-1}$, combining 
this observation with  Lemma~\ref{lem:rat-lanczos} gives a proof of Corollary~\ref{cor:final-formula}.
In the Petrov-Galerkin scenario, the projected matrix is given by 
$A_j^{U,Z} = H_{U,j} K_{U,j}^{-1} + (Z_j^* U_j)^{-1} Z_j^* U_{j+1} \Gamma_j^U E_j^* K_{U,j}^{-1}$; in particular, $A_j^{U,Z} = \tilde H K_{U,j}^{-1}$ 
for the block upper Hessenberg matrix
\[
  \tilde H = H_{U,j} + (Z_j^* U_j)^{-1} Z_j^* U_{j+1} \Gamma_j^U E_j^*.
\]
Applying again Lemma~\ref{lem:rat-lanczos} with Theorem~\ref{thm:rational-clenshaw} we obtain a proof of 
Corollary~\ref{cor:final-formula}.

If the set $\Sigma$ contains one or more poles at infinity, we cannot directly apply Theorem~\ref{thm:rational-clenshaw}, and we use a continuity argument. Note that, given $\zeta\in\mathbb C$ a complex number that is neither a Ritz value nor a pole, the quantity $\varphi(A)^{-1}\Lambda^U(A)\circ B$ corresponds, up to a right multiplication with an invertible matrix, to the Petrov-Galerkin residual of $(\zeta I-A)X=B$, see Theorem~\ref{thm:residual-rat}. Since we want to study the behavior of the rational Krylov subspace when one or more poles go to infinity, it is natural to consider the poles $\sigma_i=\frac{\alpha_i}{\beta_i}$ where the $[\alpha_i:\beta_i]\in\mathbb P^{1}(\mathbb C)$. Then, in the Arnoldi process one can solve linear systems with matrix coefficients $(\alpha_i I -\beta_i A)$ in place of $(\sigma_iI -A)$. Therefore, the Petrov-Galerkin residual and the quantities $\Pi_{U,Z}, U_{j+1},$ and $\Gamma_{j+1}^U$, depend continuously on the poles, over the Riemann sphere, and this gives the claim. Note that, the case with all poles at infinity was already covered by the combination of Lemma~\ref{lem:poly-lanczos} and Lemma~\ref{lem:clenshaw}.
\end{proof}
}

\section{Conclusions and outlook}
\label{sec:conclusions}
\upd{In this work we have introduced theoretical and algorithmic tools to analyze the convergence of block Krylov methods; we relied on these} to propose two novel \upd{formulas}, contained in Corollary~\ref{cor:aposteriori-matfun1} and Corollary~\ref{cor:residual-matfun2}, for the error of Petrov-Galerkin approximations of matrix functions with block Krylov subspaces. Based on the latter formula, we could provide two a posteriori error bounds for quantities of the form $f(A)B$, that can be evaluated with a computational cost that scales linearly with respect to the size of $A$. The practical evaluation of the bounds requires  
the knowledge of a complex region enclosing the spectrum of $A$. The bound of Corollary~\ref{cor:aposteriori-matfun1} can be affected by numerical instability, 
and this can be mitigated using higher precision in its computation. This might be 
still convenient when dealing with large scale matrices, since the bound only requires 
computations projected version of the original matrix.

Several aspects deserve further investigations and remain for future study. For instance, the analysis of the numerical stability of the proposed procedures is an open issue. Another direction, is to explore the use of block characteristic polynomials for preconditioning block Krylov methods, in analogy to what has been done for the single vector case \cite{loe2022toward}. Finally, it would be of interest to leverage \eqref{eq:remainder-transfer} or \eqref{eq:remainder-transfer2} to propose and test error indicators for approximating transfer functions of MIMO.

\appendix 
\section{A Clenshaw algorithm to evaluate $P(A)\circ B$}
The results in the previous sections pose a natural computational challenge: evaluate 
quantities of the form 
\begin{equation} \label{eq:P(A)b}
	Y = P(A) \circ B, 
\end{equation}
where $P$ is a block characteristic polynomial of 
a matrix $N\in \mathbb C^{js\times js}$  with respect to a block vector $W\in\mathbb C^{js\times s}$. In the case of polynomial Krylov subspaces, 
the matrix $N$ is block upper Hessenberg, and $W = E_1 M$ for some
matrix $M\in\mathbb C^{s\times s}$. This structure is convenient when designing a numerical 
procedure to evaluate \cref{eq:P(A)b}, and in view of Lemma~\cref{lem:clenshaw-general} we can always reduce to this case. 
In practice, we do not rely on the block Arnoldi procedure, as in the proof of Lemma~\ref{lem:clenshaw-general}; rather, 
by means of block Householder reflectors \cite{rotella1999block}  
we compute a unitary matrix $Q$ such that $Q^* N Q = H$ and 
$Q^* W = E_1 M$, and the block characteric polynomial for $H$ with respect 
to $E_1 M$ is equal to the block characteristic polynomial of $N$ with respect to $W$. For this reason, in this section we assume to be working with 
a block upper Hessenberg $H$ and a block vector $E_1 M$. 

We propose an algorithm to evaluate 
\eqref{eq:P(A)b} based on 
two observations: $(i)$ the block characteristic polynomials of block upper Hessenberg matrices with 
respect to vectors $E_1M$ satisfy a  recurrence relation, and $(ii)$ this allows us to 
construct a block Clenshaw rule for the evaluation of $P(A)\circ B$. This is a natural extension of what 
is already well known for the non-block case \cite{golub-meurant}. 

The previous result  suggests a numerical procedure (Clenshaw algorithm) to evaluate $P(A) \circ B$. If the monic version of the block characteristic polynomial 
is desired, it is sufficient to right multiply the result by $\Gamma_j \ldots \Gamma_2 M$ 
at the end. The procedure is reported in Algorithm~\ref{alg:clenshaw}, 
and is described for general matrices $N$ and block vectors $W$, 
employing Lemma~\ref{lem:clenshaw-general}. %This is necessary when dealing 
%with rational Krylov subspaces, where the projected matrix does not 
%have a block upper Hessenberg structure. 	
\begin{algorithm}
	\caption{Evaluate $P(A) \circ B$, for $A\in\mathbb C^{n\times n}$, and $B\in\mathbb C^{n\times s}$ where $P$ is a block 
		characteristic polynomial of 
		$N\in\mathbb C^{js\times js}$ with respect to $W\in\mathbb C^{js\times s}$.}\label{alg:clenshaw}
	\begin{algorithmic}
		\Procedure{BlockClenshaw}{$N, W, A, B$, monic}
		\State Compute $Q,H$,$M$ such that $Q^* N Q = H, Q^* W = E_1 M$ 
		\Comment{Lemma~\ref{lem:clenshaw-general}}
		\State $j \gets $ number of block rows and cols in $H$
		\State $P^{[0]} \gets BM^{-1}$
		\For{$i = 1, \ldots j$}
		\State $P^{[i]} \gets AP^{[i-1]} \Gamma_i^{-1} - 
		P^{[i-1]} \Gamma_i^{-1} \Phi_i - 
		\sum_{h = 1}^{i-1} P^{[h-1]} \Gamma_h^{-1} \Xi_{h,i}$
		\EndFor
		\If{monic}
		\For{$i = j, j-1, \ldots 2$}
		\State $P^{[j]} \gets P^{[j]} \Gamma_i$
		\EndFor
		\State $P^{[j]} \gets P^{[j]} M$
		\EndIf
		\State \Return $P^{[j]}$
		\EndProcedure
	\end{algorithmic}
\end{algorithm}

Let us analyze the computational cost of Algorithm~\ref{alg:clenshaw}. The  reduction to block upper Hessenberg form requires $\mathcal O(j^3s^3)$ arithmetic operations. Moreover, at  iteration $i$ of the for loop, the procedure evaluates:
\begin{itemize}
	\item one  product involving the $A\in\mathbb C^{n\times n}$ and a block vector with $s$ columns,
	\item $\mathcal O(i)$ products between $n\times s$ and $s\times s$ matrices,
	\item $\mathcal O(i)$ right division between $n\times s$ and $s\times s$ matrices.
\end{itemize}
Therefore, the cost of generating $P^{[1]},\dots,P^{[j]}$ is $\mathcal O(\mathcal C_A js +nj^2s^2)$, where $\mathcal C_A$ denotes the complexity of a matrix-vector product with the matrix $A$. Finally, the $\mathcal O(i)$ right multiplications needed to get the evaluation of the monic block characteristic cause an additional $\mathcal O(njs^2)$. Thus, the overall complexity of Algorithm~\ref{alg:clenshaw} is $\mathcal O(\mathcal C_Ajs+nj^2s^2+j^3s^3)$. Concerning the memory consumption, the method stores all the block vectors $P^{[i]}$, and this has a cost of $\mathcal O(njs)$. 

We remark that Algorithm~\ref{alg:clenshaw} can also be  used to evaluate the matrix polynomial $P(\lambda)\in\mathbb R[\lambda]^{s\times s}$ for a scalar value $\lambda$, by means of the identity $P(\lambda)=P(\lambda I_s)\circ I_s$. %The latter task is indeed useful when evaluating  \eqref{eq:remainder-transfer}. 
In this case the cost of Algorithm~\ref{alg:clenshaw} reduces to $\mathcal O(j^3s^3)$ ($\mathcal O(j^2s^3)$ if the reduction to block upper Hessenberg form is not needed) for the CPU time and $\mathcal O(js)$ for the memory usage.

\bibliographystyle{siamplain}
\bibliography{references} 

\begin{thebibliography}{10}

\bibitem{corless}
{\sc A.~Amiraslani, R.~M. Corless, and P.~Lancaster}, {\em Linearization of matrix polynomials expressed in polynomial bases}, IMA Journal of Numerical Analysis, 29 (2009), pp.~141--157, \url{https://doi.org/10.1093/imanum/drm051}.

\bibitem{antoulas2001approximation}
{\sc A.~C. Antoulas and D.~C. Sorensen}, {\em Approximation of {L}arge-{S}cale {D}ynamical {S}ystems: {A}n {O}verview}, International Journal of Applied Mathematics and Computer Science, 11 (2001), pp.~1093--1121.

\bibitem{beckermann2009error}
{\sc B.~Beckermann and L.~Reichel}, {\em Error estimates and evaluation of matrix functions via the {F}aber transform}, SIAM Journal on Numerical Analysis, 47 (2009), pp.~3849--3883, \url{https://doi.org/10.1137/080741744}.

\bibitem{benzi2020matrix}
{\sc M.~Benzi and P.~Boito}, {\em Matrix functions in network analysis}, GAMM-Mitteilungen, 43 (2020), p.~e202000012, \url{https://doi.org/10.1002/gamm.202000012}.

\bibitem{benzi2023computation}
{\sc M.~Benzi, M.~Rinelli, and I.~Simunec}, {\em Computation of the von {N}eumann entropy of large matrices via trace estimators and rational {K}rylov methods}, Numerische Mathematik, 155 (2023), pp.~377--414, \url{https://doi.org/10.1007/s00211-023-01368-6}.

\bibitem{berljafa2014rational}
{\sc M.~Berljafa, S.~Elsworth, and S.~G{\"u}ttel}, {\em A rational krylov toolbox for matlab},  (2014).

\bibitem{berljafa2015generalized}
{\sc M.~Berljafa and S.~G\"uttel}, {\em Generalized rational {K}rylov decompositions with an application to rational approximation}, SIAM Journal on Matrix Analysis and Applications, 36 (2015), pp.~894--916, \url{https://doi.org/10.1137/140998081}.

\bibitem{botchev2013residual}
{\sc M.~A. Botchev, V.~Grimm, and M.~Hochbruck}, {\em Residual, restarting, and {Richardson} iteration for the matrix exponential}, SIAM J. Sci. Comput., 35 (2013), pp.~a1376--a1397, \url{https://doi.org/10.1137/110820191}.

\bibitem{botchev24}
{\sc M.~A. Botchev, L.~Knizhnerman, and M.~Schweitzer}, {\em Krylov subspace residual and restarting for certain second order differential equations}, SIAM Journal on Scientific Computing, 46 (2024), pp.~S223--S253, \url{https://doi.org/https://doi.org/10.1137/22M1503300}.

\bibitem{botchev2021residual}
{\sc M.~A. Botchev, L.~Knizhnerman, and E.~E. Tyrtyshnikov}, {\em Residual and restarting in {Krylov} subspace evaluation of the {{\(\varphi\)}} function}, SIAM J. Sci. Comput., 43 (2021), pp.~a3733--a3759, \url{https://doi.org/10.1137/20M1375383}.

\bibitem{casulli2024efficient}
{\sc A.~Casulli and L.~Robol}, {\em An efficient block rational {K}rylov solver for sylvester equations with adaptive pole selection}, SIAM Journal on Scientific Computing, 46 (2024), pp.~A798--A824, \url{https://doi.org/10.1137/23M1548463}.

\bibitem{chen2022error}
{\sc T.~Chen, A.~Greenbaum, C.~Musco, and C.~Musco}, {\em Error bounds for {L}anczos-based matrix function approximation}, SIAM Journal on Matrix Analysis and Applications, 43 (2022), pp.~787--811, \url{https://doi.org/10.1137/21M1427784}.

\bibitem{crouzeix2017numerical}
{\sc M.~Crouzeix and C.~Palencia}, {\em The numerical range is a {$(1+\sqrt{2})$}-spectral set}, SIAM Journal on Matrix Analysis and Applications, 38 (2017), pp.~649--655, \url{https://doi.org/10.1137/17M1116672}.

\bibitem{chebfun}
{\sc T.~A. Driscoll, N.~Hale, and L.~N. Trefethen}, {\em Chebfun guide}, 2014.

\bibitem{druskin2014adaptive}
{\sc V.~Druskin, V.~Simoncini, and M.~Zaslavsky}, {\em Adaptive tangential interpolation in rational {Krylov} subspaces for {MIMO} dynamical systems}, SIAM J. Matrix Anal. Appl., 35 (2014), pp.~476--498, \url{https://doi.org/10.1137/120898784}.

\bibitem{elsworth20}
{\sc S.~Elsworth and S.~G{\"u}ttel}, {\em The block rational {Arnoldi} method}, SIAM J. Matrix Anal. Appl., 41 (2020), pp.~365--388, \url{https://doi.org/10.1137/19M1245505}.

\bibitem{fenunetworks}
{\sc C.~Fenu, D.~Martin, L.~Reichel, and G.~Rodriguez}, {\em Block {Gauss} and anti-{Gauss} quadrature with application to networks}, SIAM J. Matrix Anal. Appl., 34 (2013), pp.~1655--1684, \url{https://doi.org/10.1137/120886261}.

\bibitem{frommer2014convergence}
{\sc A.~Frommer, S.~G{\"u}ttel, and M.~Schweitzer}, {\em Convergence of restarted {Krylov} subspace methods for {Stieltjes} functions of matrices}, SIAM J. Matrix Anal. Appl., 35 (2014), pp.~1602--1624, \url{https://doi.org/10.1137/140973463}.

\bibitem{frommer2017block}
{\sc A.~Frommer, K.~Lund, and D.~B. Szyld}, {\em Block {K}rylov subspace methods for functions of matrices}, Electronic Transactions on Numerical Analysis, 47 (2017), pp.~100--126, \url{https://doi.org/10.1553/etna_vol47s100}.

\bibitem{frommer2020block}
{\sc A.~Frommer, K.~Lund, and D.~B. Szyld}, {\em Block {K}rylov subspace methods for functions of matrices {II}: Modified block fom}, SIAM Journal on Matrix Analysis and Applications, 41 (2020), pp.~804--837, \url{https://doi.org/10.1137/19M1255847}.

\bibitem{frommer2016error}
{\sc A.~Frommer and M.~Schweitzer}, {\em Error bounds and estimates for {K}rylov subspace approximations of {S}tieltjes matrix functions}, BIT Numerical Mathematics, 56 (2016), pp.~865--892, \url{https://doi.org/10.1007/s10543-015-0596-3}.

\bibitem{glr}
{\sc I.~Gohberg, P.~Lancaster, and L.~Rodman}, {\em Matrix {P}olynomials}, vol.~58 of Class. Appl. Math., Philadelphia, PA: Society for Industrial {and} Applied Mathematics (SIAM), reprint of the 1982 original~ed., 2009, \url{https://doi.org/10.1137/1.9780898719024}.

\bibitem{golub-meurant}
{\sc G.~H. Golub and G.~Meurant}, {\em Matrices, {M}oments and {Q}uadrature with {A}pplications}, Princeton Ser. Appl. Math., Princeton, NJ: Princeton University Press, 2010.

\bibitem{gutknecht}
{\sc M.~H. Gutknecht}, {\em Lanczos-type solvers for nonsymmetric linear systems of equations}, in Acta Numerica Vol. 6, 1997, Cambridge: Cambridge University Press, 1997, pp.~271--397, \url{https://doi.org/10.1017/S0962492900002737}.

\bibitem{gutknecht2007block}
{\sc M.~H. Gutknecht}, {\em Block {K}rylov space methods for linear systems with multiple right-hand sides: an introduction}, in Modern mathematical models, methods and algorithms for real world systems, Anshan, 2007, pp.~420--447.

\bibitem{gutknecht2009block}
{\sc M.~H. Gutknecht and T.~Schmelzer}, {\em The block grade of a block {K}rylov space}, Linear Algebra and its Applications, 430 (2009), pp.~174--185.

\bibitem{guttel2013rational}
{\sc S.~G{\"u}ttel}, {\em Rational {K}rylov approximation of matrix functions: Numerical methods and optimal pole selection}, GAMM-Mitteilungen, 36 (2013), pp.~8--31.

\bibitem{guttelblack}
{\sc S.~G{\"u}ttel and L.~Knizhnerman}, {\em A black-box rational {A}rnoldi variant for {C}auchy--{S}tieltjes matrix functions, bit, 53 (2013)}, Cited on, p.~27.

\bibitem{guettelnonlinear}
{\sc S.~G{\"u}ttel and F.~Tisseur}, {\em The nonlinear eigenvalue problem}, Acta Numerica, 26 (2017), pp.~1--94, \url{https://doi.org/10.1017/S0962492917000034}.

\bibitem{higham-matfun}
{\sc N.~J. Higham}, {\em Functions of {M}atrices. {Theory} and {C}omputation}, Philadelphia, PA: Society for Industrial {and} Applied Mathematics (SIAM), 2008, \url{https://doi.org/10.1137/1.9780898717778}.

\bibitem{hochbruck2010exponential}
{\sc M.~Hochbruck and A.~Ostermann}, {\em Exponential integrators}, Acta Numerica, 19 (2010), pp.~209--286.

\bibitem{kent1989chebyshev}
{\sc M.~D. Kent}, {\em Chebyshev, {K}rylov, {L}anczos: {M}atrix {R}elationships and {C}omputations}, Stanford University, 1989.

\bibitem{kressner2010krylov}
{\sc D.~Kressner and C.~Tobler}, {\em Krylov subspace methods for linear systems with tensor product structure}, SIAM journal on matrix analysis and applications, 31 (2010), pp.~1688--1714.

\bibitem{lin2021transfer}
{\sc Y.~Lin}, {\em Transfer function interpolation remainder formula of rational {K}rylov subspace methods}, arXiv preprint arXiv:2102.11915,  (2025).

\bibitem{loe2022toward}
{\sc J.~A. Loe and R.~B. Morgan}, {\em Toward efficient polynomial preconditioning for {GMRES}}, Numerical Linear Algebra with Applications, 29 (2022), p.~e2427.

\bibitem{lund2018new}
{\sc K.~Lund}, {\em A {N}ew {B}lock {K}rylov {S}ubspace {F}ramework with {A}pplications to {F}unctions of {M}atrices {A}cting on {M}ultiple {V}ectors}, Temple University, 2018.

\bibitem{decaybounds}
{\sc S.~Massei and L.~Robol}, {\em Decay bounds for the numerical quasiseparable preservation in matrix functions}, Linear Algebra Appl., 516 (2017), pp.~212--242, \url{https://doi.org/10.1016/j.laa.2016.11.041}.

\bibitem{massei2021rational}
{\sc S.~Massei and L.~Robol}, {\em Rational {K}rylov for {S}tieltjes matrix functions: convergence and pole selection}, BIT Numerical Mathematics, 61 (2021), pp.~237--273.

\bibitem{moret2019krylov}
{\sc I.~Moret and P.~Novati}, {\em Krylov subspace methods for functions of fractional differential operators}, Mathematics of Computation, 88 (2019), pp.~293--312.

\bibitem{rotella1999block}
{\sc F.~Rotella and I.~Zambettakis}, {\em Block {H}ouseholder transformation for parallel qr factorization}, Applied mathematics letters, 12 (1999), pp.~29--34.

\bibitem{saad}
{\sc Y.~Saad}, {\em Iterative {M}ethods for {S}parse {L}inear {S}ystems.}, Philadelphia, PA: SIAM Society for Industrial {and} Applied Mathematics, 2nd ed.~ed., 2003.

\bibitem{simoncini1996hybrid}
{\sc V.~Simoncini and E.~Gallopoulos}, {\em A hybrid block {GMRES} method for nonsymmetric systems with multiple right-hand sides}, Journal of computational and applied mathematics, 66 (1996), pp.~457--469.

\bibitem{simunec2023error}
{\sc I.~Simunec}, {\em Error bounds for the approximation of matrix functions with rational {K}rylov methods}, Numerical Linear Algebra with Applications,  (2023), p.~e2571.

\bibitem{soodhalter2015block}
{\sc K.~M. Soodhalter}, {\em A block {MINRES} algorithm based on the band {L}anczos method}, Numerical Algorithms, 69 (2015), pp.~473--494.

\bibitem{trefethen2020spectra}
{\sc L.~N. Trefethen and M.~Embree}, {\em Spectra and {p}seudospectra: {T}he {B}ehavior of {N}onnormal {M}atrices and {O}perators},  (2005).

\bibitem{ubaru2017fast}
{\sc S.~Ubaru, J.~Chen, and Y.~Saad}, {\em Fast estimation of tr(f({A})) via stochastic {L}anczos quadrature}, SIAM Journal on Matrix Analysis and Applications, 38 (2017), pp.~1075--1099.

\bibitem{xu2024posteriori}
{\sc Q.~Xu and T.~Chen}, {\em A posteriori error bounds for the block-{L}anczos method for matrix function approximation}, Numerical Algorithms,  (2024), pp.~1--25.

\bibitem{zimmerling2025monotonicity}
{\sc J.~Zimmerling, V.~Druskin, and V.~Simoncini}, {\em Monotonicity, {B}ounds and {A}cceleration of {B}lock {G}auss and {G}auss--{R}adau {Q}uadrature for {C}omputing {$B^T \phi(A) B$}}, Journal of Scientific Computing, 103 (2025), p.~5.

\end{thebibliography}
\end{document}